\def\modif{}
\newtheorem{lemma}[subsection]{Lemma}
\newtheorem{proposition}[subsection]{Proposition}
\newtheorem{theorem}[subsection]{Theorem}
\newtheorem{corollary}[subsection]{Corollary}
\newtheorem{claim}[subsection]{Claim}
\newtheorem{fact}[subsection]{Fact}
\newenvironment{remark}%
   {\refstepcounter{subsection}%
        \medbreak\noindent{\bf Remark \thesubsection\space}}%
   {\par\medbreak}%
\newenvironment{example}%
   {\refstepcounter{subsection}%
        \medbreak\noindent{\bf Example \thesubsection\space}}%
   {\par\medbreak}%
\newenvironment{proof}[1][\unskip]%
   {\medbreak\noindent{\it Proof #1:\space}}%
   {\par\noindent\vrule height 5pt width 5pt depth 0pt\smallbreak}%
\newcommand{\df}{\bf}
\let\sauvegardetiret=\-
\renewcommand{\-}[1]{\ifx#1-\penalty10000\hbox{-\relax}\penalty10000\else\sauvegardetiret#1\fi}
\newcommand{\tq}{\,\big/\ }
\newcommand{\NN}{{\rm\bf N}}
\newcommand{\ZZ}{{\rm\bf Z}}
\newcommand{\QQ}{{\rm\bf Q}}
\newcommand{\RR}{{\rm\bf R}}
\newcommand{\cA}{{\cal A}}
\newcommand{\cB}{{\cal B}}
\newcommand{\cC}{{\cal C}}
\newcommand{\cD}{{\cal D}}
\newcommand{\cE}{{\cal E}}
\newcommand{\cL}{{\cal L}}
\newcommand{\cQ}{{\cal Q}}
\newcommand{\cS}{{\cal S}}
\newcommand{\cT}{{\cal T}}
\newcommand{\cU}{{\cal U}}
\newcommand{\cW}{{\cal W}}
\newcommand{\cZ}{{\cal Z}}
\renewcommand{\tq}{\mathop{:}}
\newcommand{\conj}{\bigwedge}
\newcommand{\cconj}{\mathop{\hbox{$\conj\mskip-11mu\relax\conj$}}\limits}
\newcommand{\Supp}{\mathop{\rm Supp}}
\newcommand{\Card}{\mathop{\rm Card}}
\title{Polytopes and simplexes in $p$-adic fields}
\author{Luck Darni\`ere\footnote{D\'epartement de math\'ematiques, Facult\'e
des sciences, 2 bd Lavoisier, 49045 Angers, France.}}
\begin{document}

\maketitle

\begin{abstract}
  We introduce topological notions of polytopes and simplexes, the
  latter being expected to fulfill in $p$\--adically closed fields the
  function of real simplexes in the classical results of triangulation
  of semi-algebraic sets over real closed fields. We prove that the
  faces of every $p$\--adic polytope are polytopes and that they form
  a rooted tree with respect to specialisation. Simplexes are then
  defined as polytopes whose faces tree is a chain. Our main result is
  a construction allowing to divide every $p$\--adic polytope in a
  complex of $p$\--adic simplexes with prescribed faces and shapes. 
\end{abstract}

\section{Introduction}
\label{se:intro}

Throughout all this paper we fix a $p$\--adically closed field
$(K,v)$. The reader unfamiliar with this notion may restrict to the
special case where $K=\QQ_p$ or a finite extension of it, and $v$ is its
$p$\--adic valuation. We let $R$ denote the valuation ring of $v$, and
$\Gamma=v(K)$ its valuation group (augmented with one element $+\infty=v(0)$).
In this introductory section we present informally what we are aiming
at. Precise definitions will be given in Section~\ref{se:notation}
and at the beginning of Section~\ref{se:p-adic}.

Our long-term objective is to set a triangulation theorem which would
be an acceptable analogue over $K$ of the classical triangulation of
semi-algebraic sets over the reals. Polytopes and simplexes in $\RR^m$
are well known to have the following properties, among others (see for
example \cite{boch-cost-roy-1998} or \cite{drie-1998}). 
\begin{description}
  \item[(Sim)]
    They are bounded subsets of $\RR^m$ which can be described by a
    finite set of linear inequalities of a very simple form. 
  \item[(Fac)]
    There is a notion of ``faces'' attached to them with good
    properties: every face of a polytope $S$ is itself a polytope; if
    $S''$ is a face of $S'$ and $S'$ a face of $S$ then $S''$ is a
    face of $S$; the union of the proper faces of $S$ is a
    partition of its frontier.
\item[(Div)]
    Last but not least, every polytope can be divided in simplexes by
    a certain uniform process of ``Barycentric Division'' which offers
    a good control both on their shapes and their faces. 
\end{description}

The goal of the present paper is to build a $p$\--adic counterpart of
real polytopes and simplexes having similar properties. Obviously
there is no direct translation of concepts like linear {\em
inequalities} and {\em Barycentric} Division to {\em non-ordered}
fields, such as the $p$\--adic ones. Nevertheless we want our
$p$\--adic polytopes and simplexes to be defined by conditions which
are as simple as possible, to obtain a notion of faces satisfying all
the above properties, and most of all to develop a flexible and
powerful division tool.

This is achieved here by first introducing and studying certain
subsets of $\Gamma^m$ called ``largely continuous precells mod $N$'', for a
fixed $m$\--tuple $N$ of positive integers. These sets will
be defined by a very special triangular system of linear inequalities
and congruence relations mod $N$. In particular they are defined
simply by linear inequalities in the special case where $N=(1,\dots,1)$
(again, see Section~\ref{se:notation} for precise definitions and
basic examples). 

\medskip

This paper, which is essentially self-contained, is organised as
follows. The general properties of subsets of $\Gamma^m$ defined by
conjunctions of linear inequalities and congruence conditions are
studied in section~\ref{se:face-proj}. Property (Fac) is proved there
to hold true for largely continuous precells mod $N$ (while
property (Sim) is a by-product of their definition).
Section~\ref{se:bound-fun} is devoted to two technical properties
preparing the proof of our main result, a construction analogous to
(Div) in our context. We call this ``Monohedral Division'' (see
below). Section~\ref{se:division} is devoted to its proof.

We then return to the $p$\--adic context in the final
section~\ref{se:p-adic}. By taking inverse images of largely
continuous precells by the valuation $v$ (which maps $K^m$ onto $\Gamma^m$)
and restricting them to certain subsets of $R^m$, we transfer all the
definitions and results built in $\Gamma^m$ in the previous sections to
$K^m$, especially the Monohedral Division (which becomes in this
context the ``Monotopic Division'', Theorem~\ref{th:div-p-adic}). This
latter result paves the way towards a triangulation of semi-algebraic
$p$\--adic sets, to appear in a further paper. 

\paragraph{Monohedral division.}

In addition to (Sim) and (fac), every largely continuous precell $A$ mod
$N$ has one more remarkable property which real polytopes are
lacking: its proper faces, ordered by specialisation\footnote{The {\df
specialisation pre-order} of the subsets of a topological space is
defined as usually by $B\leq A$ if and only if $B$ is contained in the
closure or $A$.\label{fo:spec-ord}}\!, form a rooted tree
(Proposition~\ref{pr:face-egale-proj}(\ref{it:face-lattice}). When
this tree is a chain, we say that $A$ is ``monohedral''.

Among real polytopes of a given dimension, the simplexes are those
whose number of facets is minimal: a polytope $A\subseteq\RR^m$ of dimension $d$
has at least $d+1$ facets, and it is a simplex if and only if it has
exactly $d+1$ facets (see Corollary~9.5 and Corollary 12.8 in
\cite{bron-1983}). We expect largely continuous precells to fulfill in
$\Gamma^m$ a function similar to polytopes in $\RR^m$, and the monohedral
ones (whose ordered set of faces is in a sense the simplest possible
tree) to fulfill a function similar to simplexes.

Indeed our main result, the ``Monohedral Division''
(Theorem~\ref{th:mono-div}), provides in our context a powerful tool
very similar to (Div), the Barycentric Division of real polytopes. It
provides in particular a ``Monohedral Decomposition''
(Theorem~\ref{th:mono-dec}) which says that every largely continuous
precell mod $N$ in $\Gamma^m$ is the disjoint union of a complex of {\em
monohedral} largely continuous precells mod $N$. The latter result is in
analogy with the situation in the real case, where every polytope
can be divided in simplexes forming a simplicial complex. 

But the Barycentric Division in $\RR^m$ says much more than this. Roughly
speaking, given a polytope $A$ and a simplicial complex $\cD$
partitioning the frontier of $A$, it makes it possible to build a simplicial
complex $\cC$ by partitioning $A$ and ``lifting'' $\cD$, in the sense
that for every $C$ in $\cC$, the faces $D$ of $C$ which are outside
$A$ belong to $\cD$. Moreover, given a positive function $\varepsilon:B
\to\RR$ (where $B$ is any proper face of $A$), the shapes of the elements
of $\cC$ can be required to satisfy the following condition: for every
$D$ in $\cD$ there is a unique $C\in\cC$ such that $D$ is the largest
proper face of $C$ in $\cC$, and in that case the distance of any point
$x\in C$ to its projection $y$ onto $B$ is smaller than $\varepsilon(y)$ (see
Figure~\ref{fi:div}, where the dotted curve shows how the $\varepsilon$ function
controls the shapes of the elements of $\cC$ whose largest proper face
outside $A$ is contained in the lower facet $B$). 

\begin{figure}[h]
\begin{center}

  \begin{tikzpicture}[scale=0.5]
    \coordinate (A) at (1,0);
    \coordinate (B) at (3,0);
    \coordinate (C) at (5,0);
    \coordinate (D) at (6,0);
    \coordinate (E) at (7,1);
    \coordinate (F) at (6.5,2.5);
    \coordinate (G) at (6,4);
    \coordinate (H) at (3,4.5);
    \coordinate (I) at (0,3);
    \coordinate (J) at (0.5,1.5);
    \coordinate (AB) at (2,0.5);
    \coordinate (CD) at (4.5,1);
    \coordinate (GH) at (4,3);
    \coordinate (IJ) at (3,2);
    \coordinate (X) at (3.75,.5);
    \filldraw[fill=gray!30] (A) -- (D) -- (CD) -- (X) -- (AB) --cycle;
    \draw (A) -- (D) -- (E) -- (G) -- (H) -- (I) -- (A);
    \draw (A) -- (AB) -- (X) -- (CD) -- (F) -- (GH) -- (H) -- (IJ)
       (X) -- (IJ) -- (J) -- (AB) 
       (CD) -- (IJ) -- (GH) -- (CD) -- (C)
       (CD) -- (E) 
       (GH) -- (G)
       (CD) -- (B) -- (AB) -- (IJ) -- (I);
    \draw[dotted] (1,.5) .. controls +(1,.5) and +(-1,0) .. (3.2,.7)
                  .. controls +(1,0) and +(-1.2,2.4) .. (D);
    \foreach \p in {A,B,C,D,E,F,G,H,I,J}
      \draw (\p) node {\tiny$\bullet$};
  \end{tikzpicture}

\caption{Division with constraints along a facet.}
\label{fi:div}
\end{center}
\end{figure}
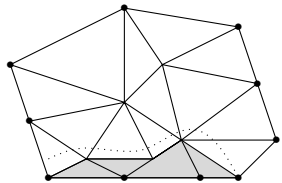

Although all these properties can be derived from the Barycentric Division in
$\RR^m$, none of them involves the notion of barycenter. The strength of
our Monohedral Division in $\Gamma^m$ (Theorem~\ref{th:mono-div}), and
eventually of our Monotopic Division in $K^m$
(Theorem~\ref{th:div-p-adic}), is that they preserve\footnote{In
  addition, the Monohedral and Monotopic Divisions even ensure that
  every $C$ in $\cC$ has no proper face inside $A$: either $C$ has no
  proper face at all, or its unique facet is outside $A$ (hence so are
all its proper faces) and belongs to $\cD$.} all of these properties.

\section{Notation, definitions}

\label{se:notation}

We let $\NN$, $\ZZ$, $\QQ$ denote respectively the set of non-negative integers,
of integers and of rational numbers, and $\NN^*=\NN\setminus\{0\}$. {\modif For every
$p,q\in\ZZ$ we let $[\![p,q]\!]$ denote the set of integers $k$ such that
$p\leq k\leq q$ (that is the empty set if $p>q$).}
\medskip

Recall that a {\df $\ZZ$\--group} is a linearly ordered group $G$ with a
smallest positive element such that $G/nG$ has $n$ elements for every
integer $n\geq1$. The reader unfamiliar with $\ZZ$\--groups may restrict to
the special but prominent case of $\ZZ$ itself. Indeed a linearly
ordered group is a $\ZZ$\--group if and only if it is elementarily
equivalent to $\ZZ$ (in the Presburger language $\cL_{Pres}$ defined below). 

$(K,v)$ is a $p$\--adically closed field in the sense of
\cite{pres-roqu-1984}, that is a Henselian valued field of
characteristic zero whose residue field is finite and whose value group
$\cZ=\Gamma\setminus\{+\infty\}=v(K^*)$ is a $\ZZ$\--group. A field is $p$\--adically closed
if and only if it is elementarily equivalent (in the language of
rings) to a finite extension of $\QQ_p$, so the reader unfamiliar with
the formalism of model-theory may restrict to this fundamental case. 

Let $\cQ$ be the divisible hull of $\cZ$. By identifying $\ZZ$ with the
smallest non-trivial convex subgroup of $\cZ$, we consider $\ZZ$
embedded into $\cZ$ (and $\QQ$ into $\cQ$). For every $a\in\cQ$
we let $|a|=\max(-a,a)$. 

{\modif 
  \begin{remark}\label{re:def-naive-1}
    When $\cZ=\ZZ$, one may naively define polytopes in $\ZZ^m$ as
    intersections $S\cap \ZZ^m$ where $S\subseteq\RR^m$ is a polytope of $\RR^m$. With
    other words, a polytope in $\ZZ^m$ (and more generally in $\cZ^m$)
    would be an intersection of finitely many half-spaces, that is the
    set of solutions of finitely many linear inequalities. Our
    polytopes are indeed so, but it will soon become clear that we need to
    be much more restrictive. Note first that such a definition does
    not lead naturally to a good notion of faces, because $S\cap\ZZ^m$ is
    closed with respect to the topology inherited from $\RR^m$. In
    particular if $S'$ is a face of $S$, $S'\cap\ZZ^m$ is disjoint from the
    closure of $S\cap\ZZ^m$. In order to carry significant topological
    properties, a notion of face for subsets of $\ZZ^m$ must involve
    points at infinity. 
  \end{remark}
}

For every $a$ in $\Omega=\cQ\cup\{+\infty\}$ we let $a+(+\infty)=(+\infty)+a=+\infty$. $\Omega$ is
endowed the topology generated by the open intervals and the intervals
$]a,+\infty]$ for $a\in\cQ$. $\Omega^m$ is equipped with the product topology, and
$\Gamma^m$ with the induced topology. The topological closure of any set
$A$ in $\Omega^m$ is denoted $\overline{A}$. Thus for example
$\Omega=\overline{\cQ}$ and $\Gamma=\overline{\cZ}$. The {\df frontier} of a
subset $A$ of $\Omega^m$ is the closure of $\overline{A}\setminus A$. We denote it
$\partial A$. 
\medskip

Whenever we take an element $a\in\Omega^m$ it is understood that
$a_1,\dots,a_m\in\Omega$ are its coordinates. We say that $a$ is {\df
non-negative} if all its coordinates are. A subset $A$ of $\Omega^m$ is
{\df non-negative} if all its elements are. {\modif A function $f$ with values
in $\Omega$ is {\df non-negative} (resp. {\df positive}) on a subset $X$ of
its domain if $f(x)$ is non-negative (resp. positive) for every $x\in
X$. }

If $m\geq1$ we let $\widehat{a}$ (resp. $\widehat{A}$)
denote the image of $a$ (resp. $A$) under the coordinate projection of
$\Omega^m$ onto the first $m-1$ coordinates $\Omega^{m-1}$. We call it the {\df
socle} of $a$ (resp. $A$). If $\cA$ is a family of subsets of $\Gamma^m$ we
also call $\widehat{\cA}=\{\widehat{A}\tq A\in\cA\}$ the socle of $\cA$. 

The {\df support of $a$}, denoted $\Supp a$, is the set of indexes $i$
such that $a_i\neq+\infty$. When all the elements of $A$ have the same
support, we call it the {\df support of $A$} and denote it $\Supp A$.
For every subset $I=\{i_1,\dots,i_r\}$ of $[\![1,m]\!]$ we let:
\begin{displaymath}
  F_I(A)=F_{i_1,\dots,i_r}(A)=\{a \in \overline{A}\tq \Supp a = I\}.
\end{displaymath}
When $F_I(A)\neq\emptyset$ we call it the {\df face of $A$ of support $I$}. It is
an {\df upward face} if moreover $m\in I$ and $F_{I\setminus\{m\}}(A)$ is
non-empty. Note that if $A$ is contained in $\Gamma^m$ then
so are its faces, because $\Gamma^m$ is closed in $\Omega^m$. By construction,
$F_I(A)=\overline{A}\cap F_I(\Omega^m)$ hence $\overline{A}$ is the disjoint
union of all its faces. A {\df complex} in $\Gamma^m$ is a finite family
$\cA$ of pairwise disjoint subsets of $\Gamma^m$ such that for every $A,B\in\cA$,
$\overline{A}\cap\overline{B}$ is the union of the {\em common} faces of $A$
and $B$. It is a {\df closed complex} if moreover it contains all the
faces of its members, or equivalently if $\bigcup\cA$ is closed. Note that
a finite partition $\cS$ of a subset $A$ of $\Gamma^m$ is a closed complex if
and only if $\cS$ contains the faces of all its members. 

The specialisation pre-order (see footnote~\ref{fo:spec-ord}) is an
order on the faces of $A$. The largest proper faces of $A$ with
respect to this order are called its {\df facets}. We say that $A$ is
{\df monohedral} if its faces are linearly ordered by specialisation.
Note that every subset of $F_I(\Gamma^m)$ is clopen in $F_I(\Gamma^m)$. In
particular if $A\subseteq F_I(\Gamma^m)$ then $\partial A$ is the disjoint union of its
proper faces. Note also that $F_J(A)=\emptyset$ whenever $J\nsubseteq I$.

\begin{example}\label{ex:face-non-cvx}
  Let $A\subseteq\ZZ^3$ be defined by $a_1\geq 0$, $a_2\geq a_1$ and $a_3=2a_2-2a_1$.
  It has four non-empty faces: $A$ itself, two facets
  $F_1(A)=\NN\times\{+\infty\}\times\{+\infty\}$ and $F_3(A)=\{+\infty\}\times\{+\infty\}\times2\NN$, plus
  $F_\emptyset(A)=\{(+\infty,+\infty,+\infty)\}$. 
\end{example}

We let $\pi^m_I$ be the natural projection of $\Gamma^m$ onto $F_I(\Gamma^m)$.
When $m$ is clear from the context, $\pi_I^m$ is simply 
denoted $\pi_I$. 

\begin{remark}\label{re:face-in-proj}
  For every $A\subseteq \Gamma^m$ note that $F_J(A)\subseteq\pi_J(A)$ and $\widehat{F_J(A)}\subseteq
  F_{\widehat{J}}(\widehat{A})$ (where $\widehat{J}=J\setminus\{m\}$). Indeed
  for every $b\in F_J(B)$ there are points in $A$ arbitrarily close to
  $b$. In particular there is a point $a\in A$ such that $\max_{i\in
  I}|a_i-b_i|<1$, which implies that $\pi_J(a)=b$. This proves the first
  inclusion, and we leave the second one to the reader. 
\end{remark}

For every $J\subseteq[\![1,m]\!]$ and $a\in\Omega^m$ we let $\Delta_J^m(a)=\min\{a_i\tq i\notin
J\}$ (if $J=[\![1,m]\!]$ we use the convention that
$\Delta^m_J(a)=\min\emptyset=+\infty$). Again the superscript $m$ is omitted whenever it
is clear from the context. Note that for every $a,b\in\Omega^m$
\begin{displaymath}
  \Delta_J(a+b) \geq \Delta_J(a) + \Delta_J(b).
\end{displaymath}

\begin{remark}\label{re:delta-dist}
  When $\cZ=\ZZ$ the topology\footnote{\modif The restriction of our topology
    to $\QQ^m$ agrees with the usual one, induced by the Euclidian
  distance.} on $\Omega^m$ comes from the distance $d(a,b)=\max_{1\leq i\leq m}
  |2^{-a_i}-2^{-b_i}|$, with the convention that $2^{-\infty}=0$. Thus
  $2^{-\Delta_J(a)}$ is just the distance from $a$ to its projection
  $\pi_J(a)$. In the general case the topology on $\Omega^m$ no longer comes
  from a distance. Nevertheless we will keep this geometric intuition in
  mind, that $\Delta_J(a)$ measures something like a distance from $a$ to
  $F_J(\Omega^m)$: the bigger $\Delta_J(a)$ is, the closer $a$ is to $F_J(\Omega^m)$.
\end{remark}

This intuitive meaning makes the following facts rather obvious. 

\begin{fact}\label{fa:delta-dist}
  For every function $f:A\subseteq \Gamma^m\to\Omega$, given $b\in\Gamma^m$ such that $\Supp b =J$  we
  have: 
  \begin{enumerate}
    \item
      $b\in F_J(A)$ iff $b\in\overline{A}$ iff 
      $\forall\delta\in\cZ$, $\exists a\in A$, $\pi_J(a)=b$ and $\Delta_J(a)\geq\delta$.
    \item 
      If $b\in F_J(A)$ then $f$ has limit $+\infty$ at $b$ iff $\forall\varepsilon\in\cZ$,
      $\exists\delta\in\cZ$, $\forall a\in A$, $[\pi_J(a)=b$ and $\Delta_J(a)\geq\delta] \Rightarrow f(a)\geq\varepsilon$. 
  \end{enumerate}
\end{fact}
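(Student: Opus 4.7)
The plan is to reduce both assertions to the following basic observation: near a point $b \in \Gamma^m$ with $\Supp b = J$, the topology of $\Gamma^m$ admits an explicit neighbourhood basis indexed by $\delta \in \cZ$, of precisely the shape that appears in the statement. Once such a basis is in hand, (1) becomes a direct unravelling of the definitions of $\overline{A}$ and $F_J(A)$, and (2) a direct unravelling of the definition of limit.

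First I would exploit the fact that $\cZ$ is a $\ZZ$-group: it has a smallest positive element $1_\cZ$, so any open interval of $\cQ$ of length at most $1_\cZ$ meets $\cZ$ in at most one point; in other words, $\cZ$ is discrete in $\cQ$. For $i \in J$ we have $b_i \in \cZ$, so every small enough neighbourhood of $b_i$ in $\Omega$ meets $\Gamma$ exactly in $\{b_i\}$; for $i \notin J$ we have $b_i = +\infty$, whose neighbourhoods in $\Omega$ form the family $]\delta,+\infty]$, $\delta \in \cZ$. Taking the product and recalling that $\Delta_J(a) = \min_{i \notin J} a_i$, this shows that the sets
\begin{displaymath}
  U_\delta = \bigl\{ x \in \Gamma^m : \pi_J(x) = b \text{ and } \Delta_J(x) > \delta \bigr\}, \quad \delta \in \cZ,
\end{displaymath}
form a neighbourhood basis of $b$ in $\Gamma^m$. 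Note that the strict inequality $>$ and the non-strict $\geq$ in the definition of $U_\delta$ are interchangeable by shifting $\delta$ by $\pm 1_\cZ$, so passing between them will be harmless throughout.

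For (1), the equivalence $b \in F_J(A) \Leftrightarrow b \in \overline{A}$ is immediate from the definition $F_J(A) = \{a \in \overline{A} : \Supp a = J\}$ together with the hypothesis $\Supp b = J$. The equivalence with the last condition then amounts to: $b \in \overline{A}$ iff $U_\delta \cap A \neq \emptyset$ for every $\delta \in \cZ$, which is exactly the definition of closure once the $U_\delta$ are known to form a neighbourhood basis of $b$.

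For (2), I would simply unfold: $f$ has limit $+\infty$ at $b$ iff for every neighbourhood $V$ of $+\infty$ in $\Omega$ there exists a neighbourhood $U$ of $b$ in $\Gamma^m$ with $f(A \cap U) \subseteq V$. Plugging in the explicit bases $\{\,]\varepsilon,+\infty] : \varepsilon \in \cZ\,\}$ at $+\infty$ and $\{U_\delta : \delta \in \cZ\}$ at $b$ rewrites this condition verbatim into the statement. No real obstacle is expected here: everything reduces to the discreteness of $\cZ$ in $\cQ$ (an immediate consequence of the $\ZZ$-group axioms) and a mechanical translation of the definitions of closure and of limit.
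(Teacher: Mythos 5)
Your proof is correct. The paper offers no argument for this Fact, simply declaring it ``rather obvious'' after Remark~\ref{re:delta-dist}, and your verification --- establishing that the sets $U_\delta$ form a neighbourhood basis of $b$ in $\Gamma^m$ by discreteness of $\cZ$ and cofinality of $\cZ$ in $\cQ$, then unwinding the definitions of closure and limit --- is exactly the routine check the paper implicitly asks the reader to supply.
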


Given a vector $u\in\cZ^m$ we let $A+u=\{x+u\tq x\in A\}$. We say
that $u$ is {\df pointing} to some $J\subseteq[\![1,m]\!]$ if $u_i=0$ for $i\in
J$ and $u_i>0$ for $i\notin J$. 

\begin{remark}\label{re:half-line}
  Let $J\subseteq I\subseteq[\![1,m]\!]$ and $S$ be any subset of $F_I(\Gamma^m)$. Using
  Remark~\ref{re:face-in-proj} and the above facts, one easily sees that
  if for every $\delta\in\cZ$ there is $u\in\cZ^m$ pointing to $J$ such that
  $\Delta_J(u)\geq\delta$ and $S+u\subseteq S$ then $F_J(S)=\pi_J(S)$, and in particular
  $F_J(S)\neq\emptyset$.
\end{remark}

{\modif
Example~\ref{ex:face-non-cvx} shows that even if a subset $A$ of $\ZZ^m$
is defined by finitely many linear inequalities, its faces may not be
so. Thus a polytope $A$ in $\Gamma^m$ must satisfy additional conditions,
in order to ensure that some linear inequalities which define $A$ also
define its faces after passing to the limits (see
Proposition~\ref{pr:pres-face} for a precise statement). It is 
these conditions that we are going to introduce now. }

A function $f:A\subseteq \Gamma^m\to\Omega$ is {\df largely continuous} on
$A$ if it can be extended to a continuous function on $\overline{A}$,
which we will usually denote $\bar f$. If $A$ has support $I$, we say
that $f$ is an {\df affine map} (resp. a {\df linear map}) if either $f$
is constantly equal to $+\infty$, or for some $\alpha_0\in\cQ$ (resp. $\alpha_0=0$) and
some $(\alpha_i)_{i\in I}\in\QQ^I$, we have
\begin{equation}\label{eq:def-lin}
  \forall a\in A,\quad f(a)=\alpha_0 + \sum_{i\in I}\alpha_ia_i.
\end{equation}
We call $\alpha_0$ the ``constant coefficient'' in the above expression of
$f$. If such an expression exists for which $\alpha_0\in\cZ$
and $\alpha_i\in \ZZ$ for $i\in I$, we say that $f$ is {\df integrally affine}. A
affine map which takes values in $\Gamma$ will be called {\df
$\Gamma$\--affine}. For example $f(x)=x/2$ is $\Gamma$\--affine on $2\ZZ$ but is
not integrally affine. 

\begin{remark}\label{re:linearity}
  Affinity and linearity are intrinsic properties because a function
  $\varphi:A\subseteq F_I(\Gamma^m)\mapsto\cQ$ is a linear map if and only if for every
  $a_1,\dots,a_k\in A$ and every $\lambda_1,\dots,\lambda_k\in\QQ^m$~:
  \begin{displaymath}
    \sum_{1\leq i\leq k}\lambda_ia_i\in A \ \Longrightarrow \ \varphi\bigg(\sum_{1\leq i\leq k}\lambda_ia_i\bigg)=\sum_{1\leq
    i\leq k}\lambda_i\varphi(a_i) 
  \end{displaymath}
\end{remark}

The symbols of the Presburger language
$\cL_{Pres}=\{0,1,+,\leq,(\equiv_n)_{n\in\NN^*}\}$ are interpreted as usually in
$\cZ$: the binary relation $a\equiv_nb$ says that $a-b\in n\cZ$, and the
other symbols have their obvious meanings. A subset $X$ of $\cZ^d$ is
{\df $\cL_{Pres}$\--definable} if there is a first order formula
$\varphi(\xi)$ in $\cL_{Pres}$, with parameters in $\cZ$ and a $d$\--tuple $\xi$
of free variables, such that $X=\{x\in\cZ^d\tq\cZ\models\varphi(x)\}$. A function
$f:X\subseteq\cZ^d\to\cZ$ is {\df $\cL_{Pres}$\--definable} if its graph is. 

Each $F_I(\Gamma^m)$ can be identified with $\cZ^d$ with $d=\Card(I)$. We say that a
subset $A$ of $\Gamma^m$ is {\df definable} if for every $I\subseteq[\![1,m]\!]$
the set $A\cap F_I(\Gamma^m)$ is $\cL_{Pres}$\--definable by means of this
identification. We say that a function $f:A\subseteq \Gamma^m\to\Omega$ is {\df definable}
if there is an integer $N\geq1$ such that $Nf(X)\subseteq\Gamma$ and if the
restrictions of $Nf$ to each $F_I(\Gamma^m)$ become, after this
identification, either an $\cL_{Pres}$\--definable map from
$\cZ^{\Card(I)}$ to $\cZ$ or the constant map $+\infty$. Note that every
affine map is definable in this broader sense. 

The next characterisation of definable maps and sets comes
directly from Theorem~1 in \cite{cluc-2003}. 

\begin{theorem}[Cluckers]\label{th:cluck-piece-lin} 
  For every definable function $f:A\subseteq \Gamma^m\to\Gamma$ on a non-negative set $A$,
  there exists a partition of $A$ in finitely many
  definable sets, on each of which the restriction of $f$ is
  an affine map. 
\end{theorem}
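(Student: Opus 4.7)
The plan is to reduce the statement directly to Cluckers's piecewise linear theorem from \cite{cluc-2003} by decomposing $A$ along its face stratification and rescaling by an integer $N$.

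First, I would write $A$ as the disjoint union $A = \bigsqcup_{I \subseteq [\![1,m]\!]} A_I$ with $A_I = A \cap F_I(\Gamma^m)$. Since $A$ is definable, each $A_I$ corresponds under the natural identification $F_I(\Gamma^m) \cong \cZ^{\Card(I)}$ to an $\cL_{Pres}$\--definable set, and non-negativity of $A$ places it in the non-negative cone of $\cZ^{\Card(I)}$. It is therefore enough to partition each $A_I$ separately. By the paper's definition of definable function, there is an integer $N \geq 1$ such that, on each $A_I$, the function $Nf$ is either constantly $+\infty$ (in which case $A_I$ itself is already an affine piece under the convention that the constant $+\infty$ map is affine) or becomes, after identification, an $\cL_{Pres}$\--definable map $A_I \to \cZ$.

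In the second case I apply Cluckers's Theorem~1 to $Nf|_{A_I}$: it returns a finite $\cL_{Pres}$\--definable partition of $A_I$ on each piece of which $Nf$ is given by an integer-linear expression $a_0 + \sum_{i \in I} a_i x_i$ with $a_0 \in \cZ$ and $a_i \in \ZZ$. Dividing by $N$ rewrites $f$ on the same piece as $(a_0/N) + \sum_{i \in I}(a_i/N)\,x_i$, which fits the paper's definition of an affine map, with constant coefficient in $\cQ$ and linear coefficients in $\QQ$. Each such piece, being $\cL_{Pres}$\--definable in $\cZ^{\Card(I)}$ and contained in the single stratum $F_I(\Gamma^m)$, is automatically definable in $\Gamma^m$ in the sense of the paper, since its intersection with any other $F_J(\Gamma^m)$ is empty. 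Collecting the pieces obtained for every $I$ yields the required partition of $A$.

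The only real obstacle is checking that Cluckers's theorem indeed outputs pieces of affinity of the linear form above (rational coefficients, no floor symbols or other extra ingredients), which is exactly what Theorem~1 of \cite{cluc-2003} provides; so the entire argument amounts to this reference plus the face-by-face bookkeeping and the harmless rescaling by $N$.
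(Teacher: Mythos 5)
Your proof is correct and follows essentially the same route as the paper, which simply attributes the result to Theorem~1 of Cluckers (2003) without elaboration; you have just spelled out the harmless face-stratum decomposition and rescaling by $N$ that the citation takes for granted. One small inaccuracy: on each cell Cluckers's theorem gives $Nf$ in the form $a_0+\sum_i a_i(x_i-c_i)/e_i$ with integers $a_i$ but cells carrying congruences $x_i\equiv c_i\;[e_i]$, so the coefficient of $x_i$ is rational rather than the integer-linear form $a_0+\sum a_i x_i$ you claim; since the paper's definition of affine map allows rational coefficients, this does not affect the argument.
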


It is well known that the theory of $\ZZ$\--groups has quantifier
elimination and definable Skolem functions. At many places, without
mentioning, we will use the latter property under the following form.

\begin{theorem}[Skolem Functions]\label{th:skolem}
  Let $A\subseteq\cZ^m$ and $B\subseteq\cZ^n$ be two $\cL_{Pres}$\--definable sets.
  Let $\varphi(x,y)$ be a first order formula in $\cL_{Pres}$. If for every
  $a\in A$ there is $b\in B$ such that $\cZ\models\varphi(a,b)$ then there is a
  definable map $\lambda:A\to B$ such that $\cZ\models\varphi(a,\lambda(a))$ for every $a\in
  A$. 
\end{theorem}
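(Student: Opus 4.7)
The plan is to induct on $n$ and reduce to the single-variable case $n=1$, whose engine is the following structural fact about Presburger arithmetic: every nonempty $\cL_{Pres}$\--definable subset of $\cZ$ which is bounded below admits a minimum, and symmetrically, one bounded above admits a maximum. This is a direct consequence of Presburger quantifier elimination, which expresses every definable subset of $\cZ$ as a finite Boolean combination of inequalities and congruences, hence as a finite union of arithmetic progressions supported on intervals; each such piece contributes a minimum when bounded below and a maximum when bounded above.

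For the base case $n=1$, I would consider the definable set $S=\{(a,b)\in A\times B : \cZ\models\varphi(a,b)\}\subseteq\cZ^{m+1}$, whose fibres $S_a$ are all nonempty by assumption, and define
\[
  \lambda(a) = \begin{cases} \min\{b\in S_a : b\geq 0\} & \text{if } \{b\in S_a : b\geq 0\}\neq\emptyset, \\ \max S_a & \text{otherwise}. \end{cases}
\]
In the first case $0$ is a lower bound, so the minimum exists; in the second case $S_a\subseteq\{b<0\}$, so $0$ is an upper bound and the maximum exists. The graph of $\lambda$ is $\cL_{Pres}$\--definable via the usual quantifier-free characterisations (``$b$ lies in the relevant set and no strictly smaller, resp.\ larger, element does''). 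Hence $\lambda:A\to B$ is definable and satisfies $\varphi(a,\lambda(a))$ for every $a\in A$.

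For the inductive step $n>1$, I would first apply the $n=1$ case to the formula
\[
  \psi(x,y_1)\ \equiv\ \exists y_2,\dots,y_n\in\cZ,\ (y_1,\dots,y_n)\in B\ \wedge\ \varphi(x,y_1,\dots,y_n),
\]
obtaining a definable $\lambda_1:A\to\cZ$ with $\psi(a,\lambda_1(a))$ for every $a\in A$. Then the induction hypothesis, applied to the $(n-1)$\--variable formula $\varphi(x,\lambda_1(x),y_2,\dots,y_n)\wedge(\lambda_1(x),y_2,\dots,y_n)\in B$, yields a definable tuple $(\lambda_2,\dots,\lambda_n)$; setting $\lambda=(\lambda_1,\dots,\lambda_n)$ completes the construction. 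The only substantive obstacle is the base case, and it reduces entirely to the min/max property of definable subsets of $\cZ$ recalled above; the inductive assembly is routine.
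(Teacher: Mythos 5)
The paper does not actually prove this theorem: it simply records the existence of definable Skolem functions for $\ZZ$\--groups as a well-known consequence of quantifier elimination and uses it silently thereafter. So there is no "paper's own proof" against which to compare your argument; what you have supplied is the standard proof, and it is correct. Your two ingredients are exactly right: the structural fact that a nonempty $\cL_{Pres}$\--definable subset of $\cZ$ bounded below (resp.\ above) has a minimum (resp.\ maximum) — which the paper itself states and uses immediately after Theorem~\ref{th:skolem}, noting it transfers from $\ZZ$ by elementary equivalence — and the reduction to one output variable at a time by induction on $n$. Your choice function (least nonnegative element of the fibre if one exists, else the greatest element) is a perfectly good definable selector, and its graph is first-order definable with parameters in $\cZ$. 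In the inductive step one should just be explicit that $\lambda_1$ enters the new formula via its definable graph (i.e.\ one quantifies an auxiliary variable $y_1$ and conjoins "$\,y_1=\lambda_1(x)\,$" as a first-order condition), and that the target set for the recursive call can be taken to be $\cZ^{n-1}$ since membership of $(\lambda_1(x),y_2,\dots,y_n)$ in $B$ is already built into the formula; but these are routine bookkeeping points. The proof is complete and sound.
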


Since $\cZ$ is elementarily equivalent to $\ZZ$ in the language
$\cL_{Pres}$, every non-empty $\cL_{Pres}$\--definable subset of $\cZ$
which is bounded above (resp. below) has a maximum (resp. minimum)
element. As a consequence for every $a\in\Omega$ there is in $\cZ$ a largest
element $\lfloor a\rfloor$ (resp. $\lceil a\rceil$) which is $\leq a$ (resp. $\geq a$). Note that
if $f:X\subseteq\cZ^d\to\cQ$ is definable and $N\geq1$ is an integer such that
$Nf$ is $\cL_{Pres}$\--definable, then for every integer $0\leq k<N$ the
set $S_k=\{x\in X\tq Nf(x)\equiv_N k\}$ is $\cL_{Pres}$\--definable, and so is
the map $\lfloor f\rfloor(x)=(Nf(x)-k)/N$ on $S_k$. Thus the map $\lfloor f\rfloor:S\to\cZ$ is
$\cL_{Pres}$\--definable, and so is $\lceil f\rceil$ by a symmetric argument.
Obviously the same holds true for every definable map from $A\subseteq\Gamma^m$
to $\Omega$. 

\begin{lemma}\label{le:precomp-borne}
  If $f:A\subseteq\Gamma^m\to\Omega$ is a largely continuous definable map on
  a non-negative set $A$, then it has a minimum in $A$. 
\end{lemma}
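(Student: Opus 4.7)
The plan is to first establish that $f$ is bounded below on $A$ and then invoke the Presburger principle that any non-empty bounded-below definable subset of $\cZ$ admits a minimum. Once boundedness below is known, the set $Nf(A)\cap\cZ$ (with $N$ such that $Nf$ is $\Gamma$-valued) is either empty, in which case $f\equiv+\infty$ and any point of $A$ qualifies as a minimum, or a definable subset of $\cZ$ bounded below whose minimum $\mu$ has a preimage in $A$ at which $f$ attains its global minimum $\mu/N$.

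To prove boundedness below, I would apply Cluckers' Theorem~\ref{th:cluck-piece-lin} to $Nf$, partitioning $A$ into finitely many definable pieces on each of which $f$ is an affine map. Because an affine expression involves a fixed index set of coordinates, each piece has a uniform support $I\subseteq[\![1,m]\!]$ and identifies via the coordinate-dropping map with a non-negative subset $P$ of $\cZ^{|I|}$ on which $f(a)=\alpha_0+\sum_{i\in I}\alpha_ia_i$ with $\alpha_0\in\cQ$ and $\alpha_i\in\QQ$. It is enough to bound $f$ below on each such $P$.

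Suppose, for contradiction, that $f$ is unbounded below on some piece $P$. By the Skolem principle (Theorem~\ref{th:skolem}), there is a definable map $\lambda:\cZ_{\geq 0}\to P$ satisfying $f(\lambda(n))\leq -n$. Each coordinate of $\lambda$ being $\cL_{Pres}$-definable, and hence piecewise affine in $n$, I would restrict to a cofinal arithmetic progression in $\cZ_{\geq 0}$ and reparametrize so as to assume $\lambda(n)=c+nd$ for some $c,d\in\cZ^{|I|}$. Non-negativity of $P$, together with $n$ ranging arbitrarily large, forces $d\geq 0$ coordinatewise, while $f(\lambda(n))=(\alpha_0+\sum_i\alpha_ic_i)+n\cdot(\sum_i\alpha_id_i)\to -\infty$ forces the slope $\sum_i\alpha_id_i$ to be negative, hence $d\neq 0$.

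The conclusion comes from large continuity. The sequence $\lambda(n)=c+nd$ approaches in $\Gamma^m$ the point $b$ with $b_i=c_i$ when $d_i=0$ and $b_i=+\infty$ when $d_i>0$; using Fact~\ref{fa:delta-dist}(1) with $J=\{i:d_i=0\}$, one checks that $\pi_J(\lambda(n))=b$ and $\Delta_J(\lambda(n))\to+\infty$, so that $b\in\overline{P}\subseteq\overline{A}$. Continuity of the extension $\bar{f}:\overline{A}\to\Omega$ would then yield $\bar{f}(b)=-\infty$, contradicting the fact that $\Omega=\cQ\cup\{+\infty\}$ has no $-\infty$. The delicate point of the proof, I expect, will be extracting the affine parametrization $\lambda(n)=c+nd$ cleanly from the Presburger-definable map provided by Skolem; once this direction of recession is in hand, everything else is largely routine.
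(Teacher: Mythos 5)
Your proof is correct and takes a genuinely different route from the paper's. The paper reduces to the case $\cZ=\ZZ$ by elementary equivalence (instantiating the parameters of a definition of $f$) and then invokes compactness of $\overline{A}$ together with the classical Extreme Value Theorem: the continuous extension $\bar f$ attains a minimum on the compact set $\overline{A}$ at some $\bar a$, and since $f$ takes values in $\ZZ$, points of $A$ close enough to $\bar a$ realize $f(a)=\bar f(\bar a)$. You instead stay entirely inside the general $\ZZ$\--group $\cZ$: you use Cluckers' piecewise-affine decomposition (Theorem~\ref{th:cluck-piece-lin}) to reduce to affine pieces, then a recession-direction argument (via Skolem functions and Presburger quantifier elimination) to show that if $f$ were unbounded below on a piece $P$, a half-line $k\mapsto c+kd$ inside $P$ would converge in $\Gamma^m$ to a boundary point $b\in\overline{A}$ at which $\bar f$ would have to be $-\infty$, contradicting $\bar f$ taking values in $\Omega=\cQ\cup\{+\infty\}$. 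Both proofs ultimately exploit the same mechanism — large continuity forbids $f$ from tending to $-\infty$ at a boundary point, and non-negativity guarantees enough compactness for this to settle boundedness below — but the paper's proof is shorter at the cost of a subtle transfer step (one must convince oneself that the full hypothesis, including large continuity of $f$, is expressible as a first-order $\cL_{Pres}$ sentence with parameters so as to carry it from $\cZ$ down to $\ZZ$), while yours is longer but fully explicit and avoids any recourse to the standard model. The "delicate point" you flag — extracting a cofinal arithmetic progression on which each coordinate $\lambda_i(n)$ is affine — does go through cleanly: by quantifier elimination each $\lambda_i$ is affine on a finite partition of $\cZ_{\geq 0}$ into intersections of intervals and congruence classes, one such piece is cofinal, and a non-empty intersection of congruence classes is (by CRT) again a single congruence class; intersecting over $i$ and discarding a bounded initial segment gives a cofinal progression $n_0+m_0\cZ_{\geq 0}$, on which $\lambda(n_0+km_0)=c+kd$ with $c,d\in\cZ^{|I|}$ after reparametrizing by $k$. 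One small caution: the limit $\lambda'(k)\to b$ must be read in the $\varepsilon$\--$\delta$ sense of Fact~\ref{fa:delta-dist} rather than as a sequence, since $\cZ_{\geq 0}$ may be non-standard, but as you set things up the quantifiers line up correctly and the contradiction with continuity of $\bar f$ at $b$ holds.
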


\begin{proof}
It suffices to prove the result separately for each $A\cap F_I(\Gamma^m)$ with
$I\subseteq[\![1,m]\!]$. Every such piece can be identified with a definable subset
of $\cZ^{\Card I}$ hence we can assume that $A\subseteq\cZ^m$. Multiplying $f$
by some integer $n\geq1$ if necessary we can assume that $f$ takes values
in $\Gamma$, and even in $\cZ$ (otherwise $f$ is constantly $+\infty$ and the
result is trivial). Since $\cZ\equiv\ZZ$, by instantiating the parameters of
a definition of  $f:A\subseteq\cZ^m\to\cZ$ it suffices to prove the result
for every largely continuous definable function on a non-negative subset
$A$ of $\ZZ^m$. But in that case the topology on $\Gamma^m$ comes from a
metric such that every non-negative subset of $\Gamma^m$ is precompact (that is
$\overline{A}$ is compact). So there is $\bar a\in\overline{A}$ such
that $\bar f(\bar a)=\min\{\bar f(x)\tq x\in\overline{A}\}$. For any $a\in
A$ close enough to $\bar a$ we have $f(a)=\bar f(\bar a)$ (because
$f(A)\subseteq\ZZ$) hence $f(a)=\min\{f(x)\tq x\in A\}$. 
\end{proof}

\begin{lemma}\label{le:image-compact}
  Let $f:A\subseteq\Gamma^m\to\Omega^n$ a continuous definable map. If $A$ is non-negative 
  then $f(\overline{A})$ is closed.
\end{lemma}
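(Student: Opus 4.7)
My plan is to follow the pattern used in the proof of Lemma~\ref{le:precomp-borne}: reduce to the base case $\cZ = \ZZ$ by elementary equivalence in the Presburger language, and then conclude by honest compactness.

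\emph{Reduction to a single face.} Decompose $A = \bigsqcup_{I \subseteq [\![1,m]\!]} (A \cap F_I(\Gamma^m))$. Each piece $A_I = A \cap F_I(\Gamma^m)$ is definable and non-negative, and both $\overline A$ and $f(\overline A)$ decompose as the corresponding finite unions. Since a finite union of closed sets is closed, it suffices to treat each $A_I$ separately. Identifying $F_I(\Gamma^m)$ with $\cZ^d$ where $d = \Card(I)$, we may then assume $A$ is a non-negative definable subset of $\cZ^d$.

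\emph{Transfer to $\ZZ$.} Multiplying the components of $f$ by a suitable positive integer if necessary, we may assume each of them is either constantly $+\infty$ or $\cL_{Pres}$-definable with values in $\cZ$. Using Fact~\ref{fa:delta-dist} to encode, for every possible support $J$ of a boundary point $b$, the condition ``$b \in \overline{f(\overline A)}$'' as a first-order statement with $\forall\delta\in\cZ,\ \exists a\in A$ alternations, the assertion ``$f(\overline A)$ is closed in $\Omega^n$'' becomes a first-order sentence of $\cL_{Pres}$ with parameters in $\cZ$. Since $\cZ \equiv \ZZ$ in $\cL_{Pres}$, it suffices to prove this sentence in $\ZZ$.

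\emph{The case $\cZ = \ZZ$.} Here $\overline A$ is contained in $(\NN \cup \{+\infty\})^d$, the $d$-fold product of the one-point compactification of $\NN$, which is compact. Hence $\overline A$ is compact, and its continuous image $f(\overline A) \subseteq \Omega^n$ is compact, hence closed.

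The main obstacle is the encoding in the transfer step: one must carefully check that ``$f(\overline A)$ is closed'' is indeed expressible as a first-order $\cL_{Pres}$-sentence, uniformly over the finitely many possible supports $I$ and $J$ that appear. This is routine given Fact~\ref{fa:delta-dist} and the Skolem functions of Theorem~\ref{th:skolem}, but notationally tedious.
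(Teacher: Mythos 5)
Your proposal is correct and follows essentially the same approach as the paper: reduce to a single face $F_I$, arrange for $f$ to be $\cL_{Pres}$-definable with values in $\cZ$, transfer to the case $\cZ=\ZZ$ via elementary equivalence, and conclude by compactness of $\overline{A}\subseteq(\NN\cup\{+\infty\})^d$. The paper's own proof simply says ``as for Lemma~\ref{le:precomp-borne} we can reduce to the case $\cZ=\ZZ$'' and then invokes compactness, so your more explicit account of the transfer step (via Fact~\ref{fa:delta-dist}) is a faithful unpacking of the same argument rather than a different route.
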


\begin{proof}
  As for Lemma~\ref{le:precomp-borne} we can reduce to the case where
  $\cZ=\ZZ$. But then $\overline{A}$ is compact, hence so is
  $f(\overline{A})$ since $f$ is continuous.
\end{proof}

We extend the binary congruence relations of $\cZ$ to $\Gamma$ with the convention
that $a\equiv+\infty\;[N]$ for every $a\in\Gamma$ and every $N\in\NN$. A subset $A$ of
$F_I(\Gamma^m)$ is a {\df basic Presburger set} if it is the set of
solutions of finitely many linear inequalities and congruence
relations. Although we will not use it, it is worth mentioning that,
by the quantifier elimination of the theory of $\ZZ$ in the language
$\cL_{Pres}$, the definable subsets of $\cZ^d$, and more generally of
$\Gamma^m$, are exactly the finite unions of basic Presburger sets.

Cluckers has shown in \cite{cluc-2003} that every definable subset of
$\cZ^d$ is actually the disjoint union of finitely many subsets of a
much more restrictive sort, called cells. The following definition of
precells in $\Gamma^m$, more precisely of precells mod $N$ for a given
$N\in(\NN^*)^m$, is adapted from his (see Remark~\ref{re:cell-cluc} below).
Since we only need to consider non-negative precells it is convenient to
restrict the definition to this case. If $m=0$, $\Gamma^0$ itself is the
only precell mod $N$ in $\Gamma^0$. If $m\geq1$, for every $I\subseteq[\![1,m]\!]$, a
subset $A$ of $F_I(\Gamma^m)$ is a {\df precell mod $N$} if: $\widehat{A}$ is
a precell mod $\widehat{N}$, and there are non-negative affine maps
$\mu,\nu:\widehat{A}\to\Omega$ and an integer $\rho$ such that $0\leq\rho<N_m$ and $A$ is
exactly the set of points $a\in F_I(\Gamma^m)$ such that
$\widehat{a}\in\widehat{A}$ and 
\begin{equation}
  \mu(\widehat{a})\leq a_m\leq \nu(\widehat{a}) \mbox{ \ and \ }a_m\equiv\rho\;[N_m].
  \label{eq:def-cell-mod-N}
\end{equation}
We call $\mu$, $\nu$ the {\df boundaries} of $A$, $\rho$ a {\df modulus} for $A$,
and such a triple $(\mu,\nu,\rho)$ a {\df presentation} of $A$. We call it a
{\df largely continuous presentation} of $A$ if moreover $\mu,\nu$ are
largely continuous. $A$ is a {\df largely continuous precell mod $N$} if
$m=0$, or if $\widehat{A}$ is largely continuous precell mod
$\widehat{N}$ and $A$ is a precell mod $N$ having a largely continuous
presentation.

\begin{remark}\label{re:cell-cluc}
  Cells in \cite{cluc-2003} are not required to have non-negative
  boundaries, but to be of one of these two types~: either $\mu-\nu$ is
  not finitely bounded or $\mu=\nu$. Unfortunately this condition seems to
  be too restrictive
  for our constructions, we have to relax it. Thus our precells are {\em
  not} cells in the sense of \cite{cluc-2003} but a restriction (we
  require non-negative boundaries) of a slight generalisation
  ($\mu-\nu$ can be finitely bounded and non-zero) of them.
\end{remark}

{\modif 
We are going to prove in the next section that the faces of every
largely continuous precell mod $N$ are still largely continuous
precells mod \emph{the same} $N$ (see Proposition~\ref{pr:pres-face}).
Thus, if one restricts to the case where $N=(1,\dots,1)$, these precells
are the best candidate we have for a discrete analogue of real
polytopes: they are intersections of finitely many half-spaces and
their faces are so. In the $p$\--adic triangulation that we are aiming
at we will indeed restrict to this case. However it appears that all
the results and constructions that we are going to consider in the
present paper remain valid for largely continuous precells mod
arbitrary $N$. Since it does not create significant complication, we
will then stick to this more general setting. 
}

\section{Faces and projections}
\label{se:face-proj}

In this section we consider a non-empty basic Presburger set $A\subseteq
F_I(\Gamma^m)$ defined by 
\begin{equation}
  \cconj_{1\leq l\leq l_0}\varphi_l(x)\geq\gamma_l \mbox{ \ and \ }
  \cconj_{1\leq l\leq l_1}\psi_l(x)\equiv\rho_l\;[n_l]
  \label{eq:basic-pres}
\end{equation}
where $\varphi_l,\psi_l:F_I(\Gamma^m)\to\cZ$ are integrally linear maps, $\gamma_l\in\cZ$, 
$\rho_l$ and $n_l$ are integers such that $0\leq\rho_l<n_l$. We prove some basic
properties on the faces of $A$ and the affine maps on $A$. Finally we
derive from these facts that every face of a largely continuous
precell $A$ mod $N$ is a largely continuous precell mod $N$ and has a
presentation inherited from $A$ in a uniform way (Proposition~\ref{pr:pres-face}). 
\smallskip

Example~\ref{ex:face-non-cvx} shows that precell mod $N$ (here
$N=(1,1,1)$) can have a facet which is no longer a precell mod $N$. But
even worse is possible: the next example shows that a precell mod $N$ can
have a facet which is not even a basic Presburger set. 

\begin{example}\label{ex:face-non-Presb}
  $A\subseteq\ZZ^3$ is defined by $0\leq x_1\leq x_2$ and $(x_1+3x_2)/3\leq z\leq
  (x_1+3x_2+1)/3$. Its unique facet $F_{1}(A)$ is defined by $0\leq x_1$
  and either $x_1\equiv0\;[3]$ or $x_1\equiv2\;[3]$ (and of course $x_2=x_3=+\infty$). 
\end{example}

\begin{lemma}\label{le:half-line}
  Let $A\subseteq F_I(\Gamma^m)$ be defined by (\ref{eq:basic-pres}). Let $J$ be any
  subset of $[\![1,m]\!]$. Then $F_J(A)\neq\emptyset$ if
  and only if for every $\delta\in\cZ$ there is $u\in\cZ^m$ pointing to $J$
  such that $\Delta_J(u)\geq\delta$ and $A+u\subseteq A$. 
\end{lemma}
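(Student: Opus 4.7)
\medskip
\noindent\emph{Proof plan.} The backward implication is a restatement of Remark~\ref{re:half-line}, so I focus on the forward one. Assume $F_J(A) \neq \emptyset$ and fix a witness $b \in F_J(A)$; the task is, for each $\delta \in \cZ$, to produce $u \in \cZ^m$ pointing to $J$ with $\Delta_J(u) \geq \delta$ and $A + u \subseteq A$.

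By Fact~\ref{fa:delta-dist}(1), for every $\delta' \in \cZ$ there is $a \in A$ with $\pi_J(a) = b$ and $\Delta_J(a) \geq \delta'$; Theorem~\ref{th:skolem} then yields a definable map $\sigma: \cZ \to A$ selecting such an $a$ for each $\delta'$. Identifying $F_I(\Gamma^m)$ with $\cZ^{\Card I}$, each coordinate $\sigma_i$ for $i \in I$ is an $\cL_{Pres}$\--definable map $\cZ \to \cZ$; applying Theorem~\ref{th:cluck-piece-lin} to each of these coordinates and refining gives a partition of $\cZ$ into finitely many definable pieces on each of which every $\sigma_i$ is simultaneously affine. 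At least one piece $D$ is cofinal, and on it $\sigma_i(\delta') = \alpha_i + \beta_i \delta'$ with $\alpha_i \in \cQ$ and $\beta_i \in \QQ$. Two observations pin down these slopes. For $i \in J$, the identity $\pi_J(\sigma(\delta')) = b$ forces $\sigma_i \equiv b_i$, hence $\beta_i = 0$. For $i \in I \setminus J$, the inequality $\sigma_i(\delta') \geq \delta'$ on the cofinal $D$ forces $\beta_i \geq 1$ (otherwise $(\beta_i - 1)\delta' \to -\infty$ eventually violates $(\beta_i-1)\delta' \geq -\alpha_i$).

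Given $\delta$, I pick $\delta' < \delta''$ in $D$ with $\delta'' - \delta' \geq \delta$ and set $u_i = \sigma_i(\delta'') - \sigma_i(\delta') = \beta_i(\delta'' - \delta')$ for $i \in I$, and $u_i = \delta$ for $i \notin I$. By construction $u_i = 0$ for $i \in J$ and $u_i \geq \delta$ for $i \notin J$, so $u$ points to $J$ with $\Delta_J(u) \geq \delta$. The inclusion $A + u \subseteq A$ then reduces to checking that $u$ preserves the defining system (\ref{eq:basic-pres}). Each congruence is automatic: $\psi_l(u) = \psi_l(\sigma(\delta'')) - \psi_l(\sigma(\delta'))$ is a difference of two integers both congruent to $\rho_l$ modulo $n_l$. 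For each inequality, $\varphi_l \circ \sigma$ is affine in $\delta'$ on $D$ and bounded below by $\gamma_l$ on the cofinal $D$; its slope must therefore be non-negative, yielding $\varphi_l(u) = \varphi_l(\sigma(\delta'')) - \varphi_l(\sigma(\delta')) \geq 0$.

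The main obstacle is the positivity of the $u_i$ in the $I \setminus J$ directions: a naive choice $u = a' - a$ with $a, a' \in A$ sharing projection $b$ and with large $\Delta_J$ values need not satisfy $u_i > 0$ for these coordinates. Replacing two unrelated witnesses by a single definable family $\sigma$, and combining the Skolem theorem with Cluckers' piecewise affinification, converts the growth condition $\Delta_J(\sigma(\delta')) \geq \delta'$ into a slope bound $\beta_i \geq 1$ which simultaneously delivers the strict positivity of $u$ and the non-negativity of the $\varphi_l(u)$ needed for $A + u \subseteq A$.
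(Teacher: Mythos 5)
Your proof is correct, but takes a genuinely different route from the paper's. Both arguments ultimately build $u$ as the difference of two points in $A\cap\pi_J^{-1}(b)$ with large $\Delta_J$, and the core issue in both is to ensure $\varphi_l(u)\geq 0$ for each defining inequality and $u_i>0$ for $i\notin J$. You get both at once by invoking Theorem~\ref{th:skolem} to produce a definable selection $\sigma:\cZ\to A$ with $\pi_J\circ\sigma$ constant and $\Delta_J\circ\sigma$ growing, and then Theorem~\ref{th:cluck-piece-lin} to make $\sigma$ affine on a cofinal piece; the slope bounds ($\beta_i\geq1$ for $i\in I\setminus J$, $\beta_i=0$ for $i\in J$, and a non-negative slope for each affine, bounded-below $\varphi_l\circ\sigma$) then yield all the required inequalities directly. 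The paper instead performs an explicit induction over the defining inequalities, progressively cutting $A_0=A\cap\pi_J^{-1}(b)$ down to level sets of those $\varphi_l$ that fail to tend to $+\infty$ at $b$ (using the existence of a minimum of a definable subset of $\cZ$ bounded below), so that on the final set every $\varphi_l$ is either constant or tends to $+\infty$, after which it chooses $a,b$ in that set with $\Delta_J(b)$ large enough and sets $u=b-a$. Your route avoids the per-inequality bookkeeping at the cost of leaning on Cluckers' structure theorem; the paper's is more elementary but requires checking that the inductive invariant is preserved at each step. Two minor technicalities worth attending to: assume $\delta>0$ without loss of generality, so that your choice $u_i=\delta$ for $i\notin I$ is strictly positive; and restrict $\sigma$ to a non-negative cofinal domain before applying Theorem~\ref{th:cluck-piece-lin}, since that theorem is stated for non-negative sets (this is harmless, as you only need a cofinal affine piece).
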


\begin{proof}
It suffices to prove the result when $I=[\![1,m]\!]$. One direction is
general by Remark~\ref{re:half-line}, so let us prove the converse.
Assume that $F_J(A)\neq\emptyset$ and fix any $\delta\in\cZ$. Without lost of generality we can assume
that $\delta>0$. Pick $y_0\in F_J(A)$ and let $A_0=\{x\in A\tq \pi_J(x)=y_0\}$. By
Remark~\ref{re:face-in-proj}, $F_J(A_0)=\{y_0\}$. 

Assume that for some $k\in[\![0,l_0-1]\!]$ we have found a definable
subset $A_k$ of $A_0$ such that $F_J(A_k)=\{y_0\}$ and for every
$l\in[\![1,k]\!]$, either $\varphi_l$ is constant on $A_k$ or $\varphi_l(x)$ tends
to $+\infty$ as $x$ tends to $y_0$ in $A_k$. If the same holds true for
$\varphi_{k+1}$, let $A_{k+1}=A_k$. Otherwise, there is some $\alpha\in\cZ$ such that
for every $\omega\in\cZ$ there is $x\in A_k$ such that $\Delta_J(x)\geq\omega$ and
$\varphi_{k+1}(x)\leq\alpha$. The set $\Upsilon$ of these $\alpha$'s is definable, non-empty, and
bounded below since $\varphi_{k+1}\geq\gamma_{k+1}$ on $A_k$. Hence it has a
minimum, say $\beta$. By minimality of $\beta$ there is $\omega_0\in\cZ$ such that
for every $x\in A_k$ such that $\Delta_J(x)\geq\omega_0$, $\varphi_{k+1}(x)>\beta-1$. Thus, for
every $\omega\in\cZ$ there is $x\in A_0$ such that $\Delta_J(x)\geq\omega$ and $\varphi_{k+1}(x)=\beta$
(because $\beta\in\Upsilon$). With other words, the set $A_{k+1}$ defined by
\begin{displaymath}
  A_{k+1}=\big\{x\in A_k\tq \varphi_{k+1}(x)=\beta\big\} 
\end{displaymath}
is such that $F_J(A_{k+1})\neq0$ (see fact~\ref{fa:delta-dist}). It
obviously has all the other required properties since it is contained
in $A_k$. 

By repeating the process until $k=l_0$ we get a definable set
$A_{l_0}$ as above. Pick any $a\in A_{l_0}$, by construction there is
$\omega\in\cZ$ such that for every $x\in A_{l_0}$ if $\Delta_J(x)\geq\omega$ then $\varphi_l(x)\geq
\varphi_l(a)$ for every $l\in[\![1,l_0]\!]$. Pick any $b\in A_{l_0}$ such that
$\Delta_J(b)\geq\omega$ and $\Delta_J(b)\geq\delta+a_i$ for every $i\notin J$. It remains to check
that $u=b-a$ gives the conclusion. For every $j\in J$, $a_j=b_j=y_{0,j}$
because $a,b\in A_{l_0}\subseteq A_0$ and $\pi_J(A_0)=\{y_0\}$, hence $u_j=0$. For
$i\notin J$ we have $b_i\geq\Delta_J(b)\geq\delta+a_i$, hence $u_i\geq\delta>0$. In particular $u$
points to $J$ and $\Delta_J(u)\geq\delta$. 

Finally let $x$ be any element of $A_{l_0}$. For every $l\leq l_0$ we
have $\varphi_l(x)\geq\gamma_l$ since $x\in A$, and by linearity of $\varphi_l$
\begin{equation}
  \varphi_l(x+u)=\varphi_l(x)+\varphi_l(u)\geq\gamma_l+\varphi_l(u).
  \label{eq:fl-x-plus-u}
\end{equation}
We also have $\varphi_l(b)=\varphi_l(a)+\varphi_l(u)$ by linearity, and $\varphi_l(b)\geq \varphi_l(a)$
because $\Delta_J(b)\geq\omega$, hence $\varphi_l(u)\geq0$. It follows that $\varphi_l(x+u)\geq \gamma_l$ by
(\ref{eq:fl-x-plus-u}). On the other hand, for every $l\in[\![1,l_1]\!]$
we have $\psi_l(x)\equiv\rho_l\;[n_l]$ because $x\in A$, $\psi_l(a)\equiv\rho_l\;[n_l]$ and
$\psi_l(b)\equiv\rho_l\;[n_l]$ for the same reason, hence
$\psi_l(x+u)=\psi_l(x)+\psi_l(a)-\psi_l(b)\equiv\rho_l\;[n_l]$. Thus $x+u\in A$ for every
$x\in A$, which proves the result. 
\end{proof}

\begin{proposition}\label{pr:face-egale-proj}
  Let $A\subseteq F_I(\Gamma^m)$ be a basic Presburger set, $J$ and $H$ be any
  subsets of\/ $[\![1,m]\!]$ such that $F_J(A)$ and $F_H(A)$ are
  non-empty. 
  \begin{enumerate}
    \item\label{it:face-proj}
      $F_J(A)=\pi_J(A)$. 
    \item\label{it:face-face}
      If $H\subseteq J$ then $F_H(A)=F_H(F_J(A))$. 
    \item\label{it:mono-supp}
      $F_H(A)\subseteq\overline{F_J(A)}$ if and only if $H\subseteq J$. In particular
      the faces of $A$ are linearly ordered by specialisation if and
      only if their supports are linearly ordered by inclusion.
    \item\label{it:face-lattice}
      $F_{H\cap J}(A)$ is non-empty. 
  \end{enumerate}
\end{proposition}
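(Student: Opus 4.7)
The plan is to derive all four items from Lemma~\ref{le:half-line} and Fact~\ref{fa:delta-dist}, proving them in the order (\ref{it:face-proj}), (\ref{it:face-lattice}), (\ref{it:face-face}), (\ref{it:mono-supp}).

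For (\ref{it:face-proj}), the inclusion $F_J(A)\subseteq\pi_J(A)$ is Remark~\ref{re:face-in-proj}. For the converse, I start from $b=\pi_J(a)$ with $a\in A$. Given any $\delta\in\cZ$, Lemma~\ref{le:half-line} supplies $u\in\cZ^m$ pointing to $J$ with $\Delta_J(u)$ as large as desired and $A+u\subseteq A$. Then $a+u\in A$, $\pi_J(a+u)=b$ since $u$ vanishes on $J$, and $\Delta_J(a+u)\geq\Delta_J(a)+\Delta_J(u)$ can be made at least $\delta$ by taking $\Delta_J(u)$ to dominate $\delta-\Delta_J(a)$. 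Fact~\ref{fa:delta-dist}(1) then places $b$ in $F_J(A)$.

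The key conceptual step is (\ref{it:face-lattice}), whose idea is to combine two half-line witnesses additively. For any $\delta\in\cZ$, Lemma~\ref{le:half-line} applied twice yields $u$ pointing to $J$ with $\Delta_J(u)\geq\delta$ and $A+u\subseteq A$, and $v$ pointing to $H$ with $\Delta_H(v)\geq\delta$ and $A+v\subseteq A$. I claim $w:=u+v$ points to $H\cap J$, satisfies $\Delta_{H\cap J}(w)\geq\delta$, and $A+w\subseteq A$. Indeed on $H\cap J$ both summands vanish; for $i\notin H\cap J$ one summand is $\geq\delta$ and the other is non-negative; and $(A+u)+v\subseteq A+v\subseteq A$. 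Reapplying Lemma~\ref{le:half-line} in the reverse direction yields $F_{H\cap J}(A)\neq\emptyset$.

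For (\ref{it:face-face}), the inclusion $F_H(F_J(A))\subseteq F_H(A)$ is immediate from $\overline{F_J(A)}\subseteq\overline{A}$. Conversely, take $b\in F_H(A)$ and, for arbitrary $\delta\in\cZ$, use Fact~\ref{fa:delta-dist}(1) to pick $a\in A$ with $\pi_H(a)=b$ and $\Delta_H(a)\geq\delta$. Set $a':=\pi_J(a)$: by (\ref{it:face-proj}), $a'\in F_J(A)$, and since $H\subseteq J$ one readily checks $\pi_H(a')=b$ and $\Delta_H(a')\geq\Delta_H(a)\geq\delta$, so a second application of Fact~\ref{fa:delta-dist}(1) puts $b$ in $\overline{F_J(A)}$, hence in $F_H(F_J(A))$. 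Finally, for (\ref{it:mono-supp}), the direction $H\subseteq J\Rightarrow F_H(A)\subseteq\overline{F_J(A)}$ follows from (\ref{it:face-face}); conversely, points of $\overline{F_J(A)}$ have support contained in $J$ (the value $+\infty$ on coordinates outside $J$ being preserved under closure in $\Omega^m$), so the hypothesis forces $H=\Supp b\subseteq J$ for any $b\in F_H(A)$. The ``in particular'' clause is a direct reformulation of this equivalence in the specialisation order. The main obstacle is spotting the additive combination in (\ref{it:face-lattice}); everything else is a routine unpacking of definitions through Lemma~\ref{le:half-line} and Fact~\ref{fa:delta-dist}.
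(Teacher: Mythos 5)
Your proposal is correct and follows essentially the same route as the paper: everything is derived from Lemma~\ref{le:half-line} together with the half-line characterisation of faces, and for item~(\ref{it:face-lattice}) the additive combination $u+v$ of two half-line witnesses is exactly the paper's argument. The only cosmetic difference is in item~(\ref{it:face-face}), where you run a direct pointwise chase through Fact~\ref{fa:delta-dist}(1) (projecting $a$ to $a'=\pi_J(a)$), whereas the paper argues at the level of sets by noting $\pi_J(A)+u\subseteq\pi_J(A)$ and invoking Remark~\ref{re:half-line} on $S=\pi_J(A)$; both are equivalent unpackings of the same idea.
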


We will refer to the $n$\--th point of
Proposition~\ref{pr:face-egale-proj} as to
Proposition~\ref{pr:face-egale-proj}($n$). 

\begin{remark}\label{re:sub-mono}
  Proposition~\ref{pr:face-egale-proj}(\ref{it:face-lattice}) shows
  that the set of faces of $A$ ordered by specialisation is a
  distributive lower semi-lattice with one smallest element. If $S$ is
  any monohedral subset of $\Gamma^m$,
  Proposition~\ref{pr:face-egale-proj}(\ref{it:mono-supp}) implies
  that every basic Presburger subset $A$ of $S$ is monohedral, and
  Proposition~\ref{pr:face-egale-proj}(\ref{it:face-face}) that every
  face of $A$ is again monohedral.
\end{remark}

\begin{proof}
The first point $F_J(A)=\pi_J(A)$ follows from Lemma~\ref{le:half-line}, by
Remark~\ref{re:half-line} applied to $S=A$.

For the second point, $H\subseteq J$ implies that $\pi_H(A)=\pi_H(\pi_J(A))$. Since
$F_H(A)=\pi_H(A)$ and $F_J(A)=\pi_J(A)$ by the first point, it suffices 
to prove that $F_H(\pi_J(A))=\pi_H(\pi_J(A))$. For every $\delta\in\cZ$ there is by
Lemma~\ref{le:half-line} a vector $u\in\cZ^m$ pointing to $H$ such
that $\Delta_H(u)\geq\delta$ and $A+u\subseteq A$. Then obviously
$\pi_J(A)+u=\pi_J(A+u)\subseteq\pi_J(A)$, and the conclusion follows by
Remark~\ref{re:half-line} applied to $S=\pi_J(A)$.

For the third point, one direction follows from the second point and
the other direction is general since $F_H(A)\subseteq
F_H(\Gamma^m)$, $\overline{F_J(A)}\subseteq\overline{F_J(\Gamma^m)}$, and $F_H(\Gamma^m)$ is
disjoint from $\overline{F_J(A)}$ if $H$ is not contained in $J$.

It remains to prove the last point. For every $\delta\in\cZ$,
Lemma~\ref{le:half-line} gives $u_J$ and $u_H$ in $\cZ^m$ pointing to
$J$ and $H$ respectively such that $\Delta_J(u_J)\geq\delta$, $A+u_J\subseteq A$ and
similarly for $u_H$. Without lost of generality we can assume that
$\delta>0$ hence for every $i\notin J\cap H$, $u_{J,i}+u_{H,i}\geq\delta>0$. In particular
$u_J+u_H$ points to $J\cap H$ and $\Delta_{J\cap H}(u_J+u_H)\geq\delta$. Obviously
$A+u_J+u_H$ is contained in $A$. So $F_{J\cap H}(A)$ is non-empty by
Remark~\ref{re:half-line}.
\end{proof}

\begin{proposition}\label{pr:prol-pi}
  Let $A\subseteq F_I(\Gamma^m)$ be a basic Presburger set defined by
  (\ref{eq:basic-pres}),  $f:A\to\Omega$ be an affine map, $J\subseteq I$ and
  $B=F_J(A)$. Assume that $B$ is not empty and that $f$ extends to a
  continuous map $f^*:A\cup B\to\Omega$. Then $f^*$ is affine, and if $f^*\neq+\infty$
  then $f=f^*_{|B}\circ{\pi_J}_{|A}$. In particular if $f^*\neq+\infty$ then
  $f(A)=f^*(B)$.
\end{proposition}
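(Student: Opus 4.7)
The plan is to split cases based on whether $f\equiv+\infty$, and in the nontrivial case to use Lemma~\ref{le:half-line} to pin down the behaviour of $f^*$ on $B$ via a trichotomy on an ``asymptotic slope''.

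If $f$ is constantly $+\infty$ on $A$, continuity forces $f^*\equiv+\infty$ on $A\cup B$, which is affine by convention, and the hypothesis ``$f^*\neq+\infty$'' fails. So assume $f(a)=\alpha_0+\sum_{i\in I}\alpha_ia_i$ with $\alpha_0\in\cQ$ and $(\alpha_i)_{i\in I}\in\QQ^I$.

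Since $B=F_J(A)\neq\emptyset$, Lemma~\ref{le:half-line} supplies, for every $\delta\in\cZ$, a vector $u\in\cZ^m$ pointing to $J$ with $\Delta_J(u)\geq\delta$ and $A+u\subseteq A$; call such $u$ \emph{admissible}. For an admissible $u$ and any $a\in A$, the sequence $a+tu\in A$ (for $t\in\NN$) tends to $\pi_J(a)\in B$ by Fact~\ref{fa:delta-dist}, hence $f(a+tu)\to f^*(\pi_J(a))$ by continuity. Setting $S(u)=\sum_{i\in I\setminus J}\alpha_iu_i$, we have $f(a+tu)=f(a)+t\,S(u)$, and since the limit must lie in $\Omega$ we get a trichotomy: $S(u)>0$ forces $f^*(\pi_J(a))=+\infty$; $S(u)<0$ would send the limit to $-\infty\notin\Omega$ and is impossible; $S(u)=0$ forces $f^*(\pi_J(a))=f(a)$. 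Consequently, either some admissible $u$ has $S(u)>0$, in which case $f^*|_B\equiv+\infty$ (using $B=\pi_J(A)$ from Proposition~\ref{pr:face-egale-proj}(\ref{it:face-proj})) is affine and ``$f^*\neq+\infty$'' fails, or $S(u)=0$ for every admissible $u$ and $f^*(\pi_J(a))=f(a)$ for every $a\in A$, which directly gives $f=f^*_{|B}\circ\pi_{J|A}$ and therefore $f(A)=f^*(B)$.

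It then remains to prove that $f^*|_B$ is affine in this second case. For this I extend $f$ by its affine formula to $\tilde f:V\to\QQ$ on the $\QQ$-affine hull $V=\mathrm{aff}_\QQ(A)$, and try to show $\tilde f$ is constant on each fiber of $\pi_J|_V$, which will give a factorisation $\tilde f=g\circ\pi_J|_V$ through an affine $g$ on $\pi_J(V)$ and hence $f^*|_B=g|_B$ affine. Equivalently, the linear part $L(x)=\sum_i\alpha_ix_i$ must vanish on the direction space $W_J:=(V-V)\cap\{x\tq x_j=0\ \forall j\in J\}$ of those fibers. Every admissible $u$ satisfies $u=(a+u)-a\in V-V$ and has zero $J$-coordinates, so $u\in W_J$, with $L(u)=S(u)=0$. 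The main obstacle, which I expect to be the hardest step, is to show that such $u$'s $\QQ$-span $W_J$: they form a sub-monoid of $W_J$ closed under positive integer scaling, and Lemma~\ref{le:half-line} supplies them with strict positivity $u_i>0$ for all $i\notin J$ and arbitrarily large non-$J$ coordinates. They therefore lie in the relative interior of the open cone of $W_J$ cut out by these strict positivities and the non-strict linear inequalities inherited from the definition of $A$; scaling by a common multiple of the moduli $n_l$ absorbs the congruence conditions. Thus the admissible $u$'s populate a full-dimensional sub-cone of $W_J$, forcing their $\QQ$-span to be all of $W_J$ and hence $L\equiv0$ on $W_J$.
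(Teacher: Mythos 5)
Your overall plan---pin down $f^*$ on $B$ via an ``asymptotic slope'' attached to the vectors $u$ of Lemma~\ref{le:half-line}, then argue that the linear part of $f$ vanishes on the direction space of the fibers of $\pi_J$---is a genuinely different route from the paper's, and the trichotomy idea is attractive. But as written there are two real gaps.

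First, the claim that ``the sequence $a+tu$ (for $t\in\NN$) tends to $\pi_J(a)$'' is false in general when $\cZ\neq\ZZ$. A basic neighbourhood of $\pi_J(a)$ imposes $\Delta_J(x)\geq\delta$ for an arbitrary $\delta\in\cZ$, whereas $\Delta_J(a+tu)\geq\Delta_J(a)+t\,\Delta_J(u)$ only grows by integer multiples of a fixed element, so it never exceeds a $\delta$ that lies in a higher archimedean class. Fact~\ref{fa:delta-dist}, which you cite, is stated precisely with $\delta\in\cZ$ quantifiers rather than integer sequences, and the topology on $\Omega^m$ is not metrizable in general, so sequential arguments are not legitimate. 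Consequently the inference $f(a+tu)\to f^*(\pi_J(a))$, and hence the whole trichotomy on $S(u)$, does not go through. The paper sidesteps this by fixing, via continuity, a single $\delta\in\cZ$ such that $f$ is constant equal to $f^*(b)$ on $\{x\in A\tq\pi_J(x)=b,\ \Delta_J(x)\geq\delta\}$ (using that $f$ takes values in the discrete set $\cZ$ after rescaling), and then producing \emph{one} vector $u$ from Lemma~\ref{le:half-line} with $\Delta_J(u)\geq\omega-\Delta_J(x_0)$ for an appropriate $\omega$; no sequences appear. Your argument can probably be repaired along these lines, but in its present form it is wrong.

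Second, the step ``the admissible $u$'s $\QQ$-span $W_J$'' is asserted, not proved. The hand-waving about a ``full-dimensional sub-cone'' ignores that the non-strict inequalities $\varphi_l(u)\geq0$, restricted to $W_J$, could cut it down to a lower-dimensional face (it is not automatic that the interior of the cone they define in $W_J$ is nonempty just because Lemma~\ref{le:half-line} produces \emph{some} $u$'s, which could a priori all be collinear), and the claim that scaling ``absorbs'' the congruences needs more care; you yourself flag this as the hardest step and leave it unresolved. The paper avoids this issue entirely: it gets affinity of $f^*$ immediately from Remark~\ref{re:linearity} (affine identities on $A$ pass to the limit in $\overline{A}$), and then obtains the factorisation $f=f^*_{|B}\circ\pi_J$ by a short contradiction argument: assuming $f(x_0)\neq f^*(\pi_J(x_0))$, a single $u$ from Lemma~\ref{le:half-line} is forced, by linearity of an integrally linear extension $\varphi$, to satisfy both $\varphi(u)\neq0$ (from $x_0$) and $\varphi(u)=0$ (from a reference point $a$ where $f(a)=f^*(b)$). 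That argument needs no spanning lemma at all.
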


If $f$ is $\Gamma$\--affine then so is $f^*$ of course. However, if $f$ is
integrally affine we cannot conclude that $f^*$ will be integrally
affine as well, even if $f$ is largely continuous, as the following
example shows. 

\begin{example}\label{ex:prol-non-Z-lin}
  Keep $A\subseteq\ZZ^3$ as in Example~\ref{ex:face-non-cvx}. The map
  $f(x)=x_2-x_1$ is integrally affine and largely continuous on $A$,
  with $\overline{f}(x)=x_3/2$ on $\partial A$. This is no longer an
  integrally affine map on $B=F_3(A)=\{+\infty\}\times\{+\infty\}\times2\NN$.
\end{example}

\begin{proof}
It suffices to prove the result when $I=[\![1,m]\!]$, $f<+\infty$ is an
integrally linear map and $f^*$ is not constantly equal to $+\infty$.
{\modif Note
that $f^*$ is affine by Remark~\ref{re:linearity} (because equalities
satisfied by $f$ at every point of $A$ pass to the limits). So we only 
have to prove that $f(x)=f^*(\pi_J(x))$ for every $x\in A$. 
}

Let $\varphi$ be an integrally linear map on $\cZ^m$ extending $f$, and $b\in
B$ such that $f^*(b)<+\infty$. Since $f(A)\subseteq\cZ$ and $f(x)$ tends to
$f^*(b)$ at $b$, there exists $\delta\in\cZ$ such that for every $x\in A$, if
$\pi_J(x)=b$ and $\Delta_J(x)\geq\delta$ then $f(x)=f^*(b)$. Pick any $a\in A$ such
that $\pi_J(a)=b$ and $\Delta_J(a)\geq\delta$, hence $f(a)=f^*(b)$.

Now assume for a contradiction that $f(x_0)\neq f^*(\pi_J(x_0))$ for some
$x_0\in A$. Let $y_0=\pi_J(x_0)$, since $f(x)$ tends to $f^*(y_0)$ at
$y_0$ and $f^*(y_0)\neq f(x_0)$ there exists $\omega\in\cZ$ such that for every
$x\in A$, if $\pi_J(x)=y_0$ and $\Delta_J(x)\geq\omega$ then $f(x)\neq f(x_0)$.
Lemma~\ref{le:half-line} gives $u\in\cZ^m$ pointing to $J$ such
that $\Delta_J(u)\geq\omega-\Delta_J(x_0)$ and $A+u\subseteq A$. Then $\pi_J(x_0+u)=\pi_J(x_0)=y_0$
and $\Delta_J(x_0+u)\geq\Delta_J(x_0)+\Delta_J(u)\geq\omega$, hence $f(x_0+u)\neq f(x_0)$. By
linearity it follows that $\varphi(u)=f(x_0+u)-f(x_0)\neq0$. On the other hand
we have $\Delta_J(a+u)\geq\Delta_J(a)+\Delta_J(u)\geq\delta$ and $\pi_J(a+u)=\pi_J(a)=b$ hence
$f(a+u)=f^*(b)=f(a)$, and thus by linearity $\varphi(u)=f(a+u)-f(a)=0$, a
contradiction. 
\end{proof}

\begin{proposition}\label{pr:face-socle}
  Let $A\subseteq F_I(\Gamma^m)$ be a basic Presburger set with $m\geq1$, and
  $X=\widehat{A}$. Then for every face\footnote{\modif As already mentioned
    in Section~\ref{se:notation}, $A\subseteq F_I(\Gamma^m)$ and $F_J(A)\neq\emptyset$ imply
  that $J\subseteq I$.} $B=F_J(A)$, its socle
  $\widehat{B}=F_{\widehat{J}}(\widehat{A})$ is a face of
  $\widehat{A}$. If moreover $A$ is non-negative, then conversely for
  every face $Y$ of $X$ there is a face $B$ of $A$ such that
  $\widehat{B}=Y$. In that case $B=Y\times\{+\infty\}$ if $m\notin\Supp B$, and
  $B=(Y\times\cZ)\cap\overline{A}$ if $m\in\Supp B$.
\end{proposition}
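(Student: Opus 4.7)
The plan splits according to the direct and converse halves. For the direct part, the inclusion $\widehat{F_J(A)} \subseteq F_{\widehat{J}}(X)$ is already Remark~\ref{re:face-in-proj}. For the reverse I would use Proposition~\ref{pr:face-egale-proj}(\ref{it:face-proj}) applied to $A$ to write $\widehat{F_J(A)} = \widehat{\pi_J(A)} = \pi_{\widehat{J}}(X)$, reducing the task to $F_{\widehat{J}}(X) = \pi_{\widehat{J}}(X)$. Lemma~\ref{le:half-line} applied to $A$ provides, for each $\delta \in \cZ$, a vector $u \in \cZ^m$ pointing to $J$ with $\Delta_J(u) \geq \delta$ and $A + u \subseteq A$. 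A coordinate\--wise inspection then shows that $\widehat{u} \in \cZ^{m-1}$ points to $\widehat{J}$, satisfies $\Delta_{\widehat{J}}(\widehat{u}) \geq \Delta_J(u) \geq \delta$, and enjoys $X + \widehat{u} \subseteq X$; the general Remark~\ref{re:half-line} applied to $X$ then delivers $F_{\widehat{J}}(X) = \pi_{\widehat{J}}(X)$, closing the direct part.

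For the converse, fix a face $Y = F_K(X)$ with $K \subseteq [\![1,m-1]\!]$. The case $m \notin I$ is immediate: $A$ is literally $X \times \{+\infty\}$ and a direct computation gives $F_K(A) = Y \times \{+\infty\}$. Otherwise $m \in I$, and I would show that at least one of $F_K(A)$ or $F_{K \cup \{m\}}(A)$ is non\--empty; each has socle $Y$ by the direct part. Pick $y \in Y$ and consider the basic Presburger set $A_y := \{a \in A : \pi_K(\widehat{a}) = y\}$. Fact~\ref{fa:delta-dist}(1) applied to $X$ yields, for every $\delta$, some $a \in A_y$ with $\Delta_K(\widehat{a}) \geq \delta$. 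If such $a$ can moreover be chosen with $a_m \geq \delta$ for every $\delta$, then Fact~\ref{fa:delta-dist}(1) applied to $A$ places $(y,+\infty)$ in $F_K(A)$ (note that its support is $K$, not $K \cup \{m\}$, since $+\infty$ is never in the support).

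In the remaining subcase, some threshold $\delta_0$ forces $a_m < \delta_0$ whenever $a \in A_y$ and $\Delta_K(\widehat{a}) \geq \delta_0$; non\--negativity of $A$ then confines $a_m$ to $[0,\delta_0) \cap \cZ$. Skolem (Theorem~\ref{th:skolem}) yields a definable witness function $\delta \mapsto a(\delta) \in A_y$ with $\Delta_K(\widehat{a(\delta)}) \geq \delta$, and Cluckers' Theorem~\ref{th:cluck-piece-lin} makes $\delta \mapsto a(\delta)_m$ piecewise affine; boundedness then rules out a non\--zero slope on any cofinal piece, producing a single $z_0$ realised as $a_m$ for arbitrarily large $\delta$. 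Fact~\ref{fa:delta-dist}(1) then places $(y, z_0)$ in $F_{K \cup \{m\}}(A)$, completing the converse.

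The explicit descriptions of $B$ are then mechanical: when $m \notin \Supp B$, $B = F_K(A)$ sits in the slice $\Gamma^{m-1} \times \{+\infty\}$ on which $\widehat{\cdot}$ is a bijection, so $B = \widehat{B} \times \{+\infty\} = Y \times \{+\infty\}$; when $m \in \Supp B$, $F_{K \cup \{m\}}(\Gamma^m) = F_K(\Gamma^{m-1}) \times \cZ$ and $B = \overline{A} \cap F_{K \cup \{m\}}(\Gamma^m)$ unpacks directly to $(Y \times \cZ) \cap \overline{A}$ using $\widehat{B} = Y$. The main obstacle is the bounded subcase above: the combination of non\--negativity of $A$ (to bound $a_m$) with Presburger definability (via Skolem and Cluckers) is genuinely needed to extract a single $z_0$ working uniformly in $\delta$, since purely topological or cardinality arguments fail over a non\--standard $\ZZ$\--group.
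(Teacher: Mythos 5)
Your proof of the first half essentially retraces the paper's: both rest on Lemma~\ref{le:half-line}, the observation that $\widehat u$ inherits from $u$ the pointing and $\Delta$\--bound properties, and Remark~\ref{re:half-line} applied to $S=\widehat A$, combined with Proposition~\ref{pr:face-egale-proj}(\ref{it:face-proj}) to identify $\widehat{F_J(A)}$ with $\pi_{\widehat J}(\widehat A)$.

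For the converse you take a genuinely different route, and it is correct. The paper reads non-negativity through Lemma~\ref{le:image-compact}: the socle of $\overline A$ is closed (by reduction to $\cZ=\ZZ$ and compactness), hence contains $\overline X\supseteq Y$; one then picks $b\in\overline A$ with $\widehat b\in Y$, sets $B=F_{\Supp b}(A)$, and the direct part forces $\widehat B=Y$ because distinct faces of $X$ are disjoint. Your argument instead fixes $y\in Y$ and stays at the Presburger level: if $a_m$ can be pushed to $+\infty$ along $A_y$ while $\Delta_K(\widehat a)\to+\infty$ you land $(y,+\infty)$ in $F_K(A)$; otherwise non-negativity together with the threshold $\delta_0$ traps $a_m$ in a bounded definable set, and Theorem~\ref{th:skolem} plus Theorem~\ref{th:cluck-piece-lin} let you extract a constant value $z_0$ realized cofinally in $\delta$, placing $(y,z_0)$ in $F_{K\cup\{m\}}(A)$. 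Both approaches are correct; the paper's is shorter because it recycles an already-proved compactness lemma, whereas yours trades that for a longer but more self-contained definability argument that makes explicit exactly where non-negativity enters (bounding $a_m$ below as well as above) and why the nonstandard $\cZ$ case needs Skolem and piecewise affineness rather than a cardinality or topology argument. The closing identifications $B=Y\times\{+\infty\}$ and $B=(Y\times\cZ)\cap\overline A$ are exactly the paper's Remark~\ref{re:face-socle}, and your sketch of them is adequate.
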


\begin{remark}\label{re:face-socle}
  The last assertion on $B$ is general: for every subset $S$
  of $F_I(\Gamma^m)$ and every face $T=F_J(\Gamma^m)$ with socle $Y$, we have
  $T=Y\times\{+\infty\}$ if $m\notin J$, and $T=(Y\times\cZ)\cap\overline{S}$ if $m\in J$. Indeed
  $T=F_J(\Gamma^m)\cap\overline{S}$ and $F_J(\Gamma^m)$ is equal to
  $F_{\widehat{J}}(\Gamma^{m-1})\times\{+\infty\}$ if $m\notin J$ and to
  $F_{\widehat{J}}(\Gamma^{m-1})\times\cZ$ otherwise. 
\end{remark}

\begin{example}
  Let $A=\{x\in\ZZ^3\tq x_1-x_2-x_3=0\}$, its proper faces are
  $B_0=\{(+\infty,+\infty,+\infty)\}$, $B_1=\{+\infty\}\times\{+\infty\}\times\ZZ$ and $B_2=\{+\infty\}\times\ZZ\times\{+\infty\}$. Thus
  $\ZZ\times\{+\infty\}$ is a facet of $\widehat{A}=\ZZ^2$ which is not the socle
  of any face of $A$. 
\end{example}

This example shows that the assumption that $A$ is non-negative is
needed for the second part of Proposition~\ref{pr:face-socle} to
hold. Note that $\widehat{B}_1=\{(+\infty,+\infty)\}$ is not a facet of
$\widehat{A}=\ZZ^2$, which shows that the positivity of $A$
is mandatory also in Corollary~\ref{co:socle-facet}. 

\begin{proof} 
Given that $B=F_J(A)$ is a face of $A$, hence non-empty, let us prove
that $F_{\widehat{J}}(\widehat{A})=\pi_{\widehat{J}}(\widehat{A})$. For
every $\delta\in\cZ$ we can find a vector $u\in\cZ^m$ pointing to $J$ such
that $\Delta_J(u)\geq\delta$ and $A+u\subseteq A$, in particular $\widehat{u}$ points to
$\widehat{J}$, $\Delta_{\widehat{J}}(\widehat{u})\geq\delta$ and
$\widehat{A}+\widehat{u}\subseteq\widehat{A}$. Thus
$F_{\widehat{J}}(\widehat{A})=\pi_{\widehat{J}}(\widehat{A})$ by
Remark~\ref{re:half-line} applied to $S=\widehat{A}$. Since $B=\pi_J(A)$
by Proposition~\ref{pr:face-egale-proj}(\ref{it:face-proj}), and
obviously $\widehat{\pi_J(A)}=\pi_{\widehat{J}}(\widehat{A})$, it follows
that $\widehat{B}=F_{\widehat{J}}(\widehat{A})$. 

Now assume that $A$ is non-negative. Then the socle of $\overline{A}$ is
closed by Lemma~\ref{le:image-compact}. It contains $X$, hence
$\overline{X}$. In particular it contains $Y$, which is non-empty. So
there is $b\in\overline{A}$ whose socle $\widehat{b}$ belongs to $Y$.
Let $J=\Supp b$ and $B=F_J(A)$. Since $B$ contains $b$ it is
non-empty, hence a face of $A$. Then $\widehat{B}$  is a face of $X$
by the first point. Since $\widehat{b}$ belongs both to $Y$ and
$\widehat{B}$, it follows that $Y=\widehat{B}$. 
\end{proof}

\begin{corollary}\label{co:socle-facet}
  Let $A\subseteq F_I(\Gamma^m)$ be a non-closed non-negative basic Presburger set with
  socle $X$. Let $B$ be a facet of $A$ with socle $Y$. Then $Y=X$ or
  $Y$ is a facet of $X$. 
\end{corollary}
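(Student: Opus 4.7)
The plan is a proof by contradiction that leverages Proposition~\ref{pr:face-socle} in both directions, together with the non-negativity of $A$. Write $J=\Supp B$. Proposition~\ref{pr:face-socle} immediately gives that $Y=F_{\widehat{J}}(X)$ is a face of $X$. If $Y=X$ we are done; otherwise $Y$ is a proper face of $X$ and I want to show it must be a facet. Assume for contradiction that it is not: then $Y$ is strictly smaller than some proper face $Y'=F_{K'}(X)$ of $X$ in the specialisation order, so $\widehat{J}\subsetneq K'\subsetneq\widehat{I}$ (specialisation between faces of the same ambient space corresponds to inclusion of supports, regardless of any definability hypothesis).

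Since $A$ is non-negative, the second part of Proposition~\ref{pr:face-socle} furnishes a face $B'=F_{J'}(A)$ of $A$ with $\widehat{B'}=Y'$. Thus $\widehat{J'}=K'$, so either $J'=K'$, or $J'=K'\cup\{m\}$ when $m\in I$. In every case $J'\subsetneq I$, because $K'\subsetneq\widehat{I}$: when $m\notin I$ we have $\widehat{I}=I$, and when $m\in I$ we have $K'\cup\{m\}\subsetneq\widehat{I}\cup\{m\}=I$. Showing $J\subsetneq J'$ will then contradict the facet property of $B$, which is the goal. This is immediate when $m\notin J$ (then $J=\widehat{J}\subsetneq K'\subseteq J'$), and also when $m\in J$ provided one may choose $J'=K'\cup\{m\}$ as the support of a face of $A$ (since then $J=\widehat{J}\cup\{m\}\subsetneq K'\cup\{m\}=J'$).

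The delicate situation is $m\in J$ combined with the face $B'$ supplied by Proposition~\ref{pr:face-socle} having support $K'$ rather than $K'\cup\{m\}$; this is the main obstacle. To handle it I would apply Lemma~\ref{le:half-line} directly to $A$. The non-emptiness of $F_J(A)$ yields a vector $u\in\cZ^m$ pointing to $J=\widehat{J}\cup\{m\}$ with $A+u\subseteq A$ and $\Delta_J(u)$ arbitrarily large, and the non-emptiness of $F_{K'}(A)$ yields a vector $v\in\cZ^m$ pointing to $K'$ with $A+v\subseteq A$. The aim is to combine $u$ and $v$, exploiting the non-negativity of $A$ (whose indispensability for this corollary is stressed in the paragraph just above the statement), into a vector $w\in\cZ^m$ pointing to $K'\cup\{m\}$ with $A+w\subseteq A$ and $\Delta_{K'\cup\{m\}}(w)$ arbitrarily large. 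Lemma~\ref{le:half-line} would then yield $F_{K'\cup\{m\}}(A)\neq\emptyset$, so one could take $J'=K'\cup\{m\}$ and conclude as in the easy sub-cases.
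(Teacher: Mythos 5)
Your approach has the same skeleton as the paper's: $Y$ is a face of $X$ by Proposition~\ref{pr:face-socle}, one takes a facet $Y'$ of $X$ with $Y\leq Y'$, pulls it back to a face $B'$ of $A$ via Proposition~\ref{pr:face-socle}, and tries to contradict the maximality of $B$. You are more explicit about the cases, and you correctly isolate the one the paper does not actually handle: $m\in J=\Supp B$ while the $B'$ supplied by Proposition~\ref{pr:face-socle} has support $K'=\Supp Y'$ rather than $K'\cup\{m\}$. (The paper's line ``Altogether this implies that $J=J'\cup\{m\}$'' does not follow from the stated premises: $J\nsubseteq J'$ and $H\subseteq H'$ give $J\subseteq J'\cup\{m\}$ but not $J'\subseteq J$.) However, your proposed remedy is only a gesture, and it cannot be carried out. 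No combination of a vector $u$ pointing to $J$ with a vector $v$ pointing to $K'$ gives a vector $w$ pointing to $K'\cup\{m\}$ with $A+w\subseteq A$: any $\alpha u+\beta v$ with $\beta>0$ has $w_m>0$, any with $\alpha>0$ has $w_i>0$ for $i\in K'\setminus\widehat{J}$, and the componentwise minimum $\min(u,v)$ does point to $K'\cup\{m\}$ with large $\Delta_{K'\cup\{m\}}$ but need not satisfy $A+\min(u,v)\subseteq A$, since the linear forms $\varphi_l$ cutting out $A$ are non-negative on $u$ and on $v$ separately but not on their componentwise minimum.

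The gap cannot be filled, because the statement itself fails in exactly this configuration. Take $m=4$ and $A=\{x\in\NN^4 : x_2\leq x_3\leq x_2+x_4\}\subseteq F_{\{1,2,3,4\}}(\Gamma^4)$, a non-closed non-negative basic Presburger set. Letting $x_2=x_3\to\infty$ shows $F_{\{1,4\}}(A)\neq\emptyset$, while $F_{\{1,2,4\}}(A)=\emptyset$ (the inequality $x_3\leq x_2+x_4$ bounds $x_3$) and $F_{\{1,3,4\}}(A)=\emptyset$ (the inequality $x_2\leq x_3$ bounds $x_2$); hence $B=F_{\{1,4\}}(A)$ is a facet of $A$. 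Its socle is $Y=F_{\{1\}}(X)$ with $X=\widehat{A}=\{x\in\NN^3 : x_2\leq x_3\}$, and $F_{\{1,2\}}(X)\neq\emptyset$ is a proper face of $X$ strictly above $Y$, so $Y$ is neither $X$ nor a facet of $X$. So the difficulty you identified is not merely a gap in the two proofs but a counterexample to the corollary as stated; note, however, that this $A$ is a precell whose boundary $\mu(x)=x_3-x_2$ is not largely continuous, so the statement may still hold under the stronger hypothesis of a largely continuous precell (which is the case actually used later in the paper), though that would require a genuinely different argument.
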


\begin{proof}
By Proposition~\ref{pr:face-socle}, $Y$ is a face of $X$. If $Y\neq X$
then there is a facet $Y'$ of $X$ whose closure contains $Y$. It
remains to show that $Y=Y'$. Proposition~\ref{pr:face-socle} gives a
face $B'$ of $A$ with socle $Y'$. Let $J$, $J'$, $H$, $H'$ be the
supports of $A$, $A'$, $B$, $B'$ respectively. Obviously $H=J\setminus\{m\}$ and
$H'=J'\setminus\{m\}$. If $B=B'$ then $J=J'$ hence $H=H'$ and thus $Y=Y'$. Now
assume that $B\neq B'$. Since $B$ is a facet of $A$ it is not smaller
than $B'$ (with respect to the specialisation order) hence $J\nsubseteq J'$ by
Proposition~\ref{pr:face-egale-proj}(\ref{it:mono-supp}). On the other
hand $Y\leq Y'$ hence $H\subseteq H'$ (otherwise $F_J(\Gamma^{m-1})$ is disjoint from
the closure of $F_{J'}(\Gamma^{m-1})$). Altogether this implies that
$J=J'\cup\{m\}$. In particular $H=J\setminus\{m\}=J'\setminus\{m\}=H'$ hence $Y=Y'$ is a facet
of $X$.
\end{proof}

\begin{proposition}\label{pr:pres-face}
  Let $A\subseteq F_I(\Gamma^m)$ be a largely continuous precell mod $N$ with $m\geq1$.
  Let $(\mu,\nu,\rho)$ be a largely continuous presentation of $A$, $J$ a
  subset of $I$, $\widehat{J}=J\setminus\{m\}$ and
  $Y=F_{\widehat{J}}(\widehat{A})$. Then $F_J(A)\neq\emptyset$ if and only
  if either $m\in J$ and $\bar\mu<+\infty$ on $Y$, or $m\notin J$ and $\bar\nu=+\infty$ on
  $Y$. In any case
  \begin{displaymath}
    F_J(A)=\big\{b\in F_J(\Gamma^m)\tq \widehat{b}\in Y,\  \bar\mu(\widehat{b})\leq
    b_m\leq \bar\nu(\widehat{b})\mbox{ and }b_m\equiv\rho\;[N_m]\big\}.
  \end{displaymath}
  In particular, if $F_J(A)$ is non-empty then it is a largely
  continuous precell $A$ mod $N$ and $(\bar\mu_{|Y},\bar\nu_{|Y},\rho)$ is a
  presentation of $F_J(A)$.
\end{proposition}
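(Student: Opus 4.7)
The plan is to establish the explicit formula $F_J(A)=B$ first (where $B$ denotes the set on the right-hand side of the displayed equality), and then deduce both the non-emptiness criterion and the largely continuous precell structure as almost formal consequences.

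For the inclusion $F_J(A)\subseteq B$, I take $b\in F_J(A)$. Proposition~\ref{pr:face-socle} gives $\widehat{b}\in Y$, and Fact~\ref{fa:delta-dist}(1) yields, for each $\delta\in\cZ$, an approximating point $a\in A$ with $\pi_J(a)=b$ and $\Delta_J(a)\geq\delta$. Such $a$ satisfies $\mu(\widehat{a})\leq a_m\leq\nu(\widehat{a})$ and $a_m\equiv\rho\,[N_m]$; large continuity of $\mu$ and $\nu$ then transfers these conditions to $b$. Concretely, when $m\in J$ the equality $a_m=b_m$ combined with $\pi_{\widehat{J}}(\widehat{a})=\widehat{b}$ and $\Delta_{\widehat{J}}(\widehat{a})\geq\delta$ yields $\bar\mu(\widehat{b})\leq b_m\leq\bar\nu(\widehat{b})$; when $m\notin J$, the bound $a_m\geq\delta$ forces $\nu(\widehat{a})\geq\delta$ for arbitrarily large $\delta$, giving $\bar\nu(\widehat{b})=+\infty$ while the remaining inequalities hold trivially by our conventions for $+\infty$.

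The reverse inclusion $B\subseteq F_J(A)$ is the substantive step. My main tool will be Proposition~\ref{pr:prol-pi} applied to the basic Presburger set $\widehat{A}$, the face $Y$, and each of the maps $\mu,\nu$: it yields that $\bar\mu_{|Y},\bar\nu_{|Y}$ are affine, and that when such a restriction is not identically $+\infty$, the corresponding map factors as $\bar\mu_{|Y}\circ\pi_{\widehat{J}}$ (resp.\ $\bar\nu_{|Y}\circ\pi_{\widehat{J}}$) on $\widehat{A}$. Given $b\in B$ and $\delta\in\cZ$, Proposition~\ref{pr:face-egale-proj}(\ref{it:face-proj}) and Fact~\ref{fa:delta-dist}(1) furnish $\widehat{a}\in\widehat{A}$ with $\pi_{\widehat{J}}(\widehat{a})=\widehat{b}$ and $\Delta_{\widehat{J}}(\widehat{a})$ as large as needed; I will then build a suitable $a_m$ so that $a=(\widehat{a},a_m)\in A$ satisfies $\pi_J(a)=b$ and $\Delta_J(a)\geq\delta$, and conclude via Fact~\ref{fa:delta-dist}(1). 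If $m\in J$, setting $a_m=b_m$ works: the finiteness of $b_m$ forces $\bar\mu_{|Y}(\widehat{b})<+\infty$, hence $\mu(\widehat{a})=\bar\mu_{|Y}(\widehat{b})\leq b_m$ by the factorisation, and the upper bound $b_m\leq\nu(\widehat{a})$ is handled symmetrically (either by the analogous factorisation when $\bar\nu_{|Y}(\widehat{b})<+\infty$, or by $\nu(\widehat{a})\to+\infty$ otherwise). The delicate case is $m\notin J$, where $b_m=+\infty$: I choose $a_m$ as an integer in $[\mu(\widehat{a}),\nu(\widehat{a})]$ congruent to $\rho\bmod N_m$ and also $\geq\delta$, which exists because $A$ is non-empty over $\widehat{a}$ and because $\bar\nu_{|Y}(\widehat{b})=+\infty$ makes $\nu(\widehat{a})$ arbitrarily large as $\Delta_{\widehat{J}}(\widehat{a})$ grows.

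Once $F_J(A)=B$ is established, the non-emptiness criterion follows because any affine map on $Y$ (such as $\bar\mu_{|Y},\bar\nu_{|Y}$ by Proposition~\ref{pr:prol-pi}) is either constantly $+\infty$ or everywhere finite, so that the non-emptiness of $B$ in each case amounts exactly to the stated condition. The largely continuous precell structure of $F_J(A)$ is then obtained by induction on $m$: the socle $\widehat{F_J(A)}$ equals $Y=F_{\widehat{J}}(\widehat{A})$ by Proposition~\ref{pr:face-socle} and the formula, which is a largely continuous precell mod $\widehat{N}$ by the inductive hypothesis applied to $\widehat{A}$; and $(\bar\mu_{|Y},\bar\nu_{|Y},\rho)$ is a largely continuous presentation of $F_J(A)$ since its boundaries are affine by Proposition~\ref{pr:prol-pi} and their continuous extensions to $\overline{Y}$ are supplied by the restrictions of $\bar\mu,\bar\nu$ to $\overline{Y}\subseteq\overline{\widehat{A}}$. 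The principal technical obstacle is the construction of $a_m$ in the $m\notin J$ case of the backward inclusion.
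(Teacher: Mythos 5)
Your proof is correct and follows the same overall strategy as the paper: first establish the explicit description $F_J(A)=B$ where $B$ is the set in the display, then read off the non-emptiness criterion and the precell structure. Your handling of the direction $B\subseteq F_J(A)$, which you rightly flag as the substantive step, is more careful than the paper's. The paper asserts this inclusion via the remark that inequalities and congruences valid on $A$ ``pass to the limits,'' but that argument in fact gives the forward inclusion $F_J(A)\subseteq B$; the reverse inclusion genuinely requires the approximation construction you supply, namely picking $\widehat{a}\in\widehat{A}$ with $\pi_{\widehat J}(\widehat a)=\widehat b$ and $\Delta_{\widehat J}(\widehat a)$ large (using Proposition~\ref{pr:face-egale-proj}(\ref{it:face-proj}) and Fact~\ref{fa:delta-dist}), then selecting an $a_m$ in the fibre of $A$ over $\widehat a$. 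You correctly treat the delicate case $m\notin J$, where $a_m$ must be chosen both $\geq\delta$ and congruent to $\rho$ in an interval $[\mu(\widehat a),\nu(\widehat a)]$ whose upper end is pushed to $+\infty$ by the hypothesis $\bar\nu_{|Y}=+\infty$ and the large continuity of $\nu$. Your use of Proposition~\ref{pr:prol-pi} to obtain the factorisations $\mu=\bar\mu_{|Y}\circ\pi_{\widehat J}$, $\nu=\bar\nu_{|Y}\circ\pi_{\widehat J}$ and the dichotomy (affine maps on a face are either constantly $+\infty$ or everywhere finite) matches the paper, and the inductive closing argument for the precell structure is as intended. In short, the route is the same, but your write-up makes explicit and justifies a step the paper leaves at the level of an assertion.
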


\begin{remark}\label{re:mono-cell}
  Combining the last point of the above result with
  Remark~\ref{re:sub-mono}, we get that if $A$ is a {\em monohedral}
  largely continuous precell mod $N$ in $\Gamma^m$ then so is every face of $A$. 
\end{remark}

\begin{proof}
Let $X$ be the socle of $A$. Recall that $Y=F_{\widehat{J}}(X)$ is a
face of $X$ and of the socle of $F_J(A)$ by
Proposition~\ref{pr:face-egale-proj}(\ref{it:face-proj}). Let $B$ be
the set of $a\in F_J(\Gamma^m)$ such that $\widehat{a}\in
F_{\widehat{J}}(\widehat{A})=Y$, $\bar\mu(\widehat{a})\leq a_m\leq
\bar\nu(\widehat{a})$ and $a_m\equiv\rho\;[N_m]$. Non-strict inequalities and
congruence relations valid on $A$ pass to the limits, hence remain
valid on $F_J(A)$. So $B\subseteq F_J(A)$, and if $F_J(A)\neq\emptyset$ then necessarily
one of the two alternatives of the first point hold true. 

Conversely, take any point $a\in A$ and let $b=\pi_J(a)$. Assume first
that $m\in J$ and $\bar\mu<+\infty$. By Proposition~\ref{pr:prol-pi},
$\bar\mu\circ\pi_J=\mu$ on $X$ hence $\bar\mu(\widehat{b})=\mu(\widehat{a})\leq
a_m=b_m$. If  $\bar\nu<+\infty$ then similarly $b_m\leq\bar\nu(\widehat{b})$.
Otherwise $\bar\nu=+\infty$ and $b_m\leq\bar\nu(\widehat{b})$ is obvious. Since
$b_m=a_m\equiv\rho\;[N]$ it follows in both cases that $b\in B$. Now assume
that $m\notin J$ and $\bar\nu=+\infty$. Then $b_m=+\infty$, hence obviously
$\bar\mu(\widehat{b})\leq +\infty=b_m=\bar\nu(\widehat{b})$ and $b_m=+\infty\equiv\rho\;[N]$.
Thus $b\in B$, which proves that $\pi_J(A)\subseteq B$. In particular $B\neq\emptyset$,
hence $F_J(A)\neq\emptyset$ since it contains $B$. This proves the first point.
Moreover by Proposition~\ref{pr:face-egale-proj}(\ref{it:face-proj})
it follows that $F_J(A)=\pi_J(A)\subseteq B$. Hence $F_J(A)=B$, which proves the
second point. In particular $F_J(A)$ is a largely continuous precell if
$F_{\widehat{J}}(Y)$ is so. The remaining of the conclusion then
follows by a straightforward induction. 
\end{proof}

\section{Bounding functions}
\label{se:bound-fun}

We prove here two technical results (Propositions~\ref{pr:maj-Z-aff}
and \ref{pr:min-Z-aff}) used in the next section. 

\begin{proposition}\label{pr:maj-Z-aff}
  Let $A\subseteq F_I(\Gamma^m)$ be a definable set, and $f_1,\dots,f_r$ be definable
  maps from $A$ to $\cQ$. Assume that the coordinates of all the
  points of $A$ are non-negative. Then there exists a largely continuous,
  positive, integrally affine map $f:A\to\cZ$ such that $f(x)\geq
  \max_j f_j(x)$ on $A$ and $\bar f=+\infty$ on $\partial A$. More precisely $f$
  can be taken of the form $f(x)=\beta+\alpha\sum_{i\in I}x_i$ on $A$, for some
  positive $\alpha\in\ZZ$ and $\beta\in\cZ$.
\end{proposition}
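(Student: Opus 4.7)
The plan is to produce a single integrally linear form $f(x)=\beta+\alpha\sum_{i\in I}x_i$ with positive integer coefficient $\alpha$ and sufficiently large constant term $\beta\in\cZ$, chosen so as to majorise each $f_j$ uniformly on $A$. Non-negativity of the coordinates on $A$, together with positivity of $\alpha$ and $\beta$, will then automatically deliver positivity of $f$ and the boundary condition $\bar f=+\infty$ on $\partial A$.

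First, I would reduce to piecewise-affine data via Cluckers' theorem (Theorem~\ref{th:cluck-piece-lin}). Pick $N\geq 1$ such that each $Nf_j$ takes values in $\cZ$ and is $\cL_{Pres}$-definable. Applying the theorem to each $Nf_j$ and intersecting the resulting partitions gives a finite definable partition $A=A_1\sqcup\dots\sqcup A_s$ on which every $f_j$ is simultaneously affine:
\[
f_j(x)=\alpha^{j,k}_0+\sum_{i\in I}\alpha^{j,k}_i\,x_i \qquad\text{on } A_k,
\]
with finitely many coefficients $\alpha^{j,k}_0\in\cQ$ and $\alpha^{j,k}_i\in\QQ$.

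Next I would choose $\alpha\in\NN^*$ strictly larger than all the (finitely many) $|\alpha^{j,k}_i|$, and $\beta\in\cZ$ with $\beta\geq 1$ and $\beta\geq\lceil\alpha^{j,k}_0\rceil$ for every $j,k$ (using that $\lceil\cdot\rceil$ lands in $\cZ$). Set $f(x)=\beta+\alpha\sum_{i\in I}x_i$. This is integrally affine, and positive since $x_i\geq 0$ and $\beta\geq 1$. For each $j,k$ and $x\in A_k$, the bound $\alpha\geq|\alpha^{j,k}_i|$ combined with $x_i\geq 0$ gives $\alpha^{j,k}_i\,x_i\leq\alpha\,x_i$ regardless of the sign of $\alpha^{j,k}_i$, so $f_j(x)\leq\beta+\alpha\sum_i x_i=f(x)$.

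Finally, $f$ extends continuously to $\overline{A}$ by the same formula, using the convention $\alpha\cdot(+\infty)=+\infty$. By the observation in Section~\ref{se:notation} that $\partial A$ is the disjoint union of the proper faces of $A$, every $b\in\partial A$ has support $J\subsetneq I$, hence satisfies $b_i=+\infty$ for some $i\in I\setminus J$; since $\alpha>0$ this forces $\bar f(b)=+\infty$. The substantive step is really Cluckers' piecewise-affinity theorem; beyond that, the proof is elementary coefficient bookkeeping exploiting the sign constraint on the coordinates, so I anticipate no serious obstacle.
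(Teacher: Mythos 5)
Your proof is correct and follows essentially the same route as the paper: reduce via Cluckers' piecewise-affinity theorem (Theorem~\ref{th:cluck-piece-lin}) to finitely many affine pieces, then take $\alpha$ to dominate all linear coefficients and $\beta$ to dominate all constant terms, and observe that non-negativity of the coordinates and positivity of $\alpha$ force both $f\geq\max_j f_j$ and $\bar f=+\infty$ on $\partial A$. The only cosmetic difference is that you bound $\alpha$ by $|\alpha^{j,k}_i|$ where $\alpha\geq\alpha^{j,k}_i$ already suffices (since $x_i\geq 0$), and you spell out the boundary check and the passage through the partition a bit more explicitly than the paper does.
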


\begin{proof}
Without lost of generality we can assume that $I=[\![1,m]\!]$.
Because of Theorem~\ref{th:cluck-piece-lin}, it suffices to consider
the case where the $f_j$'s
are affine. Each $f_j$ then can be written as
\begin{displaymath}
  f_j(x)=\alpha_{0,j}+\sum_{1\leq i\leq m} \alpha_{i,j}x_i
\end{displaymath}
for some $\alpha_{i,j}\in\ZZ$ for $i\geq1$ and some $\alpha_{0,j}\in\cZ$. Let $\alpha\geq1$ be an
integer greater than $\alpha_{i,j}$ for every $i,j\geq1$, and $\beta\geq1$ an element
of $\cZ$ greater the $\alpha_{0,j}$ for every $j\geq1$. For every $x$ in $A$
and every $i,j\geq1$, since $x_i\geq0$ we have $\alpha x_i\geq \alpha_{i,j}x_i$. So the
function $f(x)=\beta+\alpha\sum_{1\leq i\leq m} x_i$ has all the required properties. 
\end{proof}

\begin{lemma}\label{le:f-hat-C0bar}
  Let $A\subseteq\cZ^m$ be a largely continuous precell mod $N$, $X$ its socle,
  $(\mu,\nu,\rho)$ a largely continuous presentation of $A$ and $f$ a largely
  continuous affine map on $A$ such that $\bar f=+\infty$ on $\partial A$. Let
  $(\alpha_i)_{1\leq i\leq m}\in\QQ^m$ and $\beta\in\cQ$ be such that $f(a)=\beta+\sum_{1\leq i\leq
  m}\alpha_i a_i$ on $A$. Extend $f$ to $\cQ^m$ by means of this expression.
  For every $x\in\widehat{A}$ let $\hat f(x)=f(x,\mu(x))$ if $\alpha_m\geq0$, and
  $\hat f(x)=f(x,\nu(x))$ otherwise. Then $\hat f$ is a well-defined
  largely continuous affine map on $X$ with limit $+\infty$ at every point of $\partial
  X$, and $\min f(A)-|\alpha_m|N_m\leq \hat f(\widehat{a})\leq f(a)$ for every $a\in A$. 
\end{lemma}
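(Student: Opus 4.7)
The plan is to compare $\hat f(x)$ to $f(a_x)$ for a carefully chosen auxiliary point $a_x \in A$ with socle $x$, whose $m$-th coordinate is an integer ``rounding'' of $\mu(x)$ or $\nu(x)$ according to the sign of $\alpha_m$. First I would verify well-definedness. Since $A \subseteq \cZ^m$, the constraint $\mu(\widehat{a}) \leq a_m \in \cZ$ forces $\mu < +\infty$ on $X = \widehat{A}$. If $\alpha_m < 0$, I also need $\nu < +\infty$ on $X$: otherwise $A$ has an upward face on which $a_m = +\infty$, and there the dominating term $\alpha_m a_m$ would tend to $-\infty$, contradicting $\bar f = +\infty$ on $\partial A$. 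Thus $\hat f$ is well-defined and finite on $X$, and is affine by substituting the affine map $\mu$ (or $\nu$) into the affine expression for $f$.

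For the bounds, define $\mu_\rho(x)$ to be the smallest $t \in \cZ$ with $t \geq \mu(x)$ and $t \equiv \rho\;[N_m]$, and $\nu_\rho(x)$ the largest $t \in \cZ$ with $t \leq \nu(x)$ and $t \equiv \rho\;[N_m]$; both lie within $N_m$ of $\mu(x)$ and $\nu(x)$ respectively, and both are valid values of $a_m$ for a point of $A$ with socle $x$ (the set of valid $a_m$ being non-empty since $x \in \widehat A$). Set $a_x = (x, \mu_\rho(x)) \in A$ if $\alpha_m \geq 0$ and $a_x = (x, \nu_\rho(x)) \in A$ otherwise. A direct computation gives
\[
f(a_x) - \hat f(x) \,=\, \alpha_m(\mu_\rho(x) - \mu(x)) \ \text{or}\ \alpha_m(\nu_\rho(x) - \nu(x)),
\]
a non-negative quantity bounded above by $|\alpha_m| N_m$ in both cases. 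The inequality $\hat f(\widehat a) \leq f(a)$ for every $a \in A$ follows from the same sign analysis applied to $f(a) - \hat f(\widehat a) = \alpha_m(a_m - \mu(\widehat a))$ or $\alpha_m(a_m - \nu(\widehat a))$. The lower bound $\hat f(\widehat a) \geq \min f(A) - |\alpha_m| N_m$ then follows since $f$ attains its minimum on $A$ by Lemma~\ref{le:precomp-borne}.

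It remains to check $\bar{\hat f} = +\infty$ on $\partial X$, which will also complete the proof of large continuity. For $y \in \partial X$ and $x \to y$ in $X$, the auxiliary point $a_x$ has socle $x \to y$, so any accumulation point $a^*$ of $\{a_x\}$ in $\Omega^m$ has socle $y \notin X$, hence $a^* \in \overline A \setminus A \subseteq \partial A$; consequently $\bar f(a^*) = +\infty$. Combined with $\hat f(x) \geq f(a_x) - |\alpha_m| N_m$, this forces $\hat f(x) \to +\infty$. The main technical subtlety here is that $\mu_\rho(x)$ (or $\nu_\rho(x)$) need not converge as $x \to y$: when $\bar \mu(y)$ (resp.\ $\bar \nu(y)$) is finite it may oscillate within a bounded subset of the discrete set $\cZ$. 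This is handled by passing to subnets along which $\mu_\rho$ (or $\nu_\rho$) stabilizes, noting that along every such subnet one reaches the same conclusion $\hat f \to +\infty$.
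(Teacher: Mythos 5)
Your proof takes the same approach as the paper's: choose an auxiliary point $a_x\in A$ with socle $x$ by rounding $\mu(x)$ (or $\nu(x)$) to an admissible value of the last coordinate lying within $N_m$ of it, derive both bounds from the affine expression of $f$, and deduce the limit $+\infty$ at $\partial X$ from the limit $+\infty$ of $\bar f$ on $\partial A$. Your $a_x$ is precisely the paper's $(x,\delta(x))$.

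The only divergence is in the final limit step, and it contains a flaw that happens to be harmless. You claim $\mu_\rho(x)$ ``need not converge'' and ``may oscillate within a bounded subset of the discrete set $\cZ$,'' and you propose a subnet workaround. But the oscillation you worry about never occurs: $\mu$ is definable, so $N\mu$ has values in $\cZ$ for some integer $N\geq1$, i.e.\ $\mu$ has range in the discrete subset $\frac{1}{N}\cZ$ of $\cQ$; hence when $\bar\mu(y)<+\infty$ the convergence $\mu(x)\to\bar\mu(y)$ forces $\mu(x)=\bar\mu(y)$ for all $x\in X$ close enough to $y$, and then $\mu_\rho(x)$ --- which is a function of $\mu(x)$, $\rho$, $N_m$ only --- is eventually constant. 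This is exactly the observation the paper makes, and it gives direct convergence $a_x\to(y,l)\in\partial A$. Your fallback is also not rigorous as stated: when $\cZ\neq\ZZ$ the space $\Omega^m$ is not compact, so accumulation points of $\{a_x\}$ need not exist, and a bounded subset of a general $\ZZ$-group can be infinite, so the existence of a subnet along which $\mu_\rho$ stabilizes is not automatic without first establishing the eventual constancy above. Since that constancy holds, the subnet detour can simply be deleted, and with that the proof is correct and matches the paper's.
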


\begin{proof}
The only possible problem in the definition of $\hat f$ is when
$\nu=+\infty$. But then $\alpha_m\geq0$ because otherwise, given any $x\in X$ we have
$(x,+\infty)\in\partial A$ and $f(a)<0$ for every $x\in A$ close enough to $(x,+\infty)$, a
contradiction since $\bar f=+\infty$ on $\partial A$. Thus $\hat f(x,\mu(x))$ is
well-defined in this case too.

Let $\lambda=\mu$ if $\alpha_m\geq0$ and $\lambda=\nu$ otherwise. Then $\hat f(x)=f(x,\lambda(x))$
is an affine map and $\alpha_m(a_m-\lambda(\widehat{a}))$ is non-negative on $A$ by
construction, hence
\begin{displaymath}
  f(a) = f\big(\widehat{a},\lambda(\widehat{a})\big)
         + \alpha_m\big(a_m-\lambda(\widehat{a})\big)
         \geq \hat f(\widehat{a}). 
\end{displaymath}

For every $x\in X$ there is a point $a\in A$ such that
$\widehat{a}=x$ and $|a_m-\lambda(x)|\leq N_m$. So there is a definable
function $\delta:X\to\cZ$ such that $(x,\delta(x))\in A$ and
$|\delta(x)-\lambda(x)|\leq N_m$ for every $x\in X$. We have
\begin{eqnarray*}
  f(x,\lambda(x))&=& f\big(x,\delta(x)\big)+\alpha_m\big(\lambda(x)-\delta(x)\big) \\
  &\geq& f\big(x,\delta(x)\big)-|\alpha_m|N_m \label{eq:ineg-f-crochet}
\end{eqnarray*}
In particular $\hat f(x)\geq \min f(A)-\alpha_mN_m$ on $X$. 

It only remains to check that, given any $y\in \partial X$, $f(x,\lambda(x))$ tends
to $+\infty$ when $x\in A$ tends to $y$. By the above inequality it suffices
to prove that $f(x,\delta(x))$ tends to $+\infty$ when $x\in A$ tends to $y$.
Since $(x,\delta(x))\in A$ for every $x\in X$ and $\bar f=+\infty$ on $\partial A$, it
is sufficient to show that $\delta(x)$ tends to a limit $l\in\Gamma$ as $x\in X$
tends to $y$. Indeed, since $y\in\partial X$ we will then have that $(x,\delta(x))$
tends to $(y,l)\in\partial A$ so the conclusion. We prove it only when $\alpha_m\geq0$,
the case where $\alpha_m<0$ being similar. 

If $\bar\mu(y)=+\infty$, then obviously $\delta(x)$ tends to $+\infty$ since
$\mu(x)\leq\delta(x)$. If $\bar\mu(y)<+\infty$ then $\mu(x)=\bar\mu(y)$ for every $x\in X$
close enough to $y$. Hence $\delta(x)$, which is the smallest element $t$ in
$\Gamma$ such that $\mu(x)\leq t$ and $t\equiv\rho\;[N_m]$, remains constant too. In
particular it has a limit in $\cZ$ as $x\in X$ tends to $y$. 
\end{proof}

\begin{proposition}\label{pr:min-Z-aff}
  Let $A\subseteq F_I(\Gamma^m)$ be a largely continuous precell mod $N$, and
  $f_1,\dots,f_r$ be largely continuous affine maps on $A$ such
  that $\overline{f_j}=+\infty$ on $\partial A$ for every $j$. Then there exists a
  largely continuous affine map $f$ on $A$ such that
  $\overline{f}=+\infty$ on $\partial A$ and $f(x) \leq \min_j f_j(x)$ for every $x\in
  A$. If, moreover, each $f_j$ is positive on $A$ then $f$
  can be chosen positive on $A$. 
\end{proposition}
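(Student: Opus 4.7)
The plan is induction on $m$. When $m = 0$, $A$ is a single point, $\partial A = \emptyset$, and every affine map is a constant in $\cQ$, so $f = \min_j f_j$ works and is positive if each $f_j$ is.

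For $m \geq 1$, fix a largely continuous presentation $(\mu,\nu,\rho)$ of $A$ with socle $X = \widehat{A}$, and write $f_j(a) = \beta_j + \sum_{i\in I} \alpha_{i,j} a_i$. First apply Lemma~\ref{le:f-hat-C0bar} to each $f_j$ to produce a largely continuous affine map $\hat{f_j}$ on $X$ with $\overline{\hat{f_j}} = +\infty$ on $\partial X$ and $\hat{f_j}(\widehat{a}) \leq f_j(a)$ on $A$. Apply the induction hypothesis to $(\hat{f_j})_j$ on $X$ to get a largely continuous affine $g$ on $X$ with $\bar g = +\infty$ on $\partial X$ and $g \leq \min_j \hat{f_j}$. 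Choose $c = \min_j \alpha_{m,j}$ if that minimum is strictly positive, and $c = 0$ otherwise, and define
\[
f(a) = g(\widehat{a}) + c\bigl(a_m - \mu(\widehat{a})\bigr).
\]
Since $g - c\mu$ is affine on $X$, $f$ is affine on $A$, and largely continuous because its ingredients are.

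To check $f \leq f_j$: if $\alpha_{m,j} \geq 0$, Lemma~\ref{le:f-hat-C0bar} gives $f_j(a) - \hat{f_j}(\widehat{a}) = \alpha_{m,j}(a_m - \mu(\widehat{a}))$, so $f_j - f = (\hat{f_j} - g)(\widehat{a}) + (\alpha_{m,j} - c)(a_m - \mu(\widehat{a})) \geq 0$ since $c \leq \alpha_{m,j}$ by the choice of $c$; if $\alpha_{m,j} < 0$, then $c = 0$, and Lemma~\ref{le:f-hat-C0bar} gives $f_j - f = (\hat{f_j} - g)(\widehat{a}) + \alpha_{m,j}(a_m - \nu(\widehat{a})) \geq 0$. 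To check $\bar f = +\infty$ on $\partial A$, use Proposition~\ref{pr:pres-face}: given $b \in \partial A$ of support $J \subsetneq I$, if $m \in J$ then $\widehat{J} \subsetneq \widehat{I}$ forces $\widehat{b} \in \partial X$ and $\bar\mu(\widehat{b}) < +\infty$, so $\bar g(\widehat{b}) = +\infty$ dominates the finite $c(b_m - \bar\mu(\widehat{b}))$; if $m \notin J$ with $\widehat{b} \in \partial X$, then $f(a) \geq g(\widehat{a}) \to +\infty$ (the extra term being non-negative); finally if $m \notin J$ with $\widehat{b} \in X$, then evaluating $f_j(\widehat{b}, a_m) = \beta_j + \sum_{i<m}\alpha_{i,j}\widehat{b}_i + \alpha_{m,j} a_m$ along $a_m \to +\infty$ forces $\alpha_{m,j} > 0$ for every $j$ (otherwise $\overline{f_j}(b)$ would be finite, contradicting the hypothesis), hence $c > 0$ and $c a_m \to +\infty$ while the remaining terms stay bounded.

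For the positivity clause, apply the above to get a (possibly non-positive) $f_0$. By Lemma~\ref{le:precomp-borne}, $\varepsilon := \min_j \min_A f_j > 0$ and $m_0 := \min_A f_0 \in \cQ$ are attained. Choose a rational $\lambda \in (0,1]$ small enough that $\lambda m_0 + (1-\lambda)\varepsilon/2 > 0$ (trivial if $m_0 \geq 0$, otherwise take $\lambda < \varepsilon/(2|m_0|+\varepsilon)$) and set $f = \lambda f_0 + (1-\lambda)\varepsilon/2$; this is still largely continuous affine with $\bar f = +\infty$ on $\partial A$, bounded by $\lambda f_j + (1-\lambda)\varepsilon/2 \leq f_j$ (using $\varepsilon/2 \leq f_j$), and positive by construction. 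The main obstacle is the boundary case $m \notin J$, $\widehat{b} \in X$: this is the only place where a nonzero $c$ is needed, and it is precisely in this configuration that the hypothesis $\overline{f_j} = +\infty$ on $\partial A$ forces all $\alpha_{m,j}$ to be strictly positive, making the choice $c > 0$ available exactly when it is required.
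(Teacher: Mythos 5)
Your construction for the main (non-positivity) part is correct and is in substance the same as the paper's: descend to the socle via Lemma~\ref{le:f-hat-C0bar}, apply the induction hypothesis to get $g$ on $X$, and inflate by $c(a_m-\mu(\widehat a))$. Your single unified choice of $c$ replaces the paper's explicit split into the cases $\nu=+\infty$ and $\nu<+\infty$, and your boundary analysis (showing that the offending case $m\notin J$, $\widehat b\in X$ forces every $\alpha_{m,j}>0$, hence $c>0$) is a clean way to justify it. You omit the reduction to $A\subseteq\cZ^m$ that the paper does first, which is needed before you can speak of $\alpha_{m,j}$ at all, but that is a cosmetic point.

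The positivity clause, however, has a genuine gap, and it is precisely where your approach diverges from the paper's. You first build $f_0$ without any quantitative control on $\min_A f_0$, and then try to rescue positivity by setting $f=\lambda f_0+(1-\lambda)\varepsilon/2$ for a small rational $\lambda$. The problem is that $\cQ$ is only the divisible hull of a $\ZZ$\--group $\cZ$, not an ordered field: the expression $\varepsilon/(2|m_0|+\varepsilon)$ has no meaning in $\cQ$, and more seriously, when $\cZ$ is non-Archimedean two positive elements of $\cQ$ need not be commensurable. If $m_0<0$ with $|m_0|>n\varepsilon$ for every $n\in\NN$, then $\lambda m_0+(1-\lambda)\varepsilon/2<0$ for \emph{every} rational $\lambda\in(0,1]$, and no choice of $\lambda$ works. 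Nothing you have written rules this out. The paper avoids this by strengthening the inductive statement: it proves, for an arbitrary $\gamma'<\gamma$ that may be taken in $\QQ$, the existence of $f$ with $\gamma'\leq f\leq\min_jf_j$, and it keeps an explicit rational lower bound $\eta'=-(\max_j|\alpha_{m,j}|N_m+1)$ on the function produced at each inductive level. With that, the final rescaling $f=(\gamma'/\eta')G$ is legitimate because $\gamma'$ and $\eta'$ are both in $\QQ$, where division makes sense. Your argument could be repaired by tracking, through the recursion, the bound $\hat f_j\geq\min f_j(A)-|\alpha_{m,j}|N_m$ supplied by Lemma~\ref{le:f-hat-C0bar}, which gives $\min_A f_0\geq\varepsilon-R$ for some $R\in\QQ$; together with the observation that $\cQ$ has no infinitesimals relative to $\QQ$ (every positive $\varepsilon\in\cQ$ dominates some $1/n$), this would make your rational $\lambda$ exist. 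But as written, the existence of $\lambda$ is simply asserted, and in the general $\ZZ$\--group setting that assertion is false without the missing control on $m_0$.
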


\begin{proof}
W.l.o.g. we can assume that $A\subseteq \cZ^m$ and $f_j<+\infty$ for every $j$. By
Lemma~\ref{le:precomp-borne} there is $\gamma\in\cQ$ such that
$\gamma=\min\bigcup_jf_j(A)$. Given an arbitrary $\gamma'<\gamma$ in $\cQ$ we are going to
show that there exists a largely continuous map $f:A\to\cQ$ such that
$\bar f=+\infty$ on $\partial A$ and $\gamma'\leq f(x)\leq\min_j f_j(x)$ on $A$. This will
prove simultaneously the two statements, because if each $f_j$ is
positive then $\gamma>0$ hence taking for example $\gamma'=\gamma/2$ will give
that $0<\gamma/2\leq f$ on $A$.

The proof goes, needless to say, by induction on $m$. If $m=0$, and
more generally if $A$ is closed, the constant function $f=\gamma$ has the
required properties. So we can assume that $A$ is not closed, $m\geq1$
and the result is proven for smaller integers. Replacing each $f_j$ by
$f_j-\gamma$ we can assume that $\gamma=0$. Replacing $\gamma'<0$ by a bigger one if
necessary we can assume that $\gamma'\in\QQ$.

Let $\alpha_{i,j}\in\QQ$ and $\beta_j\in\cQ$ such that $f_j(x)=\beta_j+\sum_{1\leq i\leq
m}\alpha_{i,j}x_i$. Let $\hat f_j:X\to\cQ$ be defined as in
Lemma~\ref{le:f-hat-C0bar}, and $\eta=\min\bigcup_j\hat f_j(X)$. By
Lemma~\ref{le:f-hat-C0bar} the induction hypothesis applies to these
functions. Given any $\eta'<\eta$, it gives a largely continuous affine map
$g:X\to\cQ$ such that $\bar g=+\infty$ on $\partial X$ and $\eta'\leq g(x)\leq\hat f_j(x)$ on
$X$ for $1\leq j\leq r$. We do this for $\eta'=-(\max_j|\alpha_{m,j}|N_m+1)$. Indeed
by Lemma~\ref{le:f-hat-C0bar}, $-|\alpha_{m,j}|N_m\leq\hat f_j$ on $X$ for $1\leq
j\leq r$ hence $\eta'\leq\eta-1<\eta$. Since $\eta'<0$, replacing $\gamma'$ by a bigger one if necessary we
can assume that $\eta'\leq\gamma'$. 

\paragraph{Case 1:} $\nu_A=+\infty$. Then for $1\leq j\leq r$ the coefficient
$\alpha_{m,j}$ of $x_m$ in the above expression of $f_j$ is 
positive (see the proof of lemma~\ref{le:f-hat-C0bar}), hence $\hat
f_j(x)=f_j(x,\mu(x))$ and $\alpha=\min_{j\leq r}\alpha_{m,j}$ is positive.
Let $G(a)=g(\widehat{a})+\alpha(a_m-\mu(\widehat{a}))$ on $A$. For $1\leq j\leq r$
we have
\begin{displaymath}
  G(a) \leq \hat f_j(\widehat{a}) + \alpha_{m,j}\big(a_m-\mu(\widehat{a})\big)
       = f_j\big(\widehat{a},\mu(\widehat{a})\big)
         + \alpha_{m,j}\big(a_m-\mu(\widehat{a})\big)
       = f_j(a).
\end{displaymath}
Every $b\in\partial A$ either belongs to $X\times\{+\infty\}$ or to $\partial X\times\Gamma$. If $b\in X\times\{+\infty\}$
then $G(a)=g(\widehat{a})+\alpha(a_m-\mu(\widehat{a}))$ tends to $+\infty$ as $a\in
A$ tends to $b$, because $\widehat{a}$ then tends to $\widehat{b}$,
$a_m$ tends to $+\infty$ and $\alpha>0$. If $b\in \partial X\times\Gamma$ then $G(a)\geq
g(\widehat{a})$ tends to $+\infty$ as $a\in A$ tends to $b$, because
$\widehat{a}$ then tends to $\widehat{b}$. Hence $G$ is largely
continuous and $\bar G=+\infty$ on $\partial A$. 

\paragraph{Case 2:} $\nu_A<+\infty$. Then  every $b\in \partial A$ belongs to $\partial X\times\Gamma$
hence $g(\widehat{a})$ tends to $+\infty$ as $a\in A$ tends to $b$. Moreover
$g(\widehat{a})\leq\hat f_j(\widehat{a})\leq f_j(a)$ for $1\leq j\leq r$. 

\paragraph{Cases 1 and 2:} In both cases, it remains to modify $G$ so
that its minimum becomes greater than $\gamma'$. By construction $G(a)\geq
g(\widehat{a})\geq \eta'$ on $A$. Recall that $\eta'=-(\max_j|\alpha_{m,j}|N_m+1)$
and $\gamma'\geq \eta'$ are strictly negative {\em rational} numbers. Thus we
can define $f(a)=(\gamma'/\eta') G(a)$ on $A$. Clearly $f$ is a largely
continuous affine function on $A$ with $\bar f=+\infty$ on $\partial A$, and $f\geq
(\gamma'/\eta')\eta'=\gamma'$ since $\gamma'/\eta'\geq0$ and $G\geq\eta'$ on $A$.  Moreover $0\leq\gamma'/\eta'\leq
1$ hence for every $a\in A$:
\begin{displaymath}
  f(a)=\frac{\gamma'}{\eta'}G(a)\leq\max(0,G(a))\leq\min_{1\leq j\leq r}f_j(a)
\end{displaymath}
\end{proof}

\section{Monohedral division}
\label{se:division}

{\modif

The next lemma is the technical heart of this paper. Loosely speaking,
given a precell $A\subseteq\Gamma^m$, a facet $B$ of $A$, a function $f:B\to\cZ$ and
a family $\cD$ of monohedral precells covering $B$, we are going to
inflate each $D$ in $\cD$ to a precell $C_D\subseteq A$ in such a way that: 
\begin{enumerate}
  \item 
    $C_D$ is a monohedral precell with facet $D$;
  \item
    the shape of $C_D$ is controlled by $f$, in the sense that the
    distance to $B$ of any point $a\in C_D$ is less than $f(b)$ (where
    $b$ is the projection of $a$ onto $B$);
  \item 
    $C_D$ contains a ``neighbourhood of $D$'', so to say, in the sense
    that every point of $A$ close enough to $D$ belongs to $C_D$ (the
    ``close enough'' condition will be controlled by a function
    $\delta:B\to\cZ$);
  \item 
    the various $C_D$'s do not intersect too much (in particular if
    $\cD$ is a partition of $B$, we require the various precells $C_D$ to
    be pairwise disjoint).
\end{enumerate}
In addition we construct simultaneously a family $\cU$ of precells
partitioning the complement of $\bigcup_{D\in\cD}C_D$ in $A$, such that the
proper faces of every $U\in\cU$ are proper faces of $A$ different from
$B$. In particular $U$ has less faces than $A$, which will make
possible to repeatedly use the next lemma (first applied to $A$, then
to each $U\in\cU$) while proving results by induction on the number of faces.

}

\begin{lemma}\label{le:face-elargie}
  Let $A\subseteq F_I(\Gamma^m)$ be a non-closed largely continuous precell
  mod $N$. Let $B$ be a facet of
  $A$, $J$ its support, $f:B\to\cZ$ a definable map. Let $\cD$ be a
  family of largely continuous monohedral precells mod $N$ such that
  $\bigcup\cD= B$. Then there exists a pair $(\cC,\cU)$ of families of
  largely continuous precells mod $N$ contained in $A$ and an integrally
  affine map $\delta:B\to\cZ$ such that $\cU$ is a finite partition of
  $A\setminus\bigcup\cC$, the proper faces of every precell in $\cU$ are proper faces
  of $A$, and $\cC$ is a family $(C_D)_{D\in\cD}$ of precells with the
  following properties:
  \begin{description}
    \item
      [(Fac)] $C_D$ has a unique facet which is $D$.
    \item
      [(Sub)] $C_D\subseteq\{a\in A\tq \pi_J(a)\in D$ and $\Delta_J(a)\geq f\circ\pi_J(a)\}$.
    \item
      [(Sup)] $C_D\supseteq\{a\in A\tq \pi_J(a)\in D$ and $\Delta_J(a)\geq \delta\circ\pi_J(a)\}$.
    \item
      [(Diff)] For every $E\in\cD$, $\pi_J(C_D\setminus C_E)\subseteq D\setminus E$.
  \end{description}
\end{lemma}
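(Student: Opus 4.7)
The plan is to argue by induction on $m$. Fix a largely continuous presentation $(\mu,\nu,\rho)$ of $A$ on its socle $X=\widehat A$, and set $J=\Supp B$, $Y=F_{\widehat J}(X)$. By Proposition~\ref{pr:pres-face}, $B$ admits the presentation $(\bar\mu|_Y,\bar\nu|_Y,\rho)$ and each $D\in\cD$ inherits a presentation with socle $\widehat D\subseteq Y$. By Corollary~\ref{co:socle-facet} exactly one of two dual situations occurs: either $m\notin J$, which forces $J=I\setminus\{m\}$, $Y=X$ and $\bar\nu=+\infty$ on $X$; or $m\in J$ with $Y$ a proper facet of $X$.

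\textbf{Upward facet ($m\notin J$).} Apply Proposition~\ref{pr:maj-Z-aff} to produce a positive, largely continuous, integrally affine map $g\colon X\to\cZ$ with $\bar g=+\infty$ on $\partial X$, dominating both $\mu$ and $\widehat a\mapsto f(\widehat a,+\infty)$, and set
\begin{displaymath}
C_D=\big\{a\in A:\widehat a\in\widehat D,\ a_m\geq g(\widehat a)\big\}.
\end{displaymath}
Since this condition depends on $a$ only through $\widehat a=\widehat{\pi_J(a)}$, (Diff) is immediate. Because precells are defined by non-strict inequalities and congruences they are closed in $\cZ^{m-1}$, hence $\partial\widehat D\subseteq\partial X$ and so $\bar g=+\infty$ on $\partial\widehat D$. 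By Proposition~\ref{pr:pres-face} this rules out every proper face of $C_D$ whose support contains $m$, leaving only the chain of faces inherited from the monohedrality of $\widehat D$, with top element $D$; this is (Fac). Conditions (Sub) and (Sup) are built into the choice of $g$, with $\delta(\widehat b,+\infty)=g(\widehat b)$.

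\textbf{Side facet ($m\in J$).} Here $Y$ is a proper facet of $X$ with support $\widehat J$. The inductive hypothesis applied to $(X,Y,\widehat{\cD},\widehat f)$ --- for a suitable definable $\widehat f$ derived from $f$ --- produces $(\widehat{\cC},\widehat{\cU},\widehat\delta)$ with the four properties. Lift each $\widehat C_D$ by
\begin{displaymath}
C_D=\big\{a\in A:\widehat a\in\widehat C_D,\ \mu(\widehat a)\leq a_m\leq\nu(\widehat a),\ a_m\equiv\rho\ [N_m]\big\}
\end{displaymath}
and lift $\widehat{\cU}$ analogously; obtain $\delta$ from $\widehat\delta$ via the presentation of $B$. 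Proposition~\ref{pr:pres-face} transfers the four properties from the inductive step to this lift.

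\textbf{The complement $\cU$ and the main obstacle.} In the upward case $\cU$ is built from the ``bottom slabs'' $U_D=\{a\in A:\widehat a\in\widehat D,\ \mu(\widehat a)\leq a_m<g(\widehat a),\ a_m\equiv\rho\}$, after a refinement that makes the $\widehat D$'s involved disjoint and together equal to $X$; because $\bar g=+\infty$ on $\partial X$, the faces of each $U_D$ that live ``at infinity in the $m$-th coordinate'' sit over $\partial X$, so they are faces of $A$ other than $B$. The main technical obstacle is reconciling (Fac) and (Diff): (Diff) forbids a $D$-dependent lower boundary, which forces the use of a single function $g$, while (Fac) requires this $g$ to tend to $+\infty$ on $\partial\widehat D$ for every $D$; the inclusion $\partial\widehat D\subseteq\partial X$ is precisely what reconciles the two demands. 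Propositions~\ref{pr:maj-Z-aff} and~\ref{pr:min-Z-aff} supply respectively the upper bound $g$ (hence ultimately $\delta$) and the integrally affine control needed to decompose the complement of $\bigcup\cC$ in $A$ into precells.
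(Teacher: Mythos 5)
Your ``upward facet'' construction is essentially the paper's Case~1, but your case analysis is incorrect and your ``side facet'' case skips the real difficulty.

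The claim that $m\notin J$ forces $J=I\setminus\{m\}$, $Y=X$ and $\bar\nu=+\infty$ is false. By Corollary~\ref{co:socle-facet} there is a third possibility, where $m\notin J$ but $Y$ is a proper facet of $X$ --- there $B=Y\times\{+\infty\}$, $\bar\mu=+\infty$ on $Y$, and $\nu<+\infty$ on $X$. For instance, $A\subseteq\ZZ^2$ with socle $X=\NN$, $\mu(x)=x$, $\nu(x)=2x$ has $B=F_\emptyset(A)=\{(+\infty,+\infty)\}$ as its unique facet, with $J=\emptyset\neq I\setminus\{2\}$ and socle $Y=\{+\infty\}\neq X$. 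In that situation your construction $C_D=\{a:\widehat{a}\in\widehat{D},\ a_m\geq g(\widehat{a})\}$ makes no sense, since $\widehat{D}\subseteq Y$ is disjoint from $X$, the range of $\widehat{a}$. The paper treats this situation separately (its Case~2) by applying the inductive hypothesis to $X$, $Y$, $\widehat{\cD}$ with a suitably corrected target function, and then cutting $A$ over the resulting sub-precells $S_{\widehat{D}}$ of $X$.

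More seriously, when $m\in J$ your lift does not control $a_m$, yet (Sub) requires $\Delta_{\widehat{J}}(\widehat{a})=\Delta_J(a)\geq f(\pi_{\widehat{J}}(\widehat{a}),a_m)$, and the right-hand side depends on $a_m$, which in your lift ranges over all of $[\mu(\widehat{a}),\nu(\widehat{a})]$. No choice of $\widehat{f}$ on $Y$ can absorb this dependence: if $\nu=+\infty$ and $f$ grows with its last coordinate, the lifted $C_D$ will violate (Sub). Moreover, when $\nu=+\infty$ the lift $C_D=(\widehat{C}_D\times\cZ)\cap A$ also acquires a second facet $\widehat{C}_D\times\{+\infty\}$, of support $I\setminus\{m\}$ incomparable to $J$, so (Fac) fails too. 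The paper resolves both problems in two sub-steps: for $\nu<+\infty$ it first restricts the socle to a sub-precell $X^\circ$ on which it builds a cutoff $\zeta=\mu+\lambda+N_m$ (with $\alpha\lambda\leq\Delta_{\widehat{J}}-g\circ\pi_{\widehat{J}}$) that truncates $A$ from above, and only then inflates each $D$ inside the truncated piece $A^\circ$; for $\nu=+\infty$ it first splits $A$ below a level $\lambda$ and discards the upper part into $\cU$ before falling back on the previous sub-case. Neither step appears in your sketch, so (Sub), (Fac) and (Diff) are not established in the hard half of the proof.
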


\begin{remark}\label{re:face-elargie}
  In every application of Lemma~\ref{le:face-elargie}, $\cD$ will be a
  partition of $B$. So the condition (Diff) simply says that the precells
  in $\cC$ are pairwise disjoint, hence that $\cC\cup\cU$ is a
  partition of $A$. However we can not restrict to this case because
  it may happen that $\cD$ is a partition of $B$ and $\widehat{\cD}$
  is not a partition of $\widehat{B}$, which will be crippling when
  proving the result by induction on $m$.
\end{remark}

Before entering in the somewhat intricate proof of this lemma, let us
make a few preliminary observations. 

\begin{claim}\label{cl:U-pas-B}
  With the notation of Lemma~\ref{le:face-elargie}, $B$ is not a face
  of any $U\in\cU$.
\end{claim}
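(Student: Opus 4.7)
The plan is to argue by contradiction: assume some $U\in\cU$ has $B$ as a face. Since every element of $B$ has support $J$, the only face of $U$ that could possibly equal $B$ is $F_J(U)$, so the assumption is $B=F_J(U)$. I will derive a contradiction by exhibiting a point of $U$ that is forced by (Sup) to lie in some $C_D$, whereas $\cU$ and $\bigcup\cC$ are disjoint (since $\cU$ partitions $A\setminus\bigcup\cC$).

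To produce this point, I will pick an arbitrary $b\in B$. Because $\bigcup\cD=B$, there is some $D\in\cD$ with $b\in D$. I then set $\delta_0=\delta(b)\in\cZ$, where $\delta:B\to\cZ$ is the integrally affine map furnished by the lemma. The hypothesis $B\subseteq\overline{U}$ together with $\Supp b=J$ gives via Fact~\ref{fa:delta-dist}(1) the existence of $a\in U$ such that $\pi_J(a)=b$ and $\Delta_J(a)\geq\delta_0$.

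This $a$ then satisfies $\pi_J(a)=b\in D$ and $\Delta_J(a)\geq\delta(b)=\delta\circ\pi_J(a)$, which is exactly the condition appearing on the right-hand side of (Sup). Hence $a\in C_D\subseteq\bigcup\cC$, contradicting $a\in U\subseteq A\setminus\bigcup\cC$. This finishes the argument.

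I do not expect any real obstacle here: the claim is essentially a bookkeeping consequence of (Sup) combined with the disjointness of $\cU$ from the family $\cC$, plus the standard topological characterisation of faces in Fact~\ref{fa:delta-dist}. The only mildly subtle point is the opening remark that $B$ being a face of $U$ forces $B=F_J(U)$, which uses the fact that a face of $U$ of support $K$ lies in $F_K(\Gamma^m)$ and that distinct $F_K(\Gamma^m)$'s are disjoint.
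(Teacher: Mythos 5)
Your argument is correct and is essentially the same as the paper's: both pick $b\in B$, choose $D\in\cD$ with $b\in D$, and use (Sup) together with the disjointness of $\cU$ from $\bigcup\cC$ to rule out points of $U$ near $b$. The only cosmetic difference is that the paper concludes directly that $B\cap\overline{U}=\emptyset$ (a slightly stronger statement), whereas you phrase the same idea as a proof by contradiction via Fact~\ref{fa:delta-dist}(1).
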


\begin{proof}
For every $b\in B$ there is $D\in\cD$ such that $b\in C_D$. By (Sup) every
point in $A$ such that $\pi_J(a)=b$ and $\Delta_J(a)\geq\delta\circ\pi_J(a)$ belongs to
$C_D$, hence not to $U$. Thus $b\notin\overline{U}$, that is
$B\cap\overline{U}=\emptyset$.
\end{proof}

\begin{claim}\label{cl:fac}
  Let $A\subseteq F_I(\Gamma^m)$ be a non-closed largely continuous precell mod
  $N$, $B$ a facet of $A$, $J=\Supp B$. Let $C_D$, $D$ be any precells
  mod $N$ contained in $A$, $B$ respectively, satisfying conditions
  (Sub) and (Sup) of Lemma~\ref{le:face-elargie} for some definable
  maps $f,\delta:B\to\cZ$. If $f$ is largely continuous and $\bar f=+\infty$ on $\partial
  B$ then property (Fac) of Lemma~\ref{le:face-elargie} follows: $C_D$
  has a unique facet which is $D$. 
\end{claim}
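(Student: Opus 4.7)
The plan is to establish two facts: (i) $F_J(C_D) = D$, so $D$ is a face of $C_D$, and (ii) any other non-empty proper face of $C_D$ has support strictly contained in $J$, hence specialises to $D$ by Proposition~\ref{pr:face-egale-proj}(\ref{it:mono-supp}); together these give that $D$ is the unique facet of $C_D$.

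For (i), the inclusion $F_J(C_D) \subseteq D$ is immediate: $C_D$ is a basic Presburger set, so Proposition~\ref{pr:face-egale-proj}(\ref{it:face-proj}) yields $F_J(C_D) = \pi_J(C_D)$, and the first half of (Sub) gives $\pi_J(C_D) \subseteq D$. For the reverse inclusion, given $b \in D \subseteq B = F_J(A)$, Proposition~\ref{pr:face-egale-proj}(\ref{it:face-proj}) applied to $A$ furnishes $a_0 \in A$ with $\pi_J(a_0) = b$, and Lemma~\ref{le:half-line} supplies, for every $\delta \in \cZ$, a vector $u$ pointing to $J$ with $\Delta_J(u) \geq \delta$ and $A + u \subseteq A$. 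Then $a_0 + u \in A$ projects onto $b$ and has $\Delta_J \geq \delta$; for $\delta \geq \delta(b)$, condition (Sup) places it in $C_D$. Letting $\delta \to +\infty$, these points converge to $b$ in $\Gamma^m$, giving $b \in F_J(C_D)$.

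For (ii), suppose $F_H(C_D)$ is non-empty with $H \subsetneq I$ and $H \neq J$; assume for contradiction that $H \not\subseteq J$, and split into two sub-cases. The easy one is $J \subsetneq H$: the facet assumption on $B = F_J(A)$, combined with Proposition~\ref{pr:face-egale-proj}(\ref{it:mono-supp}), forbids any non-empty proper face of $A$ strictly above $B$, so $F_H(A) = \emptyset$; since $\overline{C_D} \subseteq \overline{A}$ we have $F_H(C_D) \subseteq F_H(A) = \emptyset$, a contradiction.

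The main obstacle is the remaining sub-case, where $H$ and $J$ are incomparable, and this is where the hypothesis $\bar{f} = +\infty$ on $\partial B$ is essential. Pick $c \in F_H(C_D)$; for every $\delta \in \cZ$, Fact~\ref{fa:delta-dist} gives $a \in C_D$ with $\pi_H(a) = c$ and $\Delta_H(a) \geq \delta$, so $a_i = c_i$ for $i \in H$ and $a_i \geq \delta$ for $i \notin H$. The coordinates of $\pi_J(a)$ are then $c_i$ on $J \cap H$ and $\geq \delta$ on $J \setminus H$, so as $\delta \to +\infty$ they converge to the point $p \in \overline{D}$ whose support is $J \cap H \subsetneq J$ (using $J \not\subseteq H$); hence $p \notin B$, so $p \in \partial B$ and $\bar{f}(p) = +\infty$. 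Large continuity of $f$ forces $f(\pi_J(a)) \to +\infty$. But picking $i_0 \in H \setminus J$ (non-empty by $H \not\subseteq J$), $\Delta_J(a) \leq a_{i_0} = c_{i_0}$ is bounded independently of $\delta$, contradicting $\Delta_J(a) \geq f(\pi_J(a))$ from (Sub).
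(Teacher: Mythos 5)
Your proof is correct and follows essentially the same route as the paper's: establish $F_J(C_D)=D$ via (Sub), (Sup) and Proposition~\ref{pr:face-egale-proj}(\ref{it:face-proj}), then rule out a face $F_H(C_D)$ with $H\not\subseteq J$ by picking $k\in H\setminus J$, bounding $\Delta_J(a)\leq c_k$, and using $\bar f=+\infty$ on $\partial B$ together with (Sub) to get a contradiction. A small bonus of your write-up is that you explicitly dispose of the sub-case $J\subsetneq H$ (which is impossible since $B$ is a facet of $A$), a point the paper leaves implicit when it writes $F_{J\cap H}(B)\subseteq\partial B$.
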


\begin{proof}
  For every $b\in D$ and every $\varepsilon\in\cZ$, $b\in\overline{A}$ hence there
  exists $a\in A$ such that $\pi_J(a)=b$ and $\Delta_J(a)\geq\max(\delta(b),\varepsilon)$. By
  (Sup) this point $a$ belongs to $C_D$, hence $b$ is in the closure
  of $C_D$. So $D\subseteq F_J(C_D)$, and conversely (Sub) implies that
  $\pi_J(C_D)\subseteq D$, hence $F_J(C_D)=\pi_J(C_D)=D$ by
  Proposition~\ref{pr:face-egale-proj}(\ref{it:face-proj}). 
  
  Assume for a contradiction that $C_D$ has a proper face $F_H(C_D)$
  not contained in $\overline{D}$. Pick any $c$ in $F_H(C_D)$. By
  Proposition~\ref{pr:face-egale-proj}(\ref{it:mono-supp}), $H$ is not
  contained in $J$ so pick any $k\in H\setminus J$. By
  Proposition~\ref{pr:face-egale-proj}(\ref{it:face-lattice}), $F_{J\cap
  H}(C_D)\neq\emptyset$ hence by the remaining of
  Proposition~\ref{pr:face-egale-proj}, $\pi_{J\cap H}(C_D)=F_{J\cap
  H}(C_D)=F_{J\cap H}(F_H(C_D))\subseteq F_{J\cap H}(B)\subseteq \partial B$. So $\pi_{J\cap H}(c)\in \partial
  B$, hence $f$ has limit $+\infty$ at $\pi_{J\cap H}(c)$. In particular there
  is $\delta\in\cZ$ such that for every $b\in B$
  \begin{equation}
    \big[\pi_{J\cap H}(b)=\pi_{J\cap H}(c)\mbox{ and }\Delta_{J\cap H}(b)\geq \delta\big]
    \Rightarrow f(b)>c_k
    \label{eq:fac1}
  \end{equation}
  On the other hand $c\in F_H(C_D)$ hence there is $a\in C_D$ such that
  $\pi_H(a)=c$ and $\Delta_H(a)\geq\delta$. Let $b=\pi_J(a)$, then $\pi_{J\cap H}(b)=\pi_{J\cap
  H}(a)=c$ and
  \begin{displaymath}
    \Delta_{J\cap H}(b) = \min_{j\notin H}b_j = \min_{j\in J\setminus H}a_j 
    \geq \min_{i\notin H}a_i = \Delta_H(a)\geq\delta. 
  \end{displaymath}
  By (\ref{eq:fac1}) this implies that $f(b)>c_k$, that is
  $f\circ\pi_J(a)>c_k$. By (Sub) it follows that $\Delta_J(a)>c_k$, a
  contradiction since $\Delta_J(a)=\min_{j\notin J}a_j\leq a_k$ (because $k\notin J$)
  and $a_k=c_k$ (because $k\in H$ and $\pi_H(a)=c$. 
\end{proof}

\begin{proof}[(of Lemma~\ref{le:face-elargie})]
Let $(\mu,\nu,\rho)$ be a largely continuous presentation of $A$. Let $X$,
$Y$ be the socles of $A$, $B$ respectively, $\widehat{I}=\Supp X$ and
$\widehat{J}=\Supp Y$. Since $B$ is a facet of $A$, by
Proposition~\ref{pr:face-socle} either $Y=X$ and $B=X\times\{+\infty\}$, or $Y$ is
a facet of $A$ and either $B=Y\times\{+\infty\}$ or $B=(Y\times\cZ)\cap\overline{A}$. For
each $D\in\cD$ let $(\mu_D,\nu_D,\rho_D)$ be a largely continuous presentation
of $D$. 

If $m=0$ the result is trivially true because there is no non-closed
precell contained in $\Gamma^0$. So we can assume that $m\geq1$ and the result
is proven for smaller integers. If $\mu=+\infty$ then $A=X\times\{+\infty\}$ can be
identified with $X$, after which the result follows by the induction
hypothesis. So we can assume that $\mu<+\infty$. 
Proposition~\ref{pr:maj-Z-aff} gives positive $\alpha\in\ZZ$ and
$\beta\in\cZ$ such that $f(x)\leq\beta+\alpha\sum_{j\in J}x_j$ on $B$. Without loss of
generality we can assume that equality holds on $B$, and we still
denote by $f$ the corresponding extension of $f$ to $F_J(\Omega^m)$. 
In particular $f$ is now largely continuous on $B$ with $\bar f_B=+\infty$
on $\partial B$. 

{\modif
It is sufficient  to build a pair $((C_D)_{D\in\cD},\cU)$ of families of
largely continuous precells mod $N$ contained in $A$ and a definable
map $\delta:B\to\cQ$ such that $\cU$ is a finite partition of $A\setminus\bigcup\cC$, that
the proper faces of every precell in $\cU$ are proper faces of $A$,
and that for each $D$ in $\cD$ we have:
\begin{itemize}
  \item[(Fac')]
     $\pi_J(C_D)=D$;
  \item[(Sub')] 
    $\Delta_J\geq f\circ\pi_J$ on $C_D$;
  \item [(Sup)]
    $C_D\supseteq\{a\in A\tq \pi_J(a)\in D$ and $\Delta_J(a)\geq\delta(\pi_J(a))\}$;
  \item[(Diff')]
    $\pi_J(C_D\setminus C_E)$ is disjoint from $E$, for every $E\in\cD$.
\end{itemize}
Indeed, we do not need to require that $\delta$ is integrally affine,
because if property (Sup) holds for a definable map $\delta:B\to\cQ$ it will
then hold for every larger map, and Proposition~\ref{pr:maj-Z-aff}
provides an integrally affine one. Then by (Fac'), properties (Sub)
and (Diff) will follow from (Sub') and (Diff'). Because $\bar f=+\infty$ on
$\partial B$, (Fac) will then follow from (Sup) and (Sub) by
Claim~\ref{cl:fac}. 

We have to distinguish several cases. For the convenience of the
reader most of them are accompanied by a figure representing (very
approximatively) the general idea of the construction when $m=2$. In
these figures, each $(i,j)\in\Gamma^2$ with positive coordinates takes place
at the point of coordinates $(1-2^{-i},1-2^{-j})$ in the figure, so
that the order is preserved. Therefore the set of positive points of
$\Gamma^2$ is represented by a square whose bottom, left, right and
top edges represent $\Gamma\times\{0\}$, $\{0\}\times\Gamma$, $\Gamma\times\{+\infty\}$ and $\{\infty\}\times\Gamma$
respectively\footnote{The reader may imagine that this lower edge of
  the square is actually $\Gamma^{m-1}$ in the induction steps, while the
left edge is $\Gamma$.}\;. A side effect of this compactification is that
linear functions are represented by curved lines. 

The various precells involved will be represented in this square by
gray areas whose union is $A$ (or the auxiliary precell $A^\circ$ in
figure~\ref{fi:div-3-right}). Their socles will take place in the
bottom $\Gamma\times\{0\}$, as we identify it with $\Gamma$. Finally $B$ will be
represented by a thick edge or a corner of the square, depending on
the cases. 

}

\paragraph{Case 1:} $Y=X$. 
\\

Then $B=X\times\{+\infty\}$ hence $\nu=+\infty$ and $J= I\setminus\{m\}$, thus $\Delta_J(a)=a_m$ and
$\pi_J(a)=(\widehat{a},+\infty)$ for every $a\in A$. So $B$ identifies to $X$
and $\cD$ to $\widehat{\cD}$. Roughly speaking, we are going on one
hand to split $A$ in two parts by means of a function $\lambda$ to be
defined such that $\mu<\lambda$ and $f<\lambda$, and on the other hand to lift the
family $\widehat{\cD}$ of $X$ to a family of precells covering of the
upper part of $A$. This will give us $\cC$. As figure~\ref{fi:div-1}
suggests, the lower part of $A$ will remain unchanged and give $\cU$. 

\begin{figure}[h]
  \small
  \begin{center}
    \begin{tikzpicture}[scale=.4]

      \def\fonctionL#1{plot[domain=#1] (\x,{2.5*((\x+2)/12)^2+7.5})}
      \def\fonctionMU#1{plot[domain=#1] (\x,{6*((\x+1)/11)^2+4})}

      \coordinate (A) at (0,7.5694444444444);
      \coordinate (B) at (2.5,7.85156250000000);
      \coordinate (C) at (6,8.611111111111111);

      \fill[color=gray!50] \fonctionMU{0:10} -- (0,10) --cycle;
      \fill[color=gray!20] \fonctionL{0:2.5} -- (2.5,10) -- (0,10) --cycle; 
      \fill[color=gray!40] \fonctionL{2.5:6} -- (6,10) -- (2.5,10) --cycle; 
      \fill[color=gray!60] \fonctionL{6:10} -- (6,10) --cycle; 

      \draw \fonctionMU{0:10};
      \draw[dashed] \fonctionL{0:10};

      \draw[color=lightgray,dotted]
        (0,0) -- (A)
        (2.5,0) -- (B)
        (6,0) -- (C)
        (10,0) -- (10,10);
      \draw[dashed]
        (A) -- (0,10)
        (B) -- (2.5,10)
        (C) -- (6,10);

      \draw[line width=2pt] (0,10) -- (10,10);
      \draw[(-)] (0,10) -- node[above]{$D_1$} (2.5,10);
      \draw[(-)] (2.5,10) -- node[above]{$D_2$} (6,10);
      \draw[(-)] (6,10) -- node[above]{$D_3$} (10,10);

      \draw[(-)] (0,0) -- node[above]{$\widehat{D}_1$} (2.5,0);
      \draw[(-)] (2.5,0) -- node[above]{$\widehat{D}_2$} (6,0);
      \draw[(-)] (6,0) -- node[above]{$\widehat{D}_3$} (10,0);

      \draw[dotted] (0,7.4) .. controls +(5,-.5) and +(-6,-2.8) .. (10,10);

      \draw
        (2,6) node {$U$}
        (1.25,8.9) node {$C_{D_1}$}
        (4.25,9) node {$C_{D_2}$}
        (6.9,9.45) node {$C_{D_3}$}
        (5,5.2) node {$\mu$}
        (5,7.1) node {$f$}
        (3.65,7.7) node {$\lambda$};
      

    \end{tikzpicture}
  \end{center}
  \caption{Dividing $A$ when $\nu=+\infty$.\label{fi:div-1}}
\end{figure}

Let us check the details now. Proposition~\ref{pr:maj-Z-aff} gives
a largely continuous affine function $\lambda:X\to\cZ$ such that
$\lambda(x)\geq\max(f(x,+\infty),\mu(x)+N_m)$ on $X$. Let $U$ be the set of $a\in
F_I(\Gamma^m)$ such that $\widehat{a}\in X$, $\mu(\widehat{a})\leq a_m\leq
\lambda(\widehat{a})$ and $a_m\equiv\rho\;[N_m]$. It is clearly a largely continuous
precell mod $N$ (with socle $X$ since $\lambda\geq\mu+N_m$). For each $D\in\cD$ let
$C_D$ be the set of $a\in F_I(\Gamma^m)$ such that $\widehat{a}\in\widehat{D}$,
$\lambda(\widehat{a})+1\leq a_m$ and $a_m\equiv\rho\;[N_m]$. This is a largely
continuous precell mod $N$ with socle $\widehat{D}$. Let $\cC=\{C_D\tq
D\in\cD\}$ and $\cU=\{U\}$. Obviously $\bigcup\cC=A\setminus U$, $\partial U=\partial B$ and {\modif (Fac),
(Diff) hold for every $D\in\cD$}.

By construction, for every $D\in\cD$ and every $a\in C_D$ we have
\begin{displaymath}
  \Delta_J(a)=a_m> \lambda(\widehat{a})\geq f(\widehat{a},+\infty)=f\big(\pi_J(a)\big)
\end{displaymath}
{\modif which proves (Sub')}. Further, for every $a\in A$ such that $\pi_J(a)\in D$
and $\Delta_J(a)\geq \lambda(\widehat{a})$, we have $\widehat{a}\in\widehat{D}$,
$a_m\geq\lambda(\widehat{a})$ and $a_m\equiv\rho\;[N_m]$ hence $a\in C_D$. {\modif This is
property (Sup)} with $\delta(b)=\lambda(\widehat{b})$ on $B$. 

\paragraph{Case 2:} $Y$ is a facet of $X$ and $B=Y\times\{+\infty\}$.
\\

Then $J=\widehat{J}$, $\bar\mu=+\infty$ on $Y$ (otherwise by
Proposition~\ref{pr:pres-face}, $F_{J\cup\{m\}}(A)\neq\emptyset$ is a proper face of
$A$ larger than $B$) and $\nu<+\infty$ (otherwise $X\times\{+\infty\}$ is a proper face
of $A$ larger than $B$). In particular $\mu(x)\geq f(y,+\infty)$ for every $y\in
Y$ and every $x\in X$ close enough to $y$, so there is a definable map
$\eta:Y\to\cZ$ such that for every $x\in X$
\begin{equation}
  \Delta_{\widehat{J}}(x)\geq \eta\big(\pi_{\widehat{J}}(x)\big) 
  \Rightarrow \mu(x)\geq f\big(\pi_{\widehat{J}}(x),+\infty\big). 
  \label{eq:le-fa-eta2}
\end{equation}

In the precells $C_D$ that we are looking for, we want to have
$a_m\geq\mu(\widehat{a})\geq\pi_{\widehat{J}}(\widehat{a})$ in order to get
condition (Sub). The idea is then to inflate first $\widehat{\cD}$ in
a way controlled by $\eta$ (using the induction hypothesis), and then to
divide $A$ by lifting this division of its socle $X$ (see
figure~\ref{fi:div-2}).

\begin{figure}[h]
  \small
  \begin{center}
    \begin{tikzpicture}[scale=.4]

      \def\fonctionNU#1{plot[domain=#1] (\x,{2.5*((\x+2)/12)^2+7.5})}
      \def\fonctionMU#1{plot[domain=#1] (\x,{6*((\x+1)/11)^2+4})}
      \newcommand{\cellule}[3]{
          \fill[color=gray!#1] 
            plot[domain=#2:#3] (\x,{2.5*((\x+2)/12)^2+7.5}) --
            plot[domain=#3:#2] (\x,{6*((\x+1)/11)^2+4}) -- cycle;
        }

      \coordinate (AN) at (0,7.5694444444444);
      \coordinate (BN) at (2.5,7.8515625000000);
      \coordinate (CN) at (6,8.61111111111111);
      \coordinate (AM) at (0,4.0495867768595);
      \coordinate (BM) at (2.5,4.6074380165289);
      \coordinate (CM) at (6,6.4297520661157);

      \cellule{20}{0}{2.5};
      \cellule{40}{2.5}{6};
      \cellule{60}{6}{10};

      \draw[color=lightgray,dotted]
        (0,0) -- (AM)
        (2.5,0) -- (BM)
        (6,0) -- (CM)
        (10,0) -- (10,10);
      \draw[dashed]
        (AM) -- (AN)
        (BM) -- (BN)
        (CM) -- (CN);
      \draw[color=lightgray,very thin]
        (AN) -- (0,10) -- (10,10);

      \draw \fonctionMU{0:10};
      \draw \fonctionNU{0:10};
      \draw (10,10) node {$\bullet$} node[above] {$D$};
      \draw (10,0) node {$\bullet$} node[above] {$\widehat{D}$};

      \draw[(-)] (0,0) -- node[above]{$W$} (2.5,0);
      \draw[(-)] (2.5,0) -- node[above]{$W'$} (6,0);
      \draw[(-)] (6,0) -- node[above]{$S_D$} (9.9,0);

      \draw[<->] (10.4,10) -- node[right] {$f$} (10.4,6);
      \draw[dotted] (5.9,6) -- (10.5,6);

      \draw
        (1.25,6) node {$U_W$}
        (4.25,6.7) node {$U_{W'}$}
        (7.3,8.3) node {$C_D$}
        (5,5.2) node {$\mu$}
        (5,8.7) node {$\nu$};
      

    \end{tikzpicture}
  \end{center}
  \caption{Dividing $A$ when $B=Y\times\{+\infty\}$ and $Y$ is a facet of $X$.\label{fi:div-2}}
\end{figure}

The induction hypothesis applies to $X$, $Y$, $\widehat{\cD}$ and
$g(y)=\max(f(y,+\infty),\eta(y))$ on $Y$. It gives a definable map $\varepsilon:Y\to\cZ$
and a pair $(\cS,\cW)$ of families of precells. For each $W\in\cW$ (resp.
$D\in\cD$) let $U_W$ (resp. $C_D$) be the set of $a\in F_J(\Gamma^m)$ such that
$\widehat{a}\in W$ (resp. $\widehat{a}$ belongs to the unique precell
$S_{\widehat{D}}\in\cS$ whose facet is $\widehat{D}$), $\mu(\widehat{a})\leq
a_m\leq\nu(\widehat{a})$ and $a_m\equiv\rho\;[N_m]$. This is obviously a largely
continuous precell mod $N$ with socle $W$ (resp. $S_{\widehat{D}}$), and
exactly the set of $a\in A$ such that $\widehat{a}\in W$) (resp.
$S_{\widehat{D}}$). In particular it is contained in $A$, and if we
let $\cU=\{U_W\tq W\in\cW\}$ and $\cC=\{C_D\tq D\in\cD\}$ then $\cU$ is a
partition $A\setminus\bigcup\cC$ by induction hypothesis on $(\cS,\cW)$. 

For every $W\in\cW$, every proper face of $W$ is a proper face $Z$ of
$X$. Let $H$ be its support. Then by Proposition~\ref{pr:pres-face},
$(\bar\mu_{|Z},\bar\nu_{|Z},\rho)$ is a presentation of $F_H(U_W)$, but
also of $F_H(A)$ hence $F_H(U_W)=F_H(A)$ is a proper face of $A$. 

{\modif Let us check (Fac') and (Diff).} For
every $D\in\cD$, since $\bar\mu=+\infty$ on $Y$ we have
$F_J(C_D)=\widehat{D}\times\{+\infty\}=D$  by Proposition~\ref{pr:pres-face},
hence $\pi_J(C_D)=D$ by Proposition~\ref{pr:face-egale-proj}.
Moreover for every $E\in\cE$, $\pi_{\widehat{J}}(S_{\widehat{D}}\setminus
S_{\widehat{E}}) \subseteq \widehat{D}\setminus\widehat{E}$ by induction hypothesis
hence
\begin{displaymath}
  \pi_J(C_D\setminus C_E)
  = \big[\pi_{\widehat{J}}(S_{\widehat{D}})
           \setminus \pi_{\widehat{J}}(S_{\widehat{E}})\big] \times \{+\infty\}
  \subseteq (\widehat{D}\setminus\widehat{E}) \times \{+\infty\}
  = D\setminus E.
\end{displaymath}

{\modif Now we turn to (Sub').} For every $a\in C_D$, since $J=\widehat{J}$ we
have $\Delta_J(a)=\min(a_m,\Delta_{\widehat{J}}(\widehat{a}))$ and
$\pi_J(a)=(\pi_{\widehat{J}}(\widehat{a}),+\infty)$. By the induction hypothesis
$\Delta_{\widehat{J}}(\widehat{a})\geq\eta\circ\pi_{\widehat{J}}(\widehat{a})$ and
$\Delta_{\widehat{J}}(\widehat{a})\geq f(\pi_{\widehat{J}}(\widehat{a}),+\infty)$
because $\widehat{a}\in S_{\widehat{D}}$. The first inequality implies
that $a_m\geq\mu(x)\geq f(\pi_{\widehat{J}}(\widehat{a}),+\infty)$ by
(\ref{eq:le-fa-eta2}). Together with the second inequality this gives
that $\min(a_m,\Delta_{\widehat{J}}(\widehat{a}))\geq
f(\pi_{\widehat{J}}(\widehat{a}),+\infty)$. That is $\Delta_J(a)\geq f(\pi_J(a))$.

We finally check (Sup) {\modif with $\delta(b)=\varepsilon(\widehat{b})$ on
$B$}. Since $C_D$ is clearly the set of $a\in A$ such that
$\widehat{a}\in S_{\widehat{D}}$, for every $a\in A$ such that $\pi_J(a)\in D$
(hence $\pi_{\widehat{J}}(\widehat{a})\in\widehat{D}$) and
$\Delta_J(a)\geq\varepsilon\circ\pi_{\widehat{J}}(\widehat{a})$ we have $\widehat{a}\in
S_{\widehat{D}}$ by induction hypothesis on $\varepsilon$ and $\widehat{D}$
hence $a\in C_D$.

\paragraph{Case 3:} $Y$ is a facet of $X$ and $B=(Y\times\cZ)\cap\overline{A}$.
\\

Then $m\in\Supp B = J$, hence
$\bar\mu<+\infty$ on $Y$, $\mu_D<+\infty$ for every $D\in\cD$, and for every $a\in A$:
\begin{equation}
  \Delta_J(a)=\Delta_{\widehat{J}}(\widehat{a})
  \quad\mbox{and}\quad 
  \pi_J(a)=(\pi_{\widehat{J}}(\widehat{a}),a_m)
  \label{eq:delta-J}
\end{equation}
Note that $\rho=\rho_D$ for every $D\in\cD$ because, given any $b\in
D\subseteq B$, we have $b_m\neq+\infty$ and on one hand $b_m\equiv\rho_D\;[N_m]$, on the
other hand $b_m\equiv\rho\;[N_m]$ (using the presentation of $B=F_J(A)$
given by Proposition~\ref{pr:pres-face}). 

\paragraph{\em Sub-case 3.1:} $\nu<\infty$. 
\\

{\modif
  This is the most difficult case, because $f(b)$ depends both on
$\widehat{b}$ and $b_m$. Therefore our construction is done in two
steps. 
\\

{\em Step 1}:
Intuitively, we are going to remove the top of $A$ by introducing a
function $\zeta(x)$ which will ensure that $a_m$ doesn't grow too fast as
$a\in A$ goes closer to $B$. The connection between $\zeta$ (a function of
$x\in\widehat{A}$) and $f$ (a function of $b\in B$) will be made {\it via}
an intermediate function $g$ defined below. We need to restrict the
socle $X$ of $A$ to a domain $X^\circ$ close enough to $Y$ so as to ensure
at least that $\mu<\zeta<\nu$ on $X^\circ$. In order to do this we will divide $X$ by
applying to it the induction hypothesis. The resulting partition of
$X$ together with $\zeta$ will give us a partition of $A$ which might look
like figure~\ref{fi:div-3-top}.
}

\begin{figure}[h]
  \small
  \begin{center}
    \begin{tikzpicture}[scale=.4]
      \draw[thin,color=gray!50] (0,0) rectangle (10,10);
   
      \def\fonctionNU#1{plot[domain=#1] (\x,{6.5*((\x+3)/13)^3+3.5})}
      \def\courbeF{(8,2.5) .. controls +(0,7) and +(-1,-5) .. (10,10)} 
      \def\courbeZ{(7,5.5) .. controls +(3,1) and +(-.1,-1) .. (10,10)}
   
      \fill[color=gray!60] \courbeZ -- (10,2.5) -- (7,2.5) --cycle;
      \fill[color=gray!20] \fonctionNU{7:10} -- \courbeZ -- (7,5.5) --cycle;
      \fill[color=gray!40] \fonctionNU{4:7} -- (7,2.5) -- (4,2.5) --cycle;
      \fill[color=gray!20] \fonctionNU{0:4} -- (4,2.5) -- (0,2.5) --cycle; 
   
      \draw[line width=2pt] (10.03,2.5) -- node[right]{$B$} (10.03,10);
   
      \draw[dotted,color=lightgray]
        (0,0) -- (0,2.5)
        (4,0) -- (4,2.5)
        (7,0) -- (7,2.5);
   
      \draw \fonctionNU{0:10};
      \draw plot[domain=0:10] (\x,2.5);
   
      \draw[dashed] \courbeZ 
        (4,2.5) -- (4,4.5147928994083)
        (7,2.5) -- (7,6.4585798816568)
        (0,2.5) -- (0,3.5798816568047);
   
      \draw[dotted,thick] \courbeF ;
   
      \draw[(-)] (0,0) -- node[above]{$W_1$} (4,0);
      \draw[(-)] (4,0) -- node[above]{$W_2$} (7,0);
      \draw[(-)] (7,0) -- node[above]{$X^\circ$} (10,0);
   
      \draw (7.6,6.4) node{$V$};
      \draw (9,3.4) node{$A^\circ$};
      \draw (5.5,3.15) node{$U_{W_2}$};
      \draw (2,3.15) node{$U_{W_1}$};
      \draw (8,4.6) node[right] {$f$};
      \draw (9.2,6.5) node{$\zeta$};
      \draw (5.5,2.1) node{$\mu$};
      \draw (5.5, 5.8) node{$\nu$};

    \end{tikzpicture} 

  \end{center}
  \caption{Removing the top of $A$.\label{fi:div-3-top}}
\end{figure}  

Let $g:Y\to\cZ$ be a positive affine map given by
Proposition~\ref{pr:maj-Z-aff} such that $g(y)\geq f(y,0)+\alpha(\bar\mu(y)+
N_m)$ on $Y$ and $\bar g=+\infty$ on $\partial Y$. Given any $y\in Y$, since
$g(y)<+\infty$ and $\nu-\mu$ has limit $+\infty$ at $y$, we have
$\nu(x)-\mu(x)>2N_m+1+g(y)$ for every $x\in X$ close enough to
$y$. So there is a definable function $\eta_1:Y\to\cZ$ such that
for every $x\in X$
\begin{equation}
  \Delta_{\widehat{J}}(x)\geq\eta_1\big(\pi_{\widehat{J}}(x)\big) \Rightarrow 
  \nu(x)-\mu(x)>2N_m+1+g\big(\pi_{\widehat{J}}(x)\big).
  \label{eq:le-fa-eta31}
\end{equation}
The induction hypothesis applies to $X$, $Y$, $\{Y\}$ and $\max(\eta_1,2g)$.
It gives a definable map $\varepsilon_1:Y\to\cZ$ and a pair $(\cS_1,\cW_1)$ of
families of precells. In the present case $\cS_1$ consists of a single
largely continuous precell $X^\circ$ mod $N$ contained in $X$, such that
$\Delta_{\widehat{J}}\geq\max(\eta_1\circ\pi_{\widehat{J}},2g\circ\pi_{\widehat{J}})$ on
$X^\circ$, and every $x\in X$ such that $\pi_{\widehat{J}}(x)\in Y$ and
$\Delta_{\widehat{J}}(x)\geq\varepsilon_1(\pi_{\widehat{J}}(x))$ belongs to $X^\circ$. The
family $\cW_1$ is a finite partition of $X\setminus X^\circ$ in largely continuous
precells mod $N$. Let $\cU_1=\{U_W\tq W\in\cW_1\}$ where $U_W=(W\times\cZ)\cap A$
for every $W\in\cW$. Since $\nu<+\infty$, the proper faces of $U_W$ are proper
faces of $A$ by Claim~\ref{cl:U-pas-B}. 

For every $k\notin\widehat{J}$ and every $x\in X^\circ$, we have
$x_k\geq\Delta_{\widehat{J}}(x)$ because $k\notin\widehat{J}$, and $\Delta_{\widehat{J}}
\geq 2g\circ\pi_{\widehat{J}}(x)$ on $X^\circ$ by the induction hypothesis. Thus on one
hand $x_k-g\circ\pi_{\widehat{J}}(x)\geq g\circ\pi_{\widehat{J}}(x)\geq1$, and on the
other hand $x_k-g\circ\pi_{\widehat{J}}(x)\geq x_k/2$. In particular $x\mapsto
x_k-g\circ\pi_{\widehat{J}}(x)$ is a largely continuous positive
affine function on $X^\circ$ with limit $+\infty$ at every point of $\partial X^\circ$.
We also have $\Delta_{\widehat{J}}(x)\geq\eta_1(\pi_{\widehat{J}}(x))$ by induction
hypothesis, hence $\nu(x)-\mu(x)>2N_m+1+g(\pi_{\widehat{J}}(x))$ by
(\ref{eq:le-fa-eta31}). In particular the restriction of $\nu-\mu-2N_m-1$
to $X^\circ$ is a positive affine function with limit $+\infty$ at
every point of $\partial X^\circ$. Proposition~\ref{pr:min-Z-aff} then gives a largely
continuous positive affine function $\lambda:X^\circ\to\cQ$ such that
$\bar\lambda=+\infty$ on $\partial X^\circ$, $\lambda\leq\nu-\mu-2N_m-1$ on $X^\circ$ and
$\lambda(x)\leq(x_k-g\circ\pi_{\widehat{J}}(x))/\alpha$ for every $k\notin\widehat{J}$. 
Let us quote for further use that in particular
\begin{equation}
  \alpha\lambda(x) \leq \min_{k\notin\widehat{J}}\big(x_k-g(\pi_{\widehat{J}}(x))\big)
        = \Delta_{\widehat{J}}(x)-g(\pi_{\widehat{J}}(x)).
  \label{eq:le-fa-lambda}
\end{equation}
Note that $\partial X^\circ=\overline{Y}$ because $X^\circ$ has a unique facet which
is $Y$ by Claim~\ref{cl:fac}, hence $\bar\lambda=+\infty$ on $\overline{Y}$.
Let $n\geq1$ an integer such that $n\lambda$ is integrally affine, so that
$\lambda(x)>t$ if and only if $\lambda(x)\geq t+1/n$ for every $(x,t)\in X^\circ\times\cZ$. 
Let $\zeta=\mu+\lambda+N_m$ on $X^\circ$, and $V$ (resp. $A^\circ$) be the set
of $a\in F_I(\Gamma^m)$ such that $\widehat{a}\in X^\circ$, $\zeta(\widehat{a})+1/n\leq
a_m\leq \nu(\widehat{a})$ (resp. $\mu(\widehat{a})\leq a_m\leq\zeta(\widehat{a})$) and
$a_m\equiv\rho\;[N_m]$. By construction $\zeta$ is a largely continuous affine
map on $X^\circ$ with $\bar\zeta=+\infty$ on $\partial X^\circ$. Moreover on $X^\circ$ we have
\begin{displaymath}
  \zeta+\frac{1}{n}+N_m=\mu+\lambda+2N_m+\frac{1}{n}\leq \nu
\end{displaymath}
(because $\lambda\leq\nu-\mu-2N_m-1$ by construction) hence the socle of
$V$ is $X^\circ$. Obviously $\mu+N_m\leq\mu+\lambda+N_m=\zeta$ (because $\lambda>0$ by
construction) hence the socle of $A^\circ$ is $X^\circ$. Thus both $V$ and
$A^\circ$ are largely continuous precells mod $N$ contained in $(X^\circ\times\cZ)\cap
A$. Moreover $a_m>\zeta(\widehat{a})=\mu(\widehat{a})+\lambda(\widehat{a})+N_m$
if and only if $a_m \geq \mu(\widehat{a})+\lambda(\widehat{a})+1/n+N_m =
\zeta(\widehat{a})+1/n$. Thus $V$ and $A^\circ$ form a partition of
$(X^\circ\times\cZ)\cap A$, or equivalently $\cU_1\cup\{V\}$ is a partition of 
$A\setminus A^\circ$. Since $\bar\zeta=+\infty$ on $\partial X^\circ=\overline{Y}$, by
Proposition~\ref{pr:pres-face} every proper face $V'$ of $V$ is of type
$Z\times\{+\infty\}$ for $Z$ a face of $Y$. In particular $V'$ is a proper face of
$A$. 
\\

{\modif
{\em Step 2}:
Intuitively, we are going to build the $C_D$'s by inflating inside
$A^\circ$ each $D$ in $\cD$ as suggested by figure~\ref{fi:div-3-right}
(which zooms in on $A^\circ$, the other parts of $A$ remaining
as in figure~\ref{fi:div-3-top}).
}

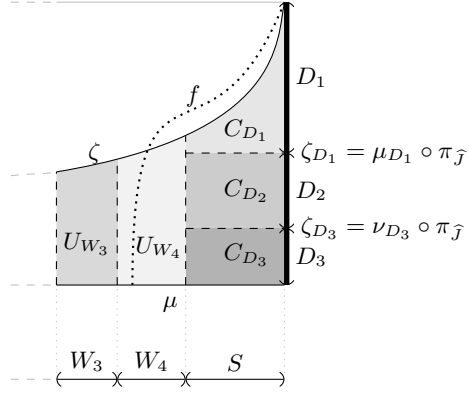
\begin{figure}[h]
  \small
  \begin{center}

    \begin{tikzpicture}[yscale=.5]

      \def\courbeF{(8,2.5) .. controls +(0,7) and +(-1,-5) .. (10,10)} 
      \def\courbeZ{(7,5.5) .. controls +(3,1) and +(-.1,-1) .. (10,10)}
   
      \begin{scope}
        \clip (8.7,6) rectangle (10,10);
        \fill[color=gray!20] (7,2.5) -- \courbeZ -- (10,2.5) --cycle;
      \end{scope}

      \fill[color=gray!40] (8.7,4) rectangle (10,6);

      \fill[color=gray!60] (8.7,2.5) rectangle (10,4);

      \begin{scope}
        \clip (7.8,0) rectangle (8.7,10);
        \fill[color=gray!10] (7,2.5) -- \courbeZ -- (10,2.5) --cycle;
      \end{scope}

      \begin{scope}
        \clip (7,0) rectangle (7.8,10);
        \fill[color=gray!30] (7,2.5) -- \courbeZ -- (10,2.5) --cycle;
      \end{scope}

     \draw[dotted,color=lightgray]
       (7,0) -- (7,2.5)
       (7.8,0) -- (7.8,2.5)
       (8.7,0) -- (8.7,2.5)
       (10,0) -- (10,2.5);

      \draw[dashed]
        (7,2.5) -- (7,5.5)
        (7.8,2.5) -- (7.8,5.82)
        (8.7,2.5) -- (8.7,6.45)
        (8.7,4) -- (10,4) ++(0.1,0) node[right] {$\zeta_{D_3}=\nu_{D_3}\circ\pi_{\widehat{J}}$}
        (8.7,6) -- (10,6) ++(0.1,0) node[right] {$\zeta_{D_1}=\mu_{D_1}\circ\pi_{\widehat{J}}$};

      \draw[color=lightgray,thin] (7,10) -- (10,10);
      \draw[color=lightgray,dashed] (6.4,10) -- (7,10);
      \draw[color=lightgray,dashed] (6.4,5.4) .. controls +(.5,.1) and +(-.3,-.1) .. (7,5.5);
      \draw[color=lightgray,dashed] (6.4,2.5) -- (7,2.5);
      \draw[color=lightgray,dashed] (6.4,0) -- (7,0);

      \draw[line width=2pt] (10.03,2.5) -- (10.03,10);

      \draw[(-)] (10.03,2.5) -- node[right] {$D_3$} (10.03,4);
      \draw[(-)] (10.03,4) -- node[right] {$D_2$} (10.03,6);
      \draw[(-)] (10.03,6) -- node[right] {$D_1$} (10.03,10);

      \draw \courbeZ;
      \draw plot[domain=7:10] (\x,2.5);
      \draw[dotted,thick] \courbeF;

     \draw[(-)] (7,0) -- node[above]{$W_3$} (7.8,0);
     \draw[(-)] (7.8,0) -- node[above]{$W_4$} (8.7,0);
     \draw[(-)] (8.7,0) -- node[above]{$S$} (10,0);

     \draw 
       (7.4,3.6) node{$U_{W_3}$}
       (8.35,3.5) node{$U_{W_4}$}
       (9.5,3.3) node{$C_{D_3}$}
       (9.5,5) node{$C_{D_2}$}
       (9.5,6.6) node{$C_{D_1}$}
       (8.8,7.5) node{$f$}
       (7.5,6) node{$\zeta$}
       (8.5,2) node{$\mu$};

    \end{tikzpicture}
  \end{center}
  \caption{Dividing $A^\circ$ by inflating each $D\in\cD$.\label{fi:div-3-right}}
\end{figure}

For every $D\in\cD$ let $\zeta_D=\nu_D$ if $\nu_D<+\infty$ and $\zeta_D=\mu_D+N_m$
otherwise. Since $\bar\zeta=+\infty$ on $Y$ there is a definable function
$\eta_2:Y\to\cZ$ such that for every $x\in X^\circ$ and every $D\in\cD$ such that
$\pi_{\widehat{J}}(x)\in\widehat{D}$ we have
\begin{equation}
  \Delta_{\widehat{J}}(x)\geq \eta_2\big(\pi_{\widehat{J}}(x)\big) \Rightarrow
  \zeta(x)\geq \zeta_D\big(\pi_{\widehat{J}}(x)\big).
  \label{eq:le-fa-eta32}
\end{equation}

The induction hypothesis applies to $X^\circ$, $Y$, $\widehat{\cD}$ and
$\eta_2$. It gives a definable map $\varepsilon_2:Y\to\cZ$ and a pair $(\cS_2,\cW_2)$
of families of precells. For each $W\in\cW_2$ let $U_W=(W\times\cZ)\cap A^\circ$.
Clearly the family $\cU_2=\{U_W\tq W\in\cW_2\}$ is a finite partition in
largely continuous precells mod $N$ of the complement in $A^\circ$ of the set
$A^{\circ\circ}=(\bigcup\cS_2\times\cZ)\cap A^\circ$. Equivalently, $\cU_1\cup\{V\}\cup\cU_2$ is a
finite partition of $A\setminus A^{\circ\circ}$. Since $\nu<+\infty$, by
Claim~\ref{cl:U-pas-B} the proper faces of $U_W$ are proper faces of
$A$ for every $W\in\cW_2$. 

For each $D\in\cD$ let $S_{\widehat{D}}$ be the precell in $\cS_2$ given by
the induction hypothesis, so that conditions (Fac), (Sub), (Sup), (diff)
apply to $S_{\widehat{D}}$, $\eta_2$ and $\varepsilon_2$. If $\nu_D=+\infty$ (resp.
$\nu_D<+\infty$) let $C_D$ be the set of $a\in F_I(\Gamma^m)$ such that
$\widehat{a}\in S_{\widehat{D}}$, $\mu_D(\pi_J(\widehat{a}))\leq a_m\leq
\zeta(\widehat{a})$ (resp. $\mu_D(\pi_J(\widehat{a}))\leq a_m\leq
\nu_D(\pi_J(\widehat{a}))$) and $a_m\equiv\rho\;[N_m]$.

{\modif Let us check that $C_D$ is a largely continuous precell mod
$N$.} For every $x\in S_{\widehat{D}}$ we have
$\pi_{\widehat{J}}(x)\in\widehat{D}$, because
$\widehat{D}=F_{\widehat{J}}(S_{\widehat{D}})$ by (Fac), and
$F_{\widehat{J}}(S_{\widehat{D}})=\pi_{\widehat{J}}(S_{\widehat{D}})$ by
Proposition~\ref{pr:face-egale-proj}(\ref{it:face-proj}). So there is
$b\in D$ such that $\widehat{b}=x$, $\mu_D(\pi_{\widehat{J}}(x))\leq
b_m\leq\nu_D(\pi_{\widehat{J}}(x))$ and $b_m\equiv\rho\;[N_m]$.  We can (and do)
require in addition that $b_m\leq\mu_D(\pi_{\widehat{J}}(x))+N_m$, hence
$b_m\leq\zeta_D(\pi_{\widehat{J}}(x))$. Because $x\in S_{\widehat{D}}$ we also
have $\Delta_{\widehat{J}}(x)\geq\eta_2\circ\pi_{\widehat{J}}(x)$ by (Sub), hence
$\zeta_D(\pi_{\widehat{J}}x)\leq\zeta(x)$ by (\ref{eq:le-fa-eta32}). Altogether
this proves that $(x,b_m)\in C_D$, hence $x$ belongs to the socle of
$C_D$. So the socle of $C_D$ is exactly $S_{\widehat{D}}$ and $C_D$ is
then a largely continuous precell mod $N$. 

{\modif Now we turn to (Fac').} The presentation of
$F_J(C_D)$ given by Proposition~\ref{pr:pres-face} is exactly
$(\mu_D,\nu_D,\rho)$, hence $F_J(C_D)=D$ since $\rho_D=\rho$. In particular
$\pi_J(C_D)=D$ by
Proposition~\ref{pr:face-egale-proj}(\ref{it:face-proj}). More
precisely, the above computations show that we have
\begin{equation}
  C_D=\big\{a\in A^{\circ\circ}\tq \widehat{a}\in S_{\widehat{D}} \mbox{ and } 
                         \pi_J(a)\in D\big\}.
  \label{eq:le-fa-CD}
\end{equation}

Let $\cC=\{C_D\tq D\in\cD\}$ and $\cU=\cU_1\cup\{V\}\cup\cU_2$. We already know
that $\cU$ is a finite partition of $A\setminus A^{\circ\circ}$ in largely continuous
precells mod $N$ whose proper faces are proper faces of $A$, and that
each $C_D\in\cC$ is a largely continuous precell mod $N$ contained in
$A^{\circ\circ}$ with socle $S_{\widehat{D}}$ and $F_J(C_D)=\pi_J(C_D)=D$.
Let us check that $\bigcup\cC=A^{\circ\circ}$. In order to do so, we are claiming
that
\begin{equation}
  \forall a\in A^{\circ\circ},\forall E\in\cD,\ \pi_J(a)\in E \Rightarrow a\in C_E.
  \label{eq:le-fa-claim}
\end{equation}
Assume the contrary and let $a\in A^{\circ\circ}$, $E\in\cE$ be such that $\pi_J(a)\in E$
and $a\notin C_E$. By (\ref{eq:le-fa-CD}) this implies that $\widehat{a}\notin 
S_{\widehat{E}}$. But the socle of $A^{\circ\circ}$ is $\bigcup\cS_2$, hence
$\widehat{a}\in S_{\widehat{D}}$ for some $D\in\cD$. Thus
$\pi_{\widehat{J}}(\widehat{a})$ belongs to
$\pi_{\widehat{J}}(S_{\widehat{D}}\setminus S_{\widehat{E}})$. By the induction
hypothesis the latter is contained in $\widehat{D}\setminus\widehat{E}$, hence
$\pi_{\widehat{J}}(\widehat{a})\notin\widehat{E}$. But
$\pi_{\widehat{J}}(\widehat{a})$ is also the socle of $\pi_J(a)$. Since
$\pi_J(a)\in E$ it follows that $\pi_{\widehat{J}}(\widehat{a})\in\widehat{E}$, a
contradiction.

That $A^{\circ\circ}\subseteq\bigcup\cC$ then follows immediately from
(\ref{eq:le-fa-claim}) and the fact that $\pi_J(A^{\circ\circ})\subseteq\pi_J(A)=B\subseteq\bigcup\cD$.
So $A^{\circ\circ}=\bigcup\cC$ and it only remains to check (Sub'), (Sup) and
(Diff') for any fixed $D\in\cD$.

We start with (Diff'). Pick any $E\in\cD$, assume that there is a point
$b$ in $\pi_J(C_D\setminus C_E)$ which belongs to $E$. Then $b=\pi_J(a)$ for some
$a\in C_D\setminus C_E$. We have $a\in A^{\circ\circ}$ and $a\notin C_E$, hence $\pi_J(a)\notin E$ by
(\ref{eq:le-fa-claim}), that is $b\notin E$, a contradiction. Hence
$\pi_J(C_D\setminus C_E)$ is disjoint from $E$. 

Let us now turn to (Sup). For every $b\in B$, since $\bar\zeta=+\infty$ on $\partial
X^\circ=\overline{Y}$ and $\widehat{b}\in Y$, we have $\zeta(x)\geq b_m$ whenever $x\in
X^\circ$ is close enough to $\widehat{b}$ (that is whenever
$\pi_{\widehat{J}}(x)=\widehat{b}$ and $\Delta_{\widehat{J}}(x)$ is large
enough). So there is a definable function $\eta_3:B\to\cZ$ such that for
every $a\in (X^\circ\times\cZ)\cap A$
\begin{equation}
  \Delta_{\widehat{J}}(\widehat{a})\geq\eta_3\big(\pi_J(a)\big) \Rightarrow 
  \zeta(\widehat{a})\geq a_m.
  \label{eq:le-fa-eta33}
\end{equation}
Let $\delta:b\in B\mapsto\max(\varepsilon_1(\widehat{b}),\eta_3(b),\varepsilon_2(\widehat{b}))$. For
every $a\in A$ such that $\pi_J(a)\in D$ and $\Delta_J(a)\geq\delta\circ\pi_J(a)$, since
$\Delta_J(a)=\Delta_{\widehat{J}}(\widehat{a})$ by (\ref{eq:delta-J}) we have in
particular $\pi_{\widehat{J}}(\widehat{a})\in Y$ and
$\Delta_{\widehat{J}}(\widehat{a})\geq\varepsilon_1(\pi_{\widehat{J}}(\widehat{a}))$, 
hence $\widehat{a}\in X^\circ$ by construction. So $a\in (X^\circ\times\cZ)\cap A$ and
$\Delta_{\widehat{J}}(\widehat{a})\geq\eta_3\big(\pi_J(a)\big)$, which implies that
$a_m\leq \zeta(\widehat{a})$ by (\ref{eq:le-fa-eta33}), hence $a\in A^\circ$ by
construction. On the other hand, since $\widehat{a}\in X^\circ$,
$\pi_{\widehat{J}}(\widehat{a})\in\widehat{D}$ and
$\Delta_{\widehat{J}}(\widehat{a})\geq\varepsilon_2(\pi_{\widehat{J}}(\widehat{a}))$, we
get that $\widehat{a}\in S_{\widehat{D}}$ by construction. In particular
$\widehat{a}\in\bigcup\cS_2$, hence $a\in A^{\circ\circ}$ since $A^{\circ\circ}=(\bigcup\cS_2\times\cZ)\cap
A^\circ$. Altogether we have $a\in A^{\circ\circ}$, $\widehat{a}\in S_{\widehat{D}}$
and $\pi_J(a)\in D$ hence that $a\in C_D$ by (\ref{eq:le-fa-CD}), which
proves (Sup). 

It only remains to check (Sub'), that is $\Delta_J\geq f\circ\pi_J$ on $C_D$. 
This is the moment to recall (\ref{eq:le-fa-lambda}), which says that
$\alpha\lambda\leq\Delta_{\widehat{J}}-g\circ\pi_{\widehat{J}}$ on $X^\circ$. Recall also that
$g(y)\geq f(y,0)+\alpha(\bar\mu(y)+N_m)$ on $Y$ by definition of $g$. Thus on
$X^\circ$ we have
\begin{equation}
  \alpha\lambda(x)
  \leq \Delta_{\widehat{J}}(x)
    - f\big(\pi_{\widehat{J}}(x),0\big) 
    - \alpha \bar\mu\big(\pi_{\widehat{J}}(x)\big) 
    - \alpha N_m. 
  \label{eq:le-fa-fin-lambda}
\end{equation}
For every $a\in C_D$, $\widehat{a}\in X^\circ$ and $a\in A^\circ$ hence
$a_m\leq\zeta(\widehat{a})=\mu(\widehat{a})+\lambda(\widehat{a})+N_m$. We also have
$\mu(\widehat{a})=\bar\mu\big(\pi_{\widehat{J}}(\widehat{a})\big)$ by
Proposition~\ref{pr:prol-pi}. Combining all this with
(\ref{eq:le-fa-fin-lambda}) we get that
\begin{equation}
  \alpha a_m \leq {\modif \alpha\bar\mu\big(\pi_{\widehat{J}}(x)\big) + \alpha \lambda(\widehat{a}) +\alpha N_m}
        \leq \Delta_{\widehat{J}}(\widehat{a})
          - f\big(\pi_{\widehat{J}}(\widehat{a}),0\big).
  \label{eq:le-fa-fin-am}
\end{equation}
Since $f(\pi_J(a))=f\big(\pi_{\widehat{J}}(\widehat{a}),0\big) +\alpha a_m$ by
the definition of $f$, and $\Delta_J(a)=\Delta_{\widehat{J}}(\widehat{a})$ by
(\ref{eq:delta-J}), we finally get from (\ref{eq:le-fa-fin-am}) that
$\Delta_J(a)=\Delta_{\widehat{J}}(\widehat{a})\geq f(\pi_J(a))$.

\paragraph{{\em Sub-case 3.2:}} $\nu=+\infty$. 
\\

This final case is easy: we simply divide $A$ in two pieces, above and
below a function $\lambda$ to be defined, so that the previous sub-case~3.2
applies to the lower part of $A$. The upper part $A$ doesn't require
any special treatment: it will simply be incorporated in the family
$\cU$. 

Let us check the details now. 
Proposition~\ref{pr:maj-Z-aff} gives a largely continuous integrally
affine map $\lambda$ on $X$ such that $\bar\lambda=+\infty$ on $\partial X$ and $\lambda\geq \mu+N_m$.
Let $A^-$ (resp. $A^+$) be the set of $a\in F_I(\Gamma^m)$ such that
$\widehat{a}\in X$, $\mu(\widehat{a})\leq a_m\leq \lambda(\widehat{a})$ (resp.
$\lambda(\widehat{a})+1\leq a_m$) and $a_m\equiv\rho\;[N_m]$. Its socle is $X$ (for
$A^-$ we use that $\lambda\geq \mu+N_m$) hence it is a largely continuous precell
mod $N$. Since $\lambda$ takes values in $\cZ$, $A^-$ and $A^+$ form a
partition of $A$. The presentation of the faces of $A$, $A^-$ $A^+$
given by Proposition~\ref{pr:pres-face} gives that every proper face
of $A^-$ and $A^+$ is a proper face of $A$, and $B$ is a face of
$A^-$. The previous sub-case~3.1 applies to $A^-$, $B$, $\cD$ and $f$.
It gives a pair $(\cC^-,\cW^-)$ of families of largely continuous
precells mod $N$ and an integrally affine map $\delta^-:B\to\cZ$. Then
$(\cC^-,\cW^-\cup\{A^+\})$ and $\delta^-$ have all the required properties for
$A$, $\cD$ and $f$, except possibly (Sup). We remedy this by
replacing $\delta^-$ by a larger function $\delta$ defined as follows. 

For every $b\in B$, we have $\lambda(x)\geq b_m$ for every $x\in X$ close
enough to $\widehat{b}$ since $\bar\lambda=+\infty$ on $Y$. So there is a
definable function $\eta:B\to\cZ$ such that for every $a\in A$
\begin{equation}
  \Delta_J(a)\geq \eta\big(\pi_J(a)\big) \Rightarrow \lambda(\widehat{a})\geq a_m.
  \label{eq:le-fa-eta-final}
\end{equation}
Let $\delta=\max(\eta,\delta^-)$, then for every $D\in\cD$ and every $a\in A$ such that
$\pi_J(a)\in D$ and $\Delta_J(a)\geq \delta(\pi_J(a))$ we have in particular
$\Delta_J(a)\geq\eta(\pi_J(a))$ hence $a_m\leq \lambda(\widehat{a})$ by
(\ref{eq:le-fa-eta-final}), that is $a\in A^-$. Moreover we
have $\pi_J(a)\in D$ and $\Delta_J(a)\geq\delta^-(\pi_J(a))$. Altogether this implies
that $a$ belongs to $C_D\in\cC^-$, which in turn proves (Sup).
\end{proof}

\begin{theorem}[Monohedral Division]\label{th:mono-div}
  Let $A\subseteq F_I(\Gamma^m)$ be a largely continuous precell mod $N$, $f:\partial A\to\cZ$
  a definable function, and $\cD$ a complex of monohedral largely
  continuous precells mod $N$ such that $\bigcup\cD=\partial A$. Then there exists a
  finite partition $\cC$ of $A$ in monohedral largely continuous precells
  mod $N$ such that $\cC\cup\cD$ is a closed complex, $\cC$ contains for
  every $D\in\cD$ a unique precell $C$ with facet $D$, and moreover $\Delta_J\geq
  f\circ\pi_J$ on $C$ where $J=\Supp D$. 
\end{theorem}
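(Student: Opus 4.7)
\medbreak\noindent\textit{Proof plan.}
The natural approach is induction on the number of proper faces of $A$. The base case is when $A$ is closed: then $\cD=\emptyset$, $A$ itself is its only face (hence trivially monohedral), and $\cC=\{A\}$ works. So assume $A$ has at least one facet.

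For the inductive step, pick any facet $B$ of $A$, let $J=\Supp B$, and set $\cD_B=\{D\in\cD\tq\Supp D=J\}$. Since $\partial A$ is closed, $\cD$ is a closed complex, and the elements of $\cD$ of support $J$ partition $B$ into monohedral largely continuous precells mod $N$. Apply Lemma~\ref{le:face-elargie} to $(A,B,f_{|B},\cD_B)$ to obtain the pair $(\cC_B,\cU)$: the family $\cC_B=\{C_D\tq D\in\cD_B\}$ consists of largely continuous precells in $A$, each having $D$ as its unique facet (and therefore monohedral, since its other faces are exactly the faces of the monohedral precell $D$, all linearly ordered below $C_D$); the family $\cU$ is a finite partition of $A\setminus\bigcup\cC_B$ into largely continuous precells whose proper faces are all proper faces of $A$, none of them equal to $B$ by Claim~\ref{cl:U-pas-B}. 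Each $U\in\cU$ therefore has strictly fewer faces than $A$. For each such $U$, set $\cD_U=\{D\in\cD\tq D\subseteq\partial U\}$; this is a subcomplex of $\cD$ partitioning $\partial U$ into monohedral largely continuous precells. Apply the induction hypothesis to $(U,f_{|\partial U},\cD_U)$ to obtain a partition $\cC_U$ of $U$ with the required properties, and take $\cC=\cC_B\cup\bigcup_{U\in\cU}\cC_U$.

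For the verification, $\cC$ is by construction a finite partition of $A$ in monohedral largely continuous precells mod $N$. The union $\cC\cup\cD$ is a partition of the closed set $\overline A$ and contains all faces of its members, since the faces of $C_D\in\cC_B$ are $C_D$, $D$, and the faces of $D$ (all in $\cC\cup\cD$ because $\cD$ is a closed complex), and the faces of elements of $\cC_U$ are in $\cC_U\cup\cD_U$ by induction. By the characterisation of closed complexes recalled in Section~\ref{se:notation}, $\cC\cup\cD$ is a closed complex. The inequality $\Delta_J\geq f\circ\pi_J$ on the chosen $C$ follows from property (Sub) of the Lemma when $C\in\cC_B$ and from the inductive hypothesis when $C\in\cC_U$.

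The hard part is the uniqueness. For $D\in\cD_B$ it is immediate: $C_D\in\cC_B$ works by property (Fac), no other element of $\cC_B$ has facet $D$, and no $C\in\cC_U$ can have a facet of support $J$ since $B\notin\{$faces of $U\}$. For $D\in\cD\setminus\cD_B$, however, $D$ might \emph{a priori} be contained in the frontier of several $U\in\cU$, thus belonging to several $\cD_U$'s and producing several candidate $C$'s in $\bigcup_U\cC_U$ with facet $D$. The main obstacle of the proof is therefore to show that the specific construction of $\cU$ in Cases~1, 2 and 3 of Lemma~\ref{le:face-elargie} arranges matters so that every proper face of $A$ different from $B$ is contained in $\partial U$ for exactly one $U\in\cU$. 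This should follow by a careful tracking of how the sockles are recursively partitioned in the Lemma's proof, so that the family $\cU$ behaves, at the level of faces of $A$, like a partition of $\partial A\setminus B$ by the frontiers of its members.
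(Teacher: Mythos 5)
Your proposal follows the same inductive scheme as the paper (induction on the number of proper faces, Lemma~\ref{le:face-elargie} applied to a chosen facet $B$, recursion on each $U\in\cU$), and you supply one genuine clarification that the paper leaves implicit: since Lemma~\ref{le:face-elargie} requires $\bigcup\cD=B$, it has to be applied to $\cD_B=\{D\in\cD:\Supp D=\Supp B\}$ rather than to $\cD$ itself. Your verifications of the partition property, of the complex property, of the $\Delta_J\geq f\circ\pi_J$ estimate, and of uniqueness for $D\in\cD_B$ all match what the paper does (or implicitly does).

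Where you part company with the paper is exactly where you say you do: the uniqueness of $C$ for $D\in\cD\setminus\cD_B$. The paper disposes of this with a single ``By construction,'' while you flag it as the hard point — and you are right to. However, the resolution you propose, namely that every proper face of $A$ different from $B$ is a proper face of \emph{exactly one} $U\in\cU$, is \emph{not} a consequence of Lemma~\ref{le:face-elargie} as stated, and in fact it fails for the construction carried out in its proof. Take for instance $A=\NN^3$ and $B=F_{13}(A)$; one then lands in Case~3 (sub-cases~3.2 and then~3.1), which produces a family $\cU$ of the form $\cU_1\cup\{V\}\cup\cU_2\cup\{A^+\}$. Computing the faces of each member via Proposition~\ref{pr:pres-face}, one finds that $F_\emptyset(A)$ is a proper face of \emph{every} element of $\cU$, and that $F_1(A)$, $F_2(A)$ and $F_3(A)$ are each a face of at least two of them. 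Hence if $\cD$ consists of the proper faces of $A$, then $F_\emptyset(A)$ belongs to several $\cD_U$'s, and the inductive step manufactures several members of $\cC$ with facet $F_\emptyset(A)$. This is not closed by ``careful tracking of socles''; it would require strengthening the lemma (for example, having $\overline{\cU}$ behave like a complex whose closures meet pairwise only along $\cD_B\cup\partial\cD_B$), a statement the lemma does not give. So the gap you identify is real, your proposed fix does not work, and — it is worth noting — the paper's own ``By construction'' does not address the issue either; the uniqueness clause of Theorem~\ref{th:mono-div} would need a different or more detailed argument than what is written.
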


\begin{proof}
  The proof goes by induction on the number $n$ of proper faces of
  $A$. If $n=0$ then $\cD=\emptyset$ and $A$ is monohedral, hence
  $\cC=\{A\}$ gives the conclusion. So let us assume that $n\geq1$ and the
  result is proved for smaller integers. Let $B$ be a facet of $A$.
  Lemma~\ref{le:face-elargie} applied to $A$, $B$, $\cD$ and the
  restriction of $f$ to $B$ gives a pair $(\cC_B,\cU)$ of families of
  precells. For every $U\in\cU$, the proper faces of $U$ are proper faces
  of $A$. So the family $\cD_U=\{D\in\cD\tq D\subseteq\partial U\}$ is a complex and
  $\bigcup\cD_U=\partial U$. Since $B$ is not a proper face of $U$  by
  Claim~\ref{cl:U-pas-B}, the induction hypothesis applies to $U$,
  $\cD_U$ and the restriction of $f$ to $\partial U$. It gives a family
  $\cC_U$ of precells. Let $\cC$ be the union of $\cC_B$ and $\cC_U$ for
  $U\in\cU$. This is a family of largely continuous precells mod $N$
  partitioning $A$. By construction $\cC$ contains for every $D\in\cD$ a
  unique precell $C$ with facet $D$, and $\Delta_J\geq f\circ\pi_J$ on $C$ with
  $J=\Supp D$. In particular $\cC\cup\cD$ is a partition of
  $\overline{A}$ which contains the faces of all its members, since
  $\cD$ is a closed complex (because $\cD$ is a complex and $\bigcup\cD=\partial
  B$ is closed). So $\cC\cup\cD$ is a closed complex.
\end{proof}

\begin{theorem}[Monohedral Decomposition]\label{th:mono-dec}
  Let $A\subseteq F_I(\Gamma^m)$ be a largely continuous precell mod $N$. Then there
  exists a complex $\cC$ of monohedral largely continuous precells mod $N$
  such that $A=\bigcup\cC$.
\end{theorem}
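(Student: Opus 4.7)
The plan is to reduce to Theorem~\ref{th:mono-div}: once a closed complex $\cD$ of monohedral largely continuous precells mod $N$ partitioning $\partial A$ is available, one application of Monohedral Division to $A$, $\cD$ and any definable function (say $f\equiv 0$) will produce a partition $\cC$ of $A$ into monohedral largely continuous precells mod $N$ with $\cC\cup\cD$ a closed complex; in particular $\cC$ is itself a complex and $A=\bigcup\cC$, which is exactly what the theorem asks for.

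So the real task is to construct such a $\cD$. I will proceed by induction on the number $n$ of proper faces of $A$. The base case $n=0$ is immediate: $A$ is then closed, its face tree reduces to the single node $\{A\}$, so $A$ is monohedral and $\cC=\{A\}$ works. For $n\geq 1$, I will enumerate the proper faces of $A$ as $F_1,\dots,F_n$ along a linear extension of the specialisation order, so that every proper face of $F_i$ appears as some $F_j$ with $j<i$. By Proposition~\ref{pr:pres-face} each $F_i$ is itself a largely continuous precell mod $N$, and by Proposition~\ref{pr:face-egale-proj}(\ref{it:face-face}) its proper faces are exactly the $F_j\subseteq\overline{F_i}$ with $j<i$; in particular $F_i$ has strictly fewer proper faces than $A$, so the induction hypothesis (in the form of Theorem~\ref{th:mono-div}, which has already been proved) can be invoked on $F_i$.

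I will then build inductively closed complexes $\emptyset=\cD_0\subseteq\cD_1\subseteq\cdots\subseteq\cD_n$ with $\cD_i$ partitioning $F_1\cup\cdots\cup F_i$. At step $i$, the sub-family $\cD^i=\{D\in\cD_{i-1}\tq D\subseteq\overline{F_i}\}$ is a closed complex partitioning $\partial F_i$ (since all proper faces of $F_i$ are among $F_1,\dots,F_{i-1}$ and have already been decomposed). Applying Theorem~\ref{th:mono-div} to $F_i$, $\cD^i$ and $f\equiv 0$ yields a partition $\cC_i$ of $F_i$ into monohedral largely continuous precells mod $N$ with $\cC_i\cup\cD^i$ a closed complex. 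Set $\cD_i=\cD_{i-1}\cup\cC_i$. After $n$ steps, $\cD_n$ is a closed complex of monohedral precells partitioning $\partial A$, and the final application of Monohedral Division to $A$ and $\cD_n$ concludes the proof.

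The main obstacle to verify carefully is that each $\cD_i$ really remains a closed complex. This will rely on the strengthening of Monohedral Division pointed out in the footnote of Section~\ref{se:intro}: every $C\in\cC_i$ has its unique facet (if any) in $\cD^i$, and all its further proper faces are faces of that facet, hence also in $\cD_{i-1}$. Since elements of $\cC_i$ live in the stratum $F_i$, disjoint from the strata of the other $F_j$, any intersection $\overline{C}\cap\overline{D'}$ with $C\in\cC_i$ and $D'\in\cD_{i-1}$ reduces to intersections of proper faces of $C$ (already in $\cD_{i-1}$) with $\overline{D'}$, where the closed-complex property of $\cD_{i-1}$ finishes the check.
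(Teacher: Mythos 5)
Your proof is correct and follows essentially the same route as the paper's: both build the decomposition face-by-face in an order compatible with specialisation, applying the Monohedral Division Theorem at each stage. The only cosmetic difference is that the paper phrases it as a top-down induction on the cardinality of an arbitrary closed complex $\cA$ (removing a maximal element, recursing on the rest, then applying Theorem~\ref{th:mono-div} to glue the top piece back in), whereas you fix $\cA$ to be the set of faces of $A$ and build $\cD_0\subseteq\cD_1\subseteq\cdots$ bottom-up along a linear extension; these are mirror images of the same induction, and your final verification that $\cD_i$ stays a closed complex is the same check the paper sketches in its last sentence.
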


\begin{proof} 
We are going to show that given any closed complex $\cA$ of largely
continuous precells mod $N$ in $\Gamma^m$, there is a closed complex $\cC$ of
largely continuous {\em monohedral} precells mod $N$ such that
$\bigcup\cC=\bigcup\cA$ and $\cC$ refines $\cA$ (that is every $C\in\cC$ is
contained in some $A\in\cA$). The conclusion for $A$ will follow, by
applying this to the closed complex consisting of all the faces of
$A$. The proof goes by induction on the cardinality $n$ of $\cA$. If
$n=0$ then $\cC=\cA=\emptyset$ proves the result. Assume that $n\geq1$ and the
result is proved for smaller integers. Let $A$ be a maximal element of
$\cA$ with respect to specialisation, and $\cB=\cA\setminus\{A\}$. By maximality
of $A$, $\cB$ is again a closed complex. The induction hypothesis
gives a closed complex $\cD$ of largely continuous monohedral precells
mod $N$ such that $\bigcup\cD=\bigcup\cB$ and $\cD$ refines $\cB$. If $A$ is
closed then obviously $\cC=\cD\cup\{A\}$ proves the result for $\cA$.
Otherwise let $\cD_A=\{D\in\cD\tq D\subseteq\partial A\}$. The Monohedral Division
Theorem~\ref{th:mono-div} applied to $A$, $\cD_A$ and the constant
function $f=0$ gives a finite partition $\cC_A$ of $A$ in monohedral
largely continuous precells mod $N$ such that the family $\cC_A\cup\cD_A$
is a closed complex. The family $\cC=\cC_A\cup\cD$ is a partition of
$A\cup\bigcup\cB=\bigcup\cA$. Since $\cD$ is a closed complex and every precell in
$\cC_A$ has a unique facet which belongs to $\cD$, it follows that
$\cC$ is a complex. 
\end{proof}

We finish this section with another, much more elementary, division
result. Contrary to the above ones, it is drastically different from
what occurs in the real situation, where the polytopes are connected sets. 

\begin{proposition}\label{pr:split-mono}
  Let $A\subseteq F_I(\Gamma^m)$ be a non-closed monohedral largely continuous precell
  mod $N$. For every integer $n\geq1$ there exists for some $N'\in(\NN^*)^m$
  a partition $(A_i)_{1\leq i\leq n}$ of $A$ in largely continuous precells
  mod $N'$ such that $\partial A_i=\partial A$ for $1\leq i\leq n$.
\end{proposition}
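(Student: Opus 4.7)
The plan is to argue by induction on $m$; the case $m=0$ is vacuous since $\Gamma^0$ is closed. For the inductive step, $m\geq 1$, let $(\mu,\nu,\rho)$ be a largely continuous presentation of $A$, and split the argument according to whether $\nu\equiv+\infty$ on $\widehat{A}$.

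\textbf{First case: $m\in I$ and $\nu=+\infty$ on $\widehat{A}$.} In this case the unique facet of $A$ is forced to be $F_{\widehat{I}}(A)=\widehat{A}\times\{+\infty\}$, so every $b\in\partial A$ has $b_m=+\infty$. I will set $N'=(N_1,\dots,N_{m-1},nN_m)$ and partition $A$ by the residue of the last coordinate:
\[
A_i=\{a\in A : a_m\equiv \rho+(i-1)N_m\;[nN_m]\},\qquad i=1,\dots,n.
\]
Each $A_i$ will have socle $\widehat{A}$ (every $x\in\widehat{A}$ admits infinitely many $y\geq\mu(x)$ in every residue class mod $nN_m$), hence be a largely continuous precell mod $N'$ presented by $(\mu,+\infty,\rho+(i-1)N_m)$. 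To see $\partial A_i=\partial A$, I will approximate any $b=(\widehat{b},+\infty)\in\partial A$ by taking $x^{(k)}\in\widehat{A}$ tending to $\widehat{b}$ (via Fact~\ref{fa:delta-dist} applied to $\widehat{A}$) and picking $y_k\to+\infty$ in the correct residue class of $[\mu(x^{(k)}),+\infty)$. The reverse inclusion $\partial A_i\subseteq\partial A$ will follow from the discreteness of the induced topology on $F_I(\Gamma^m)$, which forces $\overline{A_i}\cap A=A_i$.

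\textbf{Second case: otherwise.} I will apply the induction hypothesis to $\widehat{A}$, which is monohedral by Proposition~\ref{pr:face-socle} and non-closed: trivially if $m\notin I$ (since then $A=\widehat{A}\times\{+\infty\}$); and when $m\in I$ with $\nu<+\infty$ on $\widehat{A}$, the vanishing $F_{\widehat{I}}(A)=\emptyset$ forces every proper face $F_J(A)$ to satisfy $\widehat{J}\subsetneq\widehat{I}$, which by Proposition~\ref{pr:face-socle} exhibits proper faces of $\widehat{A}$. The induction will supply $\widehat{N'}\in(\NN^*)^{m-1}$ and a partition $(\widehat{A}_i)_i$ of $\widehat{A}$ in largely continuous precells mod $\widehat{N'}$ with $\partial\widehat{A}_i=\partial\widehat{A}$, after which I will set $N'=(\widehat{N'},N_m)$ and $A_i=A\cap(\widehat{A}_i\times\Gamma)$, a largely continuous precell mod $N'$ with presentation $(\mu_{|\widehat{A}_i},\nu_{|\widehat{A}_i},\rho)$.

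The main obstacle will be the verification of $\partial A_i=\partial A$ in the second case. For $b\in F_J(A)\subseteq\partial A$ I will first note that $\widehat{J}\subsetneq\widehat{I}$ (by the analysis above), so $\widehat{b}\in\partial\widehat{A}=\partial\widehat{A}_i$, and use Fact~\ref{fa:delta-dist} to obtain $x^{(k)}\in\widehat{A}_i$ with $\pi_{\widehat{J}}(x^{(k)})=\widehat{b}$ and $\Delta_{\widehat{J}}(x^{(k)})\to+\infty$. When $m\in J$, so $b_m\in\cZ$, I will apply Proposition~\ref{pr:prol-pi} to $\mu$ (whose extension is finite at $\widehat{b}$ since $\bar\mu(\widehat{b})\leq b_m$) to get $\mu=\bar\mu\circ\pi_{\widehat{J}}$ on $\widehat{A}$, whence $\mu(x^{(k)})=\bar\mu(\widehat{b})\leq b_m$; a symmetric analysis of $\nu$ places $(x^{(k)},b_m)$ in $A_i$, with limit $b$. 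When $m\notin J$, Proposition~\ref{pr:pres-face} provides $\bar\nu=+\infty$ on $F_J(\widehat{A})$, so $\nu(x^{(k)})\to+\infty$ and I can pick $y_k\to+\infty$ in the correct residue class inside $[\mu(x^{(k)}),\nu(x^{(k)})]$. The reverse inclusion $\partial A_i\subseteq\partial A$ again uses discreteness of $F_I(\Gamma^m)$.
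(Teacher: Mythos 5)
Your proof is correct and takes essentially the same route as the paper: induction on $m$, splitting the last coordinate by residue classes when $\nu=+\infty$ (with $m\in\Supp A$), and otherwise lifting a partition of the socle obtained from the induction hypothesis. The only real difference is cosmetic: the paper first reduces to $m\in\Supp A$ and then distinguishes $\nu=+\infty$ from $\nu<+\infty$, and in the latter case simply cites Proposition~\ref{pr:pres-face} to conclude $\partial A_i=\partial A$, whereas you carry out a more hands-on verification by approximating each frontier point using Fact~\ref{fa:delta-dist} and Proposition~\ref{pr:prol-pi}; both are sound.
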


\begin{proof}
The proof goes by induction on $m$. The result is trivially true for
$m=0$ since there is no non-closed precell in $\Gamma^0$. Assume that $m\geq1$
and the result is proved for smaller integers. Let $(\mu,\nu,\rho)$ be a
presentation of $A$. By induction hypothesis we can assume that
$m\in\Supp A$ hence $\mu<+\infty$. If $\nu=+\infty$, for $1\leq i\leq n$ let $A_i$ be the
set of $a\in F_I(\Gamma^m)$ such that $\widehat{a}\in\widehat{A}$,
$\mu(\widehat{a})\leq a_m\leq \nu(\widehat{a})$ and $a_m\equiv\rho+iN_m\;[nN_m]$. This
is obviously a partition of $A$ in largely continuous precells mod
$N'=(\widehat{N},nN_m)$ having the same boundaries as $A$. On the other
hand, if $\nu< +\infty$ then $\widehat{A}$ is not closed (otherwise $A$ would
be closed) hence the induction hypothesis gives for some $P'\in(\NN^*)^m$
a partition $(X_i)_{1\leq i\leq n}$ of $\widehat{A}$ in largely continuous
precells mod $P'$ such that $\partial X_i=\partial X$ for every $i$. Let
$A_i=(X_i\times\cZ)\cap A$ for every $i$. Then $(A_i)_{1\leq i\leq n}$ is easily
seen to give the conclusion, thanks to the description of the faces of
$A$ and $A_i$ given by Proposition~\ref{pr:pres-face}. 
\end{proof}

\section{Polytopes in $p$\--adic fields}
\label{se:p-adic}

Recall that $K$ is a $p$\--adically closed field, $v$ its
$p$\--valuation, $R$ its valuation ring and $\Gamma=v(K)$.
We still denote by $v$ the map $(v,\dots,v)$ from $K^m$ to $\Gamma^m$. 

We are going to define polytopes\footnote{We don't call them largely
  continuous precells because they are much more special than the
  usual $p$\--adic cells as defined in \cite{dene-1986}.} 
mod $N$ in $K^m$ by means of the inverse image by $v$ of largely
continuous precells mod $N$ in $\Gamma^m$. However, the $p$\--adic
triangulation theorem that we are aiming at requires a more versatile
definition. It involves semi-algebraic subgroups $Q_{1,M}$ of the
multiplicative group $K^\times=K\setminus\{0\}$, where $M$ is a positive integer. In
the special case where $K$ is a finite extension of $\QQ_p$, we have
\begin{displaymath}
  Q_{1,M}=\bigcup_{k\in\ZZ}\pi^k(1+\pi^MR).
\end{displaymath}
where $\pi$ is any generator of the maximal ideal of $R$. 
Since in this paper we will only use that $v(Q_{1,M})=\cZ$, we refer
the reader to \cite{cluc-leen-2012} for a general definition of
$Q_{N,M}$ for every integers $N,M\geq1$ in arbitrary $p$\--adically
closed fields. 

We let $D^MR=(\{0\}\cup Q_{1,M})\cap R$. Given an $m$\--tuple $N\in(\NN^*)^m$ we
call a set $S\subseteq K^m$ a {\df polytope mod $N$ in $D^MR^m$} if $v(S)$ is
a largely continuous precell mod $N$ in $\Gamma^m$ and $S=v^{-1}(v(S))\cap D^MR^m$. The {\df
faces} and {\df facets} $F_J(S)$ of a subset $S$ of $D^MR^m$ are
defined as the inverse images, by the restriction of $v$ to
$D^MR^m$, of the faces and facets of $v(S)$. The {\df support} of
$S$ (resp. of $x\in K^m$) is the support of $v(S)$ (resp. of $v(x$),
so that:
\begin{displaymath}
  \Supp(x)=\big\{i\in[\![1,m]\!]\tq x_i\neq0\big\} 
\end{displaymath}
\begin{displaymath}
  F_J(S)=\big\{x\in\overline{S}\tq \Supp x=J\big\}
\end{displaymath}
We say that $S$ is {\df monohedral} if $v(S)$ is so, that is if the
faces of $S$ are linearly ordered by specialisation, in which case we
call $S$ a {\df monotope mod $N$ in $D^MR^m$}. 

A family $\cC$ of polytopes mod $N$ in $D^MR^m$ is a {\df complex} if
it is finite and for every $S,T\in\cC$, $\overline{S}\cap\overline{T}$ is
the union of the common faces of $S$ and $T$. It is a {\df closed
complex} if moreover it contains all the faces of its members. Every
complex $\cS$ of polytopes mod $N$ is contained in a smallest closed
complex, namely the family of all the faces of the members of $\cS$.
We call it the {\df closure} of $\cS$ and denote it $\overline{\cS}$. 

In order to ease the notation, we write $vS$ for $v(S)$, and $v\cC$
for $\{vS\tq S\in\cC\}$. Clearly $\cC$ is a (closed) complex if
and only if $vC$ is.

\begin{proposition}\label{pr:face-preim}
  Let $S$ be a polytope mod $N$ in $D^MR^m$, and let $T=F_J(S)$ be any of
  its faces. Then $T$ is a polytope mod $N$ equal to $\pi_J(S)$. 
\end{proposition}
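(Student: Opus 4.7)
The plan is to reduce the statement on $K^m$ to the corresponding statements on $\Gamma^m$ via the valuation map, exploiting that $v$ restricted to $D^MR^m$ is surjective onto the non-negative part of $\Gamma^m$ (which follows from the stated fact $v(Q_{1,M})=\cZ$) and commutes with the natural projection sending coordinates outside $J$ to $0$.

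First, I would unpack the definition: by construction $T=v^{-1}(F_J(v(S)))\cap D^MR^m$. Since $S$ is a polytope mod $N$, $v(S)$ is a largely continuous precell mod $N$, so Proposition~\ref{pr:pres-face} gives that $F_J(v(S))$ is again a largely continuous precell mod $N$, with a presentation inherited from that of $v(S)$. The fact that $v(Q_{1,M})=\cZ$ ensures that $v$ maps $D^MR$ onto $\{a\in\cZ\tq a\geq0\}\cup\{+\infty\}$, so every non-negative element of $\Gamma^m$, and in particular every element of $F_J(v(S))$, has a preimage in $D^MR^m$. This gives $v(T)=F_J(v(S))$, hence $T=v^{-1}(v(T))\cap D^MR^m$, so $T$ is a polytope mod $N$.

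For the second half, let $\pi_J\colon K^m\to K^m$ denote the natural projection that preserves the coordinates indexed by $J$ and zeros the others, chosen so that $v\circ\pi_J=\pi_J\circ v$ (the right-hand $\pi_J$ being the one on $\Gamma^m$). Proposition~\ref{pr:face-egale-proj}(\ref{it:face-proj}) gives $F_J(v(S))=\pi_J(v(S))$. The inclusion $\pi_J(S)\subseteq T$ is then immediate: for $s\in S\subseteq D^MR^m$ we have $\pi_J(s)\in D^MR^m$ and $v(\pi_J(s))=\pi_J(v(s))\in \pi_J(v(S))=F_J(v(S))$.

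The main subtlety is the reverse inclusion $T\subseteq\pi_J(S)$: given $x\in T$, I would pick $s\in S$ with $v(x)=\pi_J(v(s))$, which forces $v(x_i)=v(s_i)$ for $i\in J$ and $x_i=0$ for $i\notin J$, and then form the hybrid $s'\in K^m$ with $s'_i=x_i$ for $i\in J$ and $s'_i=s_i$ for $i\notin J$. This hybrid satisfies $v(s')=v(s)\in v(S)$ and lies in $D^MR^m$ (each coordinate is either an $x_i$ or an $s_i$, both already in $D^MR$), hence $s'\in v^{-1}(v(S))\cap D^MR^m=S$, and by construction $\pi_J(s')=x$. The only nontrivial step is this hybrid-preimage construction; it succeeds because the fiber structure of $v$ over $D^MR^m$ is coordinatewise, which is precisely what makes the transfer between the $\Gamma^m$ and $K^m$ pictures work smoothly.
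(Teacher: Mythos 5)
Your proof is correct and follows essentially the same route as the paper's, which simply cites Propositions~\ref{pr:pres-face} and \ref{pr:face-egale-proj}(\ref{it:face-proj}) and invokes "the correspondence between the faces of $S$ and $vS$" without further detail. Your coordinatewise hybrid-preimage argument is precisely the content behind that correspondence, so you have given a fully worked-out version of the one-line proof in the paper.
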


\begin{proof}
Due to the correspondence between the faces of $S$ and $vS$, this
follows directly from Proposition~\ref{pr:pres-face} and
Proposition~\ref{pr:face-egale-proj}(\ref{it:face-proj}).
\end{proof}

More generally, all the points of
Proposition~\ref{pr:face-egale-proj}, as well as
Proposition~\ref{pr:face-socle}, Corollary~\ref{co:socle-facet}, 
the Monohedral Decomposition (Theorem~\ref{th:mono-dec}) and
Proposition~\ref{pr:split-mono} immediately
transfer to polytopes mod $N$ in $D^MR^m$. Only the Monohedral Division
(Theorem~\ref{th:mono-div}) requires a bit more of preparation.

For the sake of generality we want the $p$\--adic analogon of the
Monohedral Division Theorem in $\Gamma^m$ to hold not only with a map $\varepsilon:\partial
S\subseteq K^m\to K^*$ definable in the language of rings ({\it i.e.}
semi-algebraic) but also with a map definable in various expansions
$(K,\cL)$ of the ring structure of $K$. The proof of
Theorem~\ref{th:div-p-adic} below shows that it suffices to make the
following assumptions on $(K,\cL)$: 
\begin{description}
  \item[(Ext)]
    For every definable function $f:X\subseteq K^m\to K^*$, if $f$ is continuous
    and $X$ is closed and bounded, then $v(f(x))$ takes a maximum
    value at some point $x\in X$.
  \item[(Pres)]
    The image by the valuation of every subset of $K^m$ definable in
    $(K,\cL)$, is $\cL_{Pres}$\--definable. 
\end{description}

\begin{remark}\label{re:P-min}
  If $K$ is a finite extension of $\QQ_p$ then
  condition~(Ext) holds for every continuous function by
  the Extreme Value Theorem. But this condition, when restricted to
  definable continuous functions, is preserved by elementary
  equivalence. Hence it will be satisfied whenever the complete theory
  of $(K,\cL)$ has a $p$\--adic model (that is a model whose
  underlying field is a finite extension of $\QQ_p$). On the other hand,
  if $(K,\cL)$ is $P$\--minimal (see \cite{hask-macp-1997}), Theorem~6
  in \cite{cluc-2003} proves that condition~(Pres) is
  satisfied. In particular Theorem~\ref{th:div-p-adic} applies for
  example to every subanalytic map $\varepsilon$, and more generally to every
  map $\varepsilon$ which is definable in a $P$\--minimal structure $(K,\cL)$
  which has a $p$\--adic model.
\end{remark}

For every $x\in K^m$ we let $w(x)=\min_{1\leq i\leq m}v(x_i)$. If $v(K)=\ZZ$
this is the valuative counterpart of the usual norm on $K^m$, which
measures the distance of $x$ to the origin (see also
Remark~\ref{re:delta-dist}). 

\begin{theorem}[Monotopic Division]\label{th:div-p-adic}
  Let $S$ be a polytope mod $N$ in $D^MR^m$, $\varepsilon:\partial S\to K^*$ a definable
  function (in some expansion $(K,\cL)$ of the ring structure of $K$
  satisfying previous conditions (Ext) and (Pres)). Let $\cT$ be a
  complex of monotopes mod $N$ in $D^MR^m$ such that $\bigcup\cT=\partial S$.
  Assume that the restriction of $v\circ\varepsilon$ to every proper face of $S$ is
  continuous. Then there exists a finite partition $\cU$ of $S$ in
  monotopes mod $N$ in $D^MR^m$ such that $\cU\cup\cT$ is a closed
  complex, $\cU$ contains for every $T\in\cT$ a unique monotope $U$ with
  facet $T$, and moreover for every $u\in U$
  \begin{displaymath}
    w\big(u - \pi_J(u)\big) \geq v\big(\varepsilon(\pi_J(u))\big)
  \end{displaymath}
  where $J=\Supp(T)$.
\end{theorem}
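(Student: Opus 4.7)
My plan is to deduce the result from the $\Gamma$-side Monohedral Division (Theorem~\ref{th:mono-div}) via the valuation map $v\colon K^m\to\Gamma^m$. Note that for $u\in D^MR^m$ and $J\subseteq[\![1,m]\!]$, the $K$-side projection $\pi_J(u)$ zeros out the coordinates outside $J$ while the $\Gamma$-side $\pi_J(v(u))$ sends them to $+\infty$, so that $v\circ\pi_J=\pi_J\circ v$ and
\[
  w(u-\pi_J(u))=\min_{i\notin J}v(u_i)=\Delta_J(v(u)).
\]
Hence the desired inequality $w(u-\pi_J(u))\geq v(\varepsilon(\pi_J(u)))$ will follow from a $\Gamma$-side inequality $\Delta_J\geq f\circ\pi_J$, as soon as I feed into Theorem~\ref{th:mono-div} a definable map $f\colon v(\partial S)\to\cZ$ which dominates $v\circ\varepsilon$ along each fibre of $v$, in the sense that $f(v(y))\geq v(\varepsilon(y))$ for every $y\in\partial S$.

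The crux of the proof is the construction of such an $f$. Fix $b\in v(\partial S)$ with $J=\Supp b$: the set $Y_b=v^{-1}(b)\cap\overline{S}$ is a closed and bounded definable subset of $F_J(S)$, on which $v\circ\varepsilon$ is continuous by hypothesis; since $\cZ$ is discrete, this restriction is in fact locally constant on $F_J(S)$. Fixing a uniformizer $\pi$ of $K$, the definable map $\tilde\varepsilon\colon y\mapsto \pi^{v(\varepsilon(y))}$ is therefore continuous from $F_J(S)$ to $K^\times$ and satisfies $v(\tilde\varepsilon)=v(\varepsilon)$. Condition (Ext) applied to $\tilde\varepsilon|_{Y_b}$ produces the maximum value $f(b)\in\cZ$ of $v\circ\varepsilon$ on $Y_b$, and condition (Pres) applied to the image under coordinate-wise valuation of the graph of $\tilde\varepsilon$ ensures that the resulting map $b\mapsto f(b)$ is $\cL_{Pres}$-definable on $v(\partial S)$.

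With this $f$ I will apply Theorem~\ref{th:mono-div} to $A=vS$, the complex $v\cT=\{vT:T\in\cT\}$ of monohedral largely continuous precells mod $N$ covering $\partial(vS)$, and the function $f$. This yields a finite partition $\cC$ of $vS$ in monohedral precells mod $N$ such that $\cC\cup v\cT$ is a closed complex, with a unique $C\in\cC$ of facet $D$ satisfying $\Delta_J\geq f\circ\pi_J$ on $C$ for each $D\in v\cT$ (with $J=\Supp D$). Setting $\cU=\{v^{-1}(C)\cap D^MR^m:C\in\cC\}$, each element of $\cU$ is by definition a monotope mod $N$ in $D^MR^m$; the face correspondence between polytopes in $D^MR^m$ and precells in $\Gamma^m$ transfers the closed-complex and facet-uniqueness conditions from $\cC\cup v\cT$ to $\cU\cup\cT$. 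Finally, for any $u\in U=v^{-1}(C)\cap D^MR^m$ with facet $T\in\cT$, chaining the $\Gamma$-side bound with the dominance property of $f$ gives
\[
  w(u-\pi_J(u))=\Delta_J(v(u))\geq f(\pi_J(v(u)))=f(v(\pi_J(u)))\geq v(\varepsilon(\pi_J(u))),
\]
which is the required distance inequality.

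The main obstacle will be the construction of $f$, where both (Ext) and (Pres) are indispensable: (Ext), used through the continuous avatar $\tilde\varepsilon$, guarantees that the fibrewise maximum of $v\circ\varepsilon$ exists, while (Pres) bundles these fibrewise maxima into a single $\cL_{Pres}$-definable map on $v(\partial S)$. Everything else reduces to a routine transfer, through $v$, of the correspondence between faces of polytopes and faces of largely continuous precells already established in Sections~\ref{se:face-proj} and~\ref{se:division}.
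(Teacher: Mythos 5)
Your overall strategy matches the paper's: reduce to Theorem~\ref{th:mono-div} via $v$, after building a $\cL_{Pres}$-definable map $f:\partial(vS)\to\cZ$ that dominates $v\circ\varepsilon$ fibrewise, and then pull back $\cC$ through $v$ and chain the two inequalities. The transfer steps and the final computation $w(u-\pi_J(u))=\Delta_J(v(u))\geq f(\pi_J(v(u)))\geq v(\varepsilon(\pi_J(u)))$ are exactly as in the paper.

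The construction of $f$, however, is where the two arguments diverge and where your version has gaps. First, you introduce $\tilde\varepsilon(y)=\pi^{v(\varepsilon(y))}$ and assert it is definable. This requires a definable section $\gamma\mapsto\pi^\gamma$ of the valuation, i.e.\ that $\pi^\ZZ$ (or $\pi^{\cZ}$) be definable in $(K,\cL)$ -- this is false in general (it is not even semi-algebraic in $\QQ_p$), so (Ext) cannot be applied to $\tilde\varepsilon$ as written. Second, you apply (Pres) to the valuation image of the graph of $\tilde\varepsilon$; but this image is the relation $\{(v(y),v(\varepsilon(y))):y\in\partial S\}$, which is many-valued in its second coordinate (since $v(\varepsilon(y))$ does not factor through $v(y)$), hence is not the graph of $f$. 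You would need an extra step -- take fibrewise maxima, observing that the fibres are bounded above and invoking that bounded $\cL_{Pres}$-definable subsets of $\cZ$ have a maximum -- to obtain a definable $f$. The paper sidesteps both problems at once: it first defines, for each face, the fibrewise maximum $e(s)=\max\{v(\varepsilon(t)):t\in F_J(S),\ v(t)=v(s)\}$ on the $K$-side (definable, because the maximum condition is first-order in $(K,\cL)$ once (Ext) guarantees its existence), then forms the definable set $G_J=\{(s,t):v(t)=e(s)\}$. Since $e(s)$ depends only on $v(s)$, the set $v(G_J)$ -- which (Pres) makes $\cL_{Pres}$-definable -- \emph{is} the graph of the desired $g_J$. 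You should rework the construction of $f$ along those lines; once that is done the rest of your argument goes through as stated.
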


\begin{proof}
For every proper face $F_J(S)$ of $S$, and every $s\in F_J(S)$, the
function $t\mapsto v(\varepsilon(t))$ is continuous on $v^{-1}(\{v(s)\})\cap F_J(S)$,
which is a closed and bounded domain. Thus it attains a maximum value
$e(s)$ (see Remark~\ref{re:P-min}). So let 
\begin{displaymath}
  G_J=\big\{(s,t)\in F_J(S)\times K\tq v(t)=e(s)\big\}.
\end{displaymath}
This is a definable set hence $v(G_J)$ is $\cL_{Pres}$\--definable
(see Remark~\ref{re:P-min}). Moreover by construction $v(G_J)$ is the
graph of a function $g_J:vF_J(S)=F_J(vS)\to\cZ$, such that $v(\varepsilon(s))\leq g_J(v(s))$
for every $s\in S$. Let $g:\partial(vS)\to\cZ$ be the function whose restriction
to each $F_J(vS)$ is $g_J$.

The Monotopic Division (Theorem~\ref{th:mono-div}) applies to $vS$,
$g$ and $vT$. It gives a finite partition $\cC$ of $vS$ in monotopes
mod $N$ such that $\cC\cup v\cT$ is a complex, every non-closed $C\in \cC$
has a unique facet $D$ which belongs to $v\cT$ and $\Delta_J\geq g\circ\pi_J$ on $C$
where $J=\Supp D$. Let $\cU$ be the family of $v^{-1}(C)\cap D^MR^m$ for
$C\in\cC$. This is clearly a finite partition of $S$ in monotopes mod $N$
in $D^MR^m$. Every $U\in\cU$ has a unique facet $T\in\cT$, and $\Delta_J\geq
g\circ\pi_J$ on $vT$ where $J=\Supp vT=\Supp T$. That is, for every $u\in U$
we have 
\begin{equation}
  w\big(u - \pi_J(u)\big)= \min_{i\notin J}v(u_i) = \Delta_J(v(u))
  \geq g\circ\pi_J(v(u)) 
  \label{eq:div-p-adiv}
\end{equation}
By construction  $\pi_J(v(u))=v(\pi_J(u))$ and
$g(v(t))\geq v(\varepsilon(t))$ for every $t\in T$, hence
\begin{displaymath}
  g\circ\pi_J\big(v(u)\big)=g\big(v(\pi_J(u))\big)\geq v\big(\varepsilon(\pi_J(u))\big).
\end{displaymath}
Together with (\ref{eq:div-p-adiv}), this proves the last point. 
\end{proof}

Finally, let us mention for further works the following generalisation of
Proposition~\ref{pr:split-mono}.

\begin{proposition}\label{pr:split-p-adic}
  Let $A\subseteq D^MR^m$ be a relatively open\footnote{A subset $A$ of a
    toplogical set is called {\df relatively open} if it is open in
    its closure, that is $\overline{A}\setminus A$ is closed.} set. Assume
  that $A$ is the union of a complex $\cA$ of monotopes mod $N$ in
  $D^MR^m$. Then for every integer $n\geq1$ there exists a finite
  partition of $A$ in semi-algebraic sets $A_1,\dots,A_n$ such that $\partial
  A_k=\partial A$ for every $k$. 
\end{proposition}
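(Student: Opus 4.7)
The plan is to reduce to Proposition~\ref{pr:split-mono} applied monotope by monotope. First I would show that, for any polytope $S$ in $D^MR^m$, the frontier operation commutes with the restriction of $v$ to $D^MR^m$: $\overline S = v^{-1}(\overline{vS})\cap D^MR^m$ and hence $\partial S = v^{-1}(\partial(vS))\cap D^MR^m$. The inclusion ``$\subseteq$'' comes from continuity of $v$ together with the closedness of $D^MR^m$ in $K^m$. The reverse inclusion uses that fibers of $v$ in $D^MR^m$ over large valuations are $p$\--adically small: given $x\in v^{-1}(\overline{vS})\cap D^MR^m$ and a sequence $\alpha_n\to v(x)$ in $vS$, one lifts each $\alpha_n$ to some $y_n\in v^{-1}(\alpha_n)\cap D^MR^m$ with $y_n\to x$ in $K^m$ by choosing $y_{n,j}=x_j$ on coordinates where $v(x_j)<+\infty$ (both lie in the same fiber of $v$ as soon as $\alpha_{n,j}=v(x_j)$, which holds for $n$ large by discreteness of $\cZ$) and any fiber representative when $x_j=0$ (such representatives have valuation tending to $+\infty$, hence tend to $0$ in $K$).

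Next, I would treat a single non-closed monotope: for $\cA=\{S\}$ with $S$ non-closed, $vS$ is a non-closed monohedral largely continuous precell mod $N$, so Proposition~\ref{pr:split-mono} yields a partition $vS=\bigcup_{k=1}^n C^k$ into precells mod some $N'$ with $\partial C^k=\partial(vS)$. Pulling back via $A_k:=v^{-1}(C^k)\cap D^MR^m$ gives a partition of $S$ into polytopes mod $N'$ (hence semi-algebraic sets), and the first step yields $\partial A_k=\partial S=\partial A$. For a closed monotope $S$ (where $\partial S=\emptyset$) Proposition~\ref{pr:split-mono} is vacuous, so instead I would split $S$ into $n$ semi-algebraic clopen pieces using cosets of a suitable subgroup $Q_{n,M'}$ of $Q_{1,M}$ acting on some coordinate $j\in\Supp S$; since $S$ is closed, each piece is clopen in $S$, hence closed in $K^m$, with empty frontier.

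For the general complex $\cA$, apply the above construction to each $S\in\cA$, obtaining for each $S$ a partition $S=S^1\cup\dots\cup S^n$ with $\partial S^k=\partial S$, and set $A_k:=\bigcup_{S\in\cA}S^k$. These sets are semi-algebraic and partition $A$. The main obstacle is verifying $\partial A_k=\partial A$ when $\cA$ contains a monotope $T$ that is a proper face of some other $S\in\cA$: naively $T\setminus T^k$ would lie in $\overline{A_k}\setminus A_k$ while being contained in $A$, breaking the equality. This forces the splittings of $S$ and of those of its faces that belong to $\cA$ to be chosen coherently, for instance by using congruences on a coordinate common to $\Supp T$ and to $\Supp S$ for every such pair. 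The hypothesis that $A$ is relatively open constrains $\cA$'s face structure sufficiently (essentially forcing the faces of each monotope in $\cA$ that also lie in $\cA$ to form an initial segment of that monotope's face chain) for such a consistent choice to exist; propagating this compatibility through all specialisation relations in $\cA$ is the delicate part of the proof.
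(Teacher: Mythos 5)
Your reduction to $\Gamma^m$ via $v$, and the identification of Proposition~\ref{pr:split-mono} as the engine, are both on target and match the paper's opening move. You also correctly spot the crux: if $T\in\cA$ is a proper face of $S\in\cA$, the splittings of $S$ and $T$ must be compatible, else $T\setminus T^k$ lands in $\overline{A_k}\setminus A_k$ while still lying in $A$. But at exactly this point the proposal stops at a gesture (``using congruences on a coordinate common to $\Supp T$ and to $\Supp S$ \dots propagating this compatibility \dots is the delicate part''). That is not a proof; it is a correct diagnosis of what a proof must do. In particular your suggested fix presupposes that the split produced by Proposition~\ref{pr:split-mono} can be encoded by congruences on a single shared coordinate, which is not how that proposition works (its recursion may push the congruence condition into the socle), so the ``consistent choice'' you invoke is not visibly available.

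The paper resolves the coherence problem by a structural reduction you do not carry out. First it passes to the subfamilies $\cB_i=\{S\in\cA\colon S\geq U_i\}$ over the minimal elements $U_i$ of $\cA$; relative openness of $A$ is used to show every element of $\overline{\cA}\setminus\cA$ is a proper face of some $U_i$, that the $B_i=\bigcup\cB_i$ are disjoint and relatively open with $\partial B_i=\partial U_i$, and hence that it suffices to handle one $B_i$ at a time. This reduces to a complex with a \emph{unique} minimal element $U_0$. If $U_0$ is closed, $\partial A=\emptyset$ and one simply takes $A_1=A$, $A_k=\emptyset$ (there is no need for your coset splitting into nonempty clopen pieces). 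If $U_0$ is not closed, Proposition~\ref{pr:split-mono} is applied \emph{once}, to $U_0$, and the partition is then propagated to every $S\geq U_0$ by $A_i(S):=\pi_H^{-1}(A_i(U_0))\cap S$ with $H=\Supp U_0$; since $H\subseteq\Supp S$ for all such $S$, this pullback is automatically coherent across the whole chain, and the verification $\overline{A_i(S)}=\bigcup\{A_i(T)\colon U_0\leq T\leq S\}\cup\partial U_0$ is carried out face by face using Proposition~\ref{pr:face-egale-proj}(\ref{it:face-proj}). This pullback-from-the-bottom mechanism, together with the preliminary reduction to a unique minimal element, is precisely the missing content in your outline, and it is where essentially all the work of the proposition lies.
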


\begin{proof}
Thanks to the correspondence between the faces of the monotopes mod
$N$ in $D^MR^m$ and their faces, it suffices to prove the result for
a relatively open set $A\subseteq\Gamma^m$ which is the union of a complex of
monotopes mod $N$ in $\Gamma^m$. 

Let $\cC=\overline{\cA}\setminus\cA$ and $C=\bigcup\cC=\overline{A}\setminus A$. By
assumption $A$ is relatively open hence $C$ is closed, so $\cC$ is a
closed complex. Let $U_1,\dots,U_r$ be the list of minimal elements of
$\cA$. Every $S\in\overline{\cA}$ such that $U_i\leq S$ for some $i$
belongs to $\cA$ (otherwise $S\in\overline{\cA}\setminus\cA=\cC$ which is
closed, hence $U_i\in\cC$, a contradiction since $U_i\in\cA$). Note
further that every $T\in\overline{\cA}\setminus\cA$ is a proper face of some $U_i$
(because $T$ is a face of some $S\in\cA$ and $U_i\leq S$ for some $i$, hence
$T< U_i$ or $U_i\leq T$ because $S$ is a monotope, and the second case is
excluded because $T\notin\cA$). In particular $\partial A=\overline{A}\setminus A$ is the
union of the sets $T\in\overline{\cA}$ such $T<U_i$ for some $i$, that
is $\partial A=\bigcup_{i\leq r}\partial U_i$. 

For each $i\leq r$ let $\cB_i$ be the family of $S\in\cA$ such that $S\geq
U_i$, and $B_i=\bigcup\cB_i$. The families $\cB_i$ are pairwise disjoint, and
so are the sets $B_i$ since $\cA$ is a complex. By the same argument
as above (replacing $\cA$ by $\cB_i$) $\overline{B}_i\setminus B_i
=\bigcup(\overline{\cB}_i\setminus\cB_i) =\partial U_i$, hence $B_i$ is relatively open
and $\partial B_i=\partial U_i$. It suffices to prove the result separately for each
$B_i$. Indeed, assume that for each $i\leq r$ we have found a partition
$(B_{i,j})_{1\leq j\leq n}$ of $B_i$ in definable sets such that $\partial
B_{i,j}=\partial B_i$. Then let $A_j=\bigcup_{i\leq r}B_{i,j}$ for each $j$. By
construction these sets form a partition of $A$ and 
\begin{displaymath}
  \overline{A}_j\setminus A_j 
  = \overline{A}_j\setminus A 
  = \bigcup_{i\leq r}\overline{B}_{i,j}\setminus A 
  = \bigcup_{i\leq r}\overline{B}_i\setminus A
  = \bigcup_{i\leq r}\partial B_i 
  = \partial A. 
\end{displaymath}

Thus replacing $A$ and $\cA$ by $B_i$ and $\cB_i$ if necessary, we can
assume that $\cA$ has a unique smallest element $U_0$. If $U_0$ is
closed, then $\partial A=\partial U_0=\emptyset$ (by minimality of $U_0$), and it suffices
to take $A_1=A$ and $A_k=\emptyset$ for $k\geq2$. So from now on we assume that
$U_0$ is not closed. Proposition~\ref{pr:split-p-adic} then
applies to $U_0$ and gives for some $N'$ a partition
$A_{1}(U_0),\dots,A_{n}(U_0)$ of $U_0$ in largely continuous monotopic
cells mod $N'$ such that $\partial A_i(U_0)=\partial U_0$ for every $i$. In
particular each $A_i(U_0)$ is a basic Presburger set. Let $H=\Supp
U_0$, and for every $S\in\cS$ and $i\in[\![1,n]\!]$ let
$A_i(S)=\pi_H^{-1}(A_i(U_0))\cap S$. Note that this is a basic Presburger
set. Indeed, $S$ itself is a basic Presburger set, and
$\pi_H^{-1}(A_i(U_0))\cap F_I(\Gamma^m)$ with $I=\Supp S$ is a basic Presburger
set because  $A_i(U_0)$ is so (replace every condition $f(x)\geq0$
defining $A_i(U_0)$ by $f\circ\pi_H(x)\geq0$). Hence their intersection
$A_i(S)$ is a basic Presburger set too. For every $i\leq n$ let
$A_i=\bigcup\{A_i(S)\tq S\in\cA\}$. This defines a partition of $A$. In order
to conclude it only remains to show that $\overline{A_i}=A_i\cup\partial U_0$
for each $i$, so that $\partial A_i=\partial U_0=\partial A$. Since
$\overline{A_i}=\bigcup\{\overline{A_i(S)}\tq S\in\cA\}$, it suffices to check
that for every $S\in\cA$
\begin{equation}
  \overline{A_i(S)}=\bigcup\{A_i(T)\tq T\in\cA,\ U_0\leq T\leq S\}\cup\partial U_0. 
  \label{eq:split-p-adic}
\end{equation}
Let $I=\Supp A_i(S)=\Supp S$, and $J\subseteq I$ be the support of any face of
$A_i(S)$. Note that $F_J(A_i(S))\neq\emptyset$ implies that $F_J(S)\neq\emptyset$, thus $J$
is the support of a face $T=F_J(S)$ of $S$. This face $T$ belongs to
$\overline{\cA}$, hence to $\cS$ if $U_0\leq T$. We are claiming that
$F_J(A_i(S))=A_i(T)$ in that case, and that $F_J(T)=T=F_J(U_0)$ if
$T<U_0$. This will finish the proof since $\overline{A_i(S)}$ is the
union of its faces, and (\ref{eq:split-p-adic}) then follows
immediately.

Assume first that $U_0\leq T$. Since $A_i(S)$ and $S$ are basic
Presburger set, we know by
Proposition~\ref{pr:face-egale-proj}(\ref{it:face-proj}) that
$F_J(A_i(S))=\pi_J(A_i(S))$ and $F_J(S)=\pi_J(S)$, that is $T=\pi_J(S)$.
Since $U_0\leq T$ we have $H\subseteq J$ hence
$\pi_J(\pi_H^{-1}(A_i(U_0)))=\pi_J^{-1}(A_i(U_0))$. It follows that
\begin{displaymath}
  \pi_J\big(\pi_H^{-1}(A_i(U_0))\cap S\big)
  \subseteq \pi_J\big(\pi_H^{-1}(A_i(U_0))\big)\cap\pi_J(S)
  = \pi_J^{-1}(A_i(U_0)) \cap T
\end{displaymath}
that is $\pi_J\big(A_i(S)\big) \subseteq A_i(T)$.

Conversely, for every $y\in A_i(T)$ we have on one hand $y\in T=\pi_J(S)$ so
there is $x\in S$ such that $\pi_J(x)=y$, and on the other hand
$y\in\pi_H^{-1}(A_i(U_0))$ so $\pi_H(x)=\pi_H(\pi_J(x))=\pi_H(y)\in A_i(U_0)$. Thus
$x\in\pi_H^{-1}(A_i(U_0))\cap S=A_i(S)$, and since $y=\pi_J(x)$ this proves
that $A_i(T)\subseteq\pi_J(A_i(S))$. This proves our claim in this case.

Now assume that $T<U_0$. Then $J\subset H$ hence
$\pi_J(A_i(S))=\pi_J(\pi_H(A_i(S))$. We already know that
$\pi_H(A_i(S))=A_i(U_0)$ be the previous case, and that $\partial A_i(U_0)=\partial
U_0$ by construction. In particular $F_J(A_i(U_0))=F_J(U_0)$. But
$F_J(U_0)=T$ since $\cA$ is a complex and $T<U_0$. Altogether, using
Proposition~\ref{pr:face-egale-proj}(\ref{it:face-proj}) for $A_i(S)$
and $A_i(U_0)$ we get
\begin{displaymath}
  F_J(A_i(S))=\pi_J(A_i(S))
  =\pi_J(A_i(U_0))=F_J(A_i(U_0))=T.
\end{displaymath}
\end{proof}


\bibliographystyle{alpha}

\end{document}